\documentclass[12pt]{article}

\usepackage{amsmath, amssymb, amsthm, physics, url, mathrsfs, cite, xcolor, mathtools, dsfont, enumitem, esint}
\mathtoolsset{showonlyrefs}
\usepackage[utf8]{inputenc}
\allowdisplaybreaks[4]

\definecolor{darkgreen}{rgb}{0,0.75,0}
\definecolor{darkred}{rgb}{0.75,0,0}
\definecolor{darkmagenta}{rgb}{0.5,0,0.5}
\usepackage[colorlinks,
	citecolor=darkgreen, 
	linkcolor=darkred, 
	urlcolor=darkmagenta,
	bookmarks=false,
	pdfauthor={Aobo Chen and Mathav Murugan},
	pdftitle={Bounding martingale dimension by spectral dimension}
]{hyperref}

\numberwithin{equation}{section}
\numberwithin{figure}{section}

\newtheorem{theorem}{Theorem}[section]
\newtheorem{lemma}[theorem]{Lemma}
\newtheorem{proposition}[theorem]{Proposition}

\theoremstyle{definition}
\newtheorem{definition}[theorem]{Definition}

\newtheorem{notation}[theorem]{Notation}

\DeclareMathOperator*{\esssup}{\operatorname*{ess\,sup}}

\def\dist{{\mathop{{\rm dist}}}}
\def\diam{{\mathop{{\rm diam }}}}
\newcommand*{\dif}{\mathop{}\!\mathrm{d}}
\newcommand{\supp}{\operatorname{supp}}
\newcommand{\one}{\mathds{1}}

\def\sG {{\mathcal G}}  
  \def\sL {{\mathcal L}}
\def\sM {{\mathcal M}}

 \def\bN {{\mathbb N}} 
  \def\bR {{\mathbb R}}

 \def\bZ {{\mathbb Z}}

\newcommand{\loc}[0]{\operatorname{loc}}
\newcommand{\Sett}[2]{\left\{ #1  : \, #2 \right\}}

\newcommand\restr[2]{{
		\left.\kern-\nulldelimiterspace 
		#1 
		\vphantom{\big|}
		\right|_{#2}
}}

\newcommand{\ambient}{X}
\newcommand{\metric}{d}
\newcommand{\meas}{m}
\DeclareMathOperator*{\osc}{osc}

\newcommand{\form}{\mathcal{E}} 
\newcommand{\domain}{\mathcal{F}}
\newcommand{\gen}{\Delta}
\newcommand{\Dom}{\mathrm{Dom}}
\newcommand{\MD}{\hyperlink{MD}{\mathrm{MD}}}
\newcommand{\VD}{\hyperlink{VD}{\mathrm{VD}}}
\newcommand{\PI}[1]{\hyperlink{PI}{\mathrm{PI}( #1 )}}
\newcommand{\Capa}[1]{\hyperlink{Cap}{\mathrm{Cap}_{\le}( #1 )}}
\newcommand{\Capacity}{\mathrm{Cap}}

\font\titlefont=cmbx14 scaled\magstep1
\title{\titlefont Bounding martingale dimension by spectral dimension}

\date{\today}
\author{
Aobo Chen\footnote{Research partially supported by the research grant (VIL73729) from Villum Fonden.}
\ and 
Mathav Murugan\footnote{Research partially supported by NSERC   discovery grant (RGPIN-2024-06125) and the Canada research chairs program (CRC-2023-00048)}
}

\begin{document}

\maketitle
\vspace{-0.5cm}
\begin{abstract}
We prove that the martingale dimension of a broad class of strongly local regular Dirichlet forms is bounded above by	a small-scale version of spectral dimension. This resolves a conjecture of Hino for this class of spaces, extending his earlier results beyond self-similar fractals. Our proof uses differentiation of energy measures to select good blow-up scales and construct approximately harmonic coordinates. We then study weak limits of their Euclidean image measures and combine an integration-by-parts argument with uniform Sobolev inequalities to establish the dimension bound.

\medskip\noindent
{\it Keywords:} martingale dimension, Dirichlet form, energy measure, blow-up analysis, Poincar\'e inequality, Sobolev inequality.

\medskip\noindent
{\it Mathematics Subject Classification (2020):} 60G44, 60J60, 31C25, 31E05
\end{abstract}

\section{Introduction}\label{s.intro}

The study of diffusion processes on fractals has revealed phenomena that are absent in classical Brownian motion on Euclidean spaces. Following the pioneering work of Barlow and
Perkins on the Sierpi\'nski gasket \cite{BP88}, sub-Gaussian heat kernel estimates have been established for many fractal diffusions, including those on nested and affine nested
fractals \cite{Kum93,FHK94}, Sierpi\'nski carpets \cite{BB99}, and certain random Sierpi\'nski carpets \cite{HKKZ00}. See also \cite{HK99,Bar98} for further results on fractal diffusions. More flexible constructions with prescribed
volume growth and escape-time exponents were obtained by Barlow \cite{Bar04} for random walks on graphs, and subsequently by the second-named author \cite{Mur25}
for diffusions and random walks with general volume growth and escape-time profiles. They often exhibit anomalous diffusion, with space-time scaling different from
that of Euclidean Brownian motion. One manifestation of this phenomenon is  the appearance of sub-Gaussian heat kernel
estimates.

To describe this phenomenon, consider first a
space whose reference measure $m$ has
homogeneous volume growth
\[
m(B(x,r))\asymp r^\alpha,
\]
where $\alpha>0$ is called the
\emph{volume growth exponent}.
For many natural diffusions on such spaces,
the expected time needed to exit a ball of
radius $r$, when started at its center, is
comparable to $r^\beta$.
The exponent $\beta$, called the
\emph{walk dimension}, describes the
characteristic space-time scaling of the
diffusion. For Brownian motion on Euclidean
space, $\beta=2$, whereas on many fractals
one has $\beta>2$, reflecting the slower
spread of the diffusion.

This behavior is reflected in the
sub-Gaussian heat kernel estimates, in which
the heat kernel $p_t(x,y)$ is bounded above
and below by expressions of the form
\[
\frac{1}{t^{\alpha/\beta}}
\exp\left(
-c\left(\frac{d(x,y)^\beta}{t}
\right)^{1/(\beta-1)}
\right),
\]
with possibly different positive constants
in the upper and lower bounds.
The case $\beta=2$ recovers the familiar
Gaussian estimates. In particular, the
on-diagonal behavior is governed by
\[
p_t(x,x)\asymp t^{-\alpha/\beta}.
\]

The quantity
\[
d_{\mathrm s}=\frac{2\alpha}{\beta}
\]
is called the \emph{spectral dimension}. The terminology comes from the spectral behavior of the generator of the diffusion;
see \cite[Section~4.1]{Kig01} and \cite{BK97}.
For example, on a compact space with the above heat kernel estimates, if $N(\lambda)$ denotes the number of
eigenvalues of the nonnegative Laplacian not exceeding $\lambda$, counted with multiplicity, then
\[
N(\lambda)\asymp\lambda^{d_{\mathrm s}/2}
\qquad\text{as }\lambda\to\infty.
\]
Thus the spectral dimension plays the same role in eigenvalue growth that the Euclidean dimension plays in Weyl's law.
Equivalently, it governs the short-time growth of the heat trace.

For Brownian motion on $\mathbb R^n$, the volume growth exponent, walk dimension, and spectral dimension are respectively $n$, $2$, and $n$. For the standard Sierpi\'nski gasket \cite{BP88},  these exponents are
\[
\alpha=\frac{\log3}{\log2},\qquad
\beta=\frac{\log5}{\log2},\qquad
d_{\mathrm s}=\frac{2\log3}{\log5}.
\]
More generally, these exponents provide different descriptions of the geometry and diffusion behavior of the underlying
space. The volume growth exponent describes the size of metric balls, the walk dimension describes how quickly the diffusion moves, and the spectral dimension combines these
two aspects.

A further notion of dimension, which is the main subject of this paper, is the \emph{martingale dimension}. Informally, it measures the number of independent sources of martingale noise needed to describe a diffusion process. More precisely, it is the smallest number of martingale additive functionals with  respect to which every finite-energy martingale additive functional can be represented by stochastic integration. For Brownian motion on $\mathbb R^n$, this number is $n$, corresponding to the $n$ coordinate Brownian motions. In contrast, the natural Brownian motion on the Sierpi\'nski gasket has martingale dimension one, even though the gasket has Hausdorff dimension greater than one \cite{Kus89,Hin10}. Thus martingale dimension captures a different aspect of diffusion that is not directly determined by the Hausdorff dimension of the underlying space or the walk dimension. A fundamental question is how martingale dimension is constrained by the heat kernel behavior of the diffusion. Hino \cite{Hin13} proved that martingale dimension is bounded above by spectral dimension for certain diffusions on self-similar fractals and conjectured that such a bound should hold in much greater generality. The main result of this paper establishes this bound for a broad class of metric measure Dirichlet  spaces without assuming self-similarity.

Our result also addresses one direction of a more general problem concerning the possible combinations of these dimensions. In \cite[Conjecture~5.18]{Mur25}, the second-named author conjectured that for any   $2\le\beta\le\alpha+1$,
and integer $d_{\mathrm m}$ satisfying
\[
1\le d_{\mathrm m}\le\frac{2\alpha}{\beta},
\]
there exists a symmetric diffusion on a geodesic metric measure space satisfying two-sided sub-Gaussian heat kernel estimates with volume growth exponent $\alpha$, walk dimension $\beta$, and martingale dimension $d_{\mathrm m}$. The present work establishes the necessity of the upper bound
$d_{\mathrm m}\le2\alpha/\beta$. The complementary problem of constructing diffusions realizing every admissible triple $(\alpha,\beta,d_{\mathrm m})$ remains open in general.

We now introduce the analytic framework and formulate our main result precisely.

\subsection{Dirichlet forms and their energy measures}

We recall the basic definitions and notation concerning symmetric Dirichlet forms and their energy measures. We refer to \cite{FOT11} for a comprehensive treatment of Dirichlet forms.

Let $(\ambient,\metric)$ be a proper separable metric space containing at least two points, and let $\meas$ be a Radon measure on $\ambient$ with full support. Thus $\meas$ is finite on compact sets and $\meas(U)>0$ for every nonempty open set $U\subset \ambient$. We call $(\ambient,\metric,\meas)$ a \emph{metric measure space}. For $x\in \ambient$ and $r>0$, write
\[B(x,r)=\{y\in \ambient:d(x,y)<r\}, \qquad V(x,r)=\meas(B(x,r)).\]
We also write $\diam(E,\metric)=\sup_{x,y\in E}d(x,y)$ for any subset $E\subset \ambient$.

Furthermore let $(\form,\domain)$ be a \emph{symmetric Dirichlet form} on $L^{2}(\ambient,\meas)$; by definition, $\domain$ is a dense linear subspace of $L^{2}(\ambient,\meas)$, and $\form:\domain\times\domain\to\mathbb{R}$ is a non-negative definite symmetric bilinear form which is \emph{closed} ($\domain$ is a Hilbert space under the inner product $\form_{1}:= \form+ \langle \cdot,\cdot \rangle_{L^{2}(\ambient,\meas)}$) and \emph{Markovian} ($f^{+}\wedge 1\in\domain$ and $\form(f^{+}\wedge 1,f^{+}\wedge 1)\leq \form(f,f)$ for any $f\in\domain$). Recall that $(\form,\domain)$ is called \emph{regular} if $\domain\cap\mathcal{C}_{\mathrm{c}}(\ambient)$ is dense both in $(\domain,\form_{1})$ and in $(\mathcal{C}_{\mathrm{c}}(\ambient),\|\cdot\|_{\mathrm{sup}})$, and that $(\form,\domain)$ is called \emph{strongly local} if $\form(f,g)=0$ for any $f,g\in\domain$ with $\supp_{m}[f]$, $\supp_{m}[g]$ compact and $\supp_{m}[f-a\one_{\ambient}]\cap\supp_{m}[g]=\emptyset$ for some $a\in\mathbb{R}$. Here for a Borel measurable function $f:\ambient\to[-\infty,\infty]$ or an $\meas$-equivalence class $f$ of such functions, $\supp_{m}[f]$ denotes the support of the measure $|f|\dif \meas$, i.e., the smallest closed subset $F$ of $\ambient$ with $\int_{\ambient\setminus F}|f|\dif \meas=0$, which exists since $\ambient$ has a countable open base for its topology; note that $\supp_{m}[f]$ coincides with the closure of $\ambient\setminus f^{-1}(\{0\})$ in $\ambient$ if $f$ is continuous. The pair $(\ambient,\metric,\meas,\form,\domain)$ of a metric measure space $(\ambient,\metric,\meas)$ and a strongly local, regular symmetric Dirichlet form $(\form,\domain)$ on $L^{2}(\ambient,\meas)$ is termed a \emph{metric measure Dirichlet space}, or a \emph{MMD space} in abbreviation. We refer to \cite{FOT11, BH91} for details of the theory of symmetric Dirichlet forms.

Associated with a symmetric Dirichlet form is a \emph{strongly continuous contraction semigroup} $(P_t)_{t > 0}$; that is, a family of symmetric bounded linear operators $P_t:L^2(\ambient,\meas) \to L^2(\ambient,\meas)$ such that
  \begin{equation*}
  	P_{t+s}f=P_t(P_s f), \quad \norm{P_t f}_2 \le \norm{f}_{L^{2}(\ambient,\meas)}, \quad \lim_{t \downarrow 0} \norm{P_t f-f}_{L^{2}(\ambient,\meas)} =0, 
  \end{equation*}
  for all $t,s\in(0,\infty)$ and all $f \in L^2(\ambient,\meas)$. In this case, as stated in \cite[Lemma 1.3.4-(i)]{FOT11} we can express $(\form,\domain)$ in terms of the semigroup as
  \begin{equation} \label{e:semigroup}
  	\begin{split}
  		\domain &= \biggl\{f \in L^{2}(\ambient,\meas) \biggm| \lim_{t \downarrow 0} \frac{1}{t}\langle f - P_{t} f, f \rangle_{L^{2}(\ambient,\meas)} < \infty \biggr\}, \\
  		\form&(f,f) = \lim_{t \downarrow 0} \frac{1}{t}\langle f - P_{t} f, f \rangle_{L^{2}(\ambient,\meas)} \qquad \textrm{for all $f \in \domain$.}
  	\end{split}
  \end{equation}
The associated nonpositive self-adjoint generator $(\gen,\Dom(\gen))$ is defined by
\[\Dom(\gen):=\Sett{u\in L^2(\ambient,\meas)}{\lim_{t\downarrow0}\frac{P_tu-u}{t}\text{ exists in }L^2(\ambient,\meas)},\]
and
\[\gen u:=\lim_{t\downarrow0}\frac{P_tu-u}{t},\qquad u\in\Dom(\gen).\]

This nonpositive self-adjoint generator $(\gen,\Dom(\gen))$ is characterized by
\[\Dom(\gen)=
\Sett{u\in\domain}{\begin{array}{l}
	\text{there exists }f\in L^2(\ambient,\meas)\text{ such that}\\
	\form(u,v)=-\langle f,v\rangle_{L^2(\ambient,\meas)}
	\text{ for every }v\in\domain
\end{array}}.\]
For $u\in\Dom(\gen)$, the function
$f$ is uniquely determined, and we set $\gen u=f$.
Equivalently,
\begin{equation}\label{e.generator-def}
	\form(u,v)
	=-\langle\gen u,v\rangle_{L^2(\ambient,\meas)},
	\ \text{ for all }\ v\in\domain.
\end{equation}
See \cite[Section~1.3]{FOT11}.

For $\alpha\in(0,\infty)$, the \emph{resolvent} associated with the semigroup is given by
\[R_\alpha=(\alpha-\gen)^{-1}=\int_0^\infty e^{-\alpha t}P_t\dif t,\]
where the integral converges strongly on $L^2(\ambient,\meas)$. Moreover, $R_\alpha$ maps $L^2(\ambient,\meas)$ onto $\Dom(\gen)$ and satisfies $(\alpha-\gen)R_\alpha f=f$ \cite[(1.3.1)--(1.3.3)]{FOT11}. By the Markov property, $\|\alpha R_\alpha f\|_\infty\le\|f\|_\infty$ for $f\in L^2(\ambient,\meas)\cap L^\infty(\ambient,\meas)$ \cite[Theorem~1.4.1]{FOT11}.
 
 A central role is played by the energy measures associated with a MMD space, which are defined as follows. Note that $fg\in\domain$ for any $f,g\in\domain\cap L^{\infty}(\ambient,\meas)$ by \cite[Theorem 1.4.2-(ii)]{FOT11} and that $\{(-n)\vee(f\wedge n)\}_{n=1}^{\infty}\subset\domain$ and $\lim_{n\to\infty}(-n)\vee(f\wedge n)=f$ in norm in $(\domain,\form_{1})$ by \cite[Theorem 1.4.2-(iii)]{FOT11}.
 
\begin{definition}[{\cite[(3.2.13), (3.2.14) and (3.2.15)]{FOT11}}]\label{d:EnergyMeas}
 Let $(\ambient,\metric,\meas,\form,\domain)$ be a MMD space. The \emph{energy measure} $\Gamma\langle f,f\rangle$ of $f\in\domain$ associated with $(\ambient,\metric,\meas,\form,\domain)$ is defined, first for $f\in\domain\cap L^{\infty}(\ambient,\meas)$ as the unique ($[0,\infty]$-valued) Borel measure on $\ambient$ such that 
 \begin{equation}\label{e:EnergyMeas}
 		\int_{\ambient} g \dif\Gamma\langle f,f\rangle = \form(f,fg)-\frac{1}{2}\form(f^{2},g) \qquad \textrm{for all $g \in \domain\cap\mathcal{C}_{\mathrm{c}}(\ambient)$,}
 \end{equation}
 and then by $\Gamma\langle f,f\rangle(A):=\lim_{n\to\infty}\Gamma\bigl((-n)\vee(f\wedge n),(-n)\vee(f\wedge n)\bigr)(A)$ for each Borel subset $A$ of $\ambient$ for general $f\in\domain$. We also define the \emph{mutual energy measure} $\Gamma\langle f,g\rangle$ of $f,g\in\domain$ as the Borel signed measure on $\ambient$ given by $\Gamma\langle f,g\rangle:=\frac{1}{2}\bigl(\Gamma\langle f+g,f+g\rangle-\Gamma\langle f,f\rangle-\Gamma\langle g,g\rangle\bigr)$, so that $\Gamma\langle\mspace{-0.8mu}\cdot\mspace{-0.8mu},\mspace{-0.8mu}\cdot\mspace{-0.8mu}\rangle$ is bilinear and symmetric and satisfies the Cauchy--Schwarz inequality:
 	\begin{align}\label{e:bilinear-energy}
 		\Gamma\langle&af+bg,af+bg\rangle=a^{2}\Gamma\langle f,f\rangle+2ab\Gamma\langle f,g\rangle+b^{2}\Gamma\langle g,g\rangle, \qquad a,b \in \mathbb{R},\\
 		|\Gamma&\langle f,g\rangle(B)|^{2} \leq \Gamma\langle f,f\rangle(B)\Gamma\langle g,g\rangle(B) \qquad \textrm{for all Borel subsets $B$ of $\ambient$.}
 		\label{e:Cauchy-Schwarz-energy}
 	\end{align}
 	Note that by \cite[Lemma 3.2.3]{FOT11} and the strong locality of $(\form,\domain)$,
 	\begin{equation}\label{e:EnergyMeasTotalMass}
 		\Gamma\langle f,g\rangle(\ambient)=\form(f,g) \quad \textrm{for all $f,g \in \domain$.}
 	\end{equation}
 \end{definition}

 \begin{definition} \label{d:domain-local}
 Assume that $(\ambient,\meas,\form,\domain)$ is strongly local, and let $D$ be an open subset of $\ambient$. We define the \emph{space $\domain_{\loc}(D)$ of functions on $D$ locally in $\domain$} as
 	\begin{equation}\label{e:Floc}
 		\domain_{\loc}(D) := \Biggl\{ f \Biggm|
 		\begin{minipage}{285pt}
 			$f$ is an $\meas$-equivalence class of $\mathbb{R}$-valued Borel measurable functions on $D$ such that $f = f^{\#}$ $\meas$-a.e.\ on $V$ for some $f^{\#}\in\domain$ for each relatively compact open subset $V$ of $D$
 		\end{minipage}
 		\Biggr\},
 	\end{equation}
 	and define the \emph{$\form$-energy measure} $\Gamma_{D}(f,f)$ of $f \in \domain_{\loc}(D)$ as the unique Radon measure on $D$ such that $\Gamma_{D}\langle f,f\rangle (A)=\Gamma\langle f^{\#},f^{\#}\rangle (A)$ for any relatively compact Borel subset $A$ of $D$ and any $V,f^{\#}$ as in \eqref{e:Floc} with $A\subset V$; note that $\Gamma(f^{\#},f^{\#})(A)$ is independent of a particular choice of such $V,f^{\#}$ by \cite[Corollary 3.2.1]{FOT11}. We abbreviate $\Gamma\langle f,f\rangle$ as $\Gamma\langle f\rangle$.
 \end{definition}

 We recall the notions of capacity and quasi-continuity associated with a regular Dirichlet form; see \cite[Section~2.1]{FOT11}. For an open set $U\subset \ambient$, define
 \[\Capacity_1(U):=\inf\Sett{\form_1(u,u)}{u\in\domain,\ u\ge1\quad m\text{-a.e.\ on }U}.\]
 For an arbitrary set $A\subset \ambient$, let
 \[\Capacity_1(A):=\inf\Sett{\Capacity_1(U)}{A\subset U,\ U\text{ open}}.\]
 A property holds \emph{$\form$-quasi-everywhere} ($\form$-q.e.) if it holds outside a set of zero $\Capacity_1$-capacity.
 
 A function $u$ is called \emph{$\form$-quasi-continuous} if, for every $\varepsilon>0$, there exists an open set $G\subset \ambient$ with $\Capacity_1(G)<\varepsilon$ such that $u|_{\ambient\setminus G}$ is finite and continuous. Every $u\in\domain$ admits a $\form$-quasi-continuous representative $\widetilde u$, unique up to a set of zero capacity; see \cite[Theorem~2.1.3 and Lemma~2.1.4]{FOT11}.
 
 Strong locality implies the Leibniz and chain rules for energy measures; see \cite[Lemma~3.2.5 and Theorem~3.2.2, and the discussion following Theorem~3.2.2]{FOT11}. We recall these identities below. Let $D\subset \ambient$ be open, and let $f,g,h\in\domain_{\loc}(D)\cap L^\infty_{\loc}(D,\meas)$. Then $fg\in\domain_{\loc}(D)$, and
 \begin{equation}\label{e.energy-Leibniz}
 	\dif\Gamma_D\langle fg,h\rangle=\widetilde f\dif \Gamma_D\langle g,h\rangle+\widetilde g\dif \Gamma_D \langle f,h\rangle,
 \end{equation}
 where $\widetilde f$ and $\widetilde g$ denote $\form$-quasi-continuous representatives.
 
 More generally, let $u_1,\ldots,u_n\in\domain_{\loc}(D)\cap L^\infty_{\loc}(D,m)$, and let $\Phi,\Theta\in C^1(\mathbb R^n)$. Then $\Phi(\mathbf u),\Theta(\mathbf u)\in\domain_{\loc}(D)$, where $\mathbf u=(u_1,\ldots,u_n)$, and
 \begin{equation}\label{e.energy-chain}
 	\dif\Gamma_D\bigl\langle\Phi(\mathbf u),\Theta(\mathbf u)\bigr\rangle
 	=
 	\sum_{i,k=1}^n(\partial_i\Phi)(\widetilde{\mathbf u})(\partial_k\Theta)(\widetilde{\mathbf u})\dif \Gamma_D\langle u_i,u_k\rangle.
 \end{equation}
 Here $\widetilde{\mathbf u}$ denotes the vector of $\form$-quasi-continuous representatives. The use of these representatives is justified  by the fact that energy measures do not charge sets of zero capacity \cite[Lemma~3.2.4]{FOT11}. In particular, when the functions $u_i$ are continuous on $D$, the $\form$-quasi-continuous representatives may be replaced by the functions themselves in \eqref{e.energy-Leibniz} and \eqref{e.energy-chain}.

\subsection{Martingale dimension}
We recall the definition of martingale dimension through the index of the Dirichlet form, following  \cite[Section~2]{Hin10}.

A $\sigma$-finite Borel measure $\nu$ on $\ambient$ is called an \emph{energy-dominant measure} if $\Gamma\langle u\rangle\ll\nu$ for all $u\in\domain$. It is called a \emph{minimal energy-dominant measure} if, in addition, $\nu\ll\mu$ for every other energy-dominant $\sigma$-finite Borel measure $\mu$. Such a measure always exists and is unique up to mutual absolute continuity \cite[Lemmas~2.2--2.4]{Hin10}.

Fix a minimal energy-dominant measure $\nu$. For $n\in\mathbb N$ and $u_1,\ldots,u_n\in\domain$, consider the symmetric matrix
\[A_{\mathbf u}(x)=\left(\frac{\dif\Gamma\langle u_i,u_k\rangle}{\dif\nu}(x)\right)_{1\le i,k\le n}.\]
This matrix is positive semidefinite for $\nu$-a.e. $x\in \ambient$.

The \emph{pointwise index} of the Dirichlet form is the smallest, in the $\nu$-almost everywhere sense, measurable function \[p:\ambient\to\mathbb N\cup\{0,\infty\}\] such that, for every $n\in\mathbb N$ and every $u_1,\ldots,u_n\in\domain$, \[\operatorname{rank}A_{\mathbf u}(x)\le p(x)\quad\text{for $\nu$-a.e. }x\in \ambient.\]
By \cite[Proposition~2.10]{Hin10}, if $\{u_k\}_{k\in\mathbb N}$ is a countable family whose linear span is dense in $(\domain,\form_1^{1/2})$, then
\[p(x)=\sup_{n\in\mathbb N}\operatorname{rank}\left(\frac{\dif\Gamma\langle u_i,u_k\rangle}{\dif\nu}(x)\right)_{1\le i,k\le n}\quad\text{for $\nu$-a.e. }x\in \ambient.\]

The \emph{index} of the Dirichlet form is defined by \[\operatorname{index}(\form,\domain)=\operatorname*{\nu-ess\,sup}_{x\in \ambient} p(x).\] This definition is independent of the choice of minimal energy-dominant measure. By \cite[Theorem~3.4]{Hin10}, the index coincides with the probabilistic martingale dimension of the associated symmetric diffusion. We therefore define \emph{martingale dimension} 
\begin{equation}\label{e.martingale-dimension}
	d_{\mathrm m}:=\operatorname*{\nu-ess\,sup}_{x\in \ambient}p(x).
\end{equation}
In particular, for every integer $1\le n\le d_{\mathrm m}$, there exist $u_1,\ldots,u_n\in\domain$ such that
\[\nu\left(\left\{x\in \ambient:\operatorname{rank}\left(\frac{\dif\Gamma\langle u_i,u_k\rangle}{\dif\nu}(x)\right)_{1\le i,k\le n}=n\right\}\right)>0.\]
Moreover, these functions may be selected from any linear subspace of $\domain$ that is dense with respect to $\form_1^{1/2}$; see \cite[Proposition~2.10]{Hin10}. This characterization will be used in Lemma~\ref{l.core} to select functions whose energy-measure matrix has full rank on a set of positive measure.

\subsection{Volume doubling, scale functions, and analytic assumptions}

We recall the geometric and analytic conditions used throughout this paper.  
\begin{definition}\label{d.VD}
Let $(\ambient,\metric,\meas)$ be a metric measure space. For $x\in \ambient$ and $r\in(0,\infty)$, write
	\[B(x,r)=\Sett{y\in \ambient}{d(x,y)<r},\qquad V(x,r)=\meas(B(x,r)).\]
We say that $(\ambient,\metric,\meas)$ satisfies the \hypertarget{VD}{\emph{volume doubling property}}, denoted by $\VD$, if there exists $C_{\mathrm{VD}}\ge1$ such that
\begin{equation}\label{e.VD}
	0<V(x,2r)\le C_{\mathrm{VD}}V(x,r)<\infty,\ \text{ for every $x\in \ambient$ and  $r>0$}
\end{equation}
\end{definition}
By iterating \eqref{e.VD}, there exist constants $C\ge1$ and $\alpha>0$, depending only on $C_{\mathrm{VD}}$, such that
	\begin{equation}\label{e.VD-polynomial}
		\frac{V(x,R)}{V(x,r)}\le C\left(\frac Rr\right)^\alpha,\quad\text{ for every $x\in \ambient$ and } 0<r\le R<\infty.
	\end{equation}
\begin{definition}[Scale functions]\label{d.scale}
	A \emph{scale function} is a continuous increasing bijection $\Psi:[0,\infty)\to[0,\infty)$ such that there exist constants $C_\Psi\ge1$ and $1<\beta\le\beta'<\infty$ satisfying
	\begin{equation}\label{e.psibeta}
		C_\Psi^{-1}\left(\frac Rr\right)^\beta \le \frac{\Psi(R)}{\Psi(r)} \le C_\Psi\left(\frac Rr\right)^{\beta'},\qquad 0<r\le R<\infty.
	\end{equation}
	We refer to \eqref{e.psibeta} as the \emph{weak scaling condition} for $\Psi$.
\end{definition}
	
In particular, if $\Psi$ is a scale function, then $\Psi(0)=0$ and $\Psi(r)>0$ for every $r>0$. The scale function describes the characteristic time scale associated with a spatial distance $r$. The standard example is $\Psi(r)=r^\beta$, in which case \eqref{e.psibeta} holds with $C_\Psi=1$ and $\beta'=\beta$. The case $\Psi(r)=r^2$ corresponds to the usual diffusive scaling, whereas larger exponents arise naturally in sub-Gaussian settings.
\begin{definition}[Poincar\'e inequality]\label{d.PI}
	Let $(\ambient,\metric,\meas,\form,\domain)$ be a MMD space, and let $\Psi$ be a scale function. We say that the space satisfies the \hypertarget{PI}{\emph{Poincar\'e inequality at scale $\Psi$}}, denoted by $\PI{\Psi}$, if there exist constants $C_{\mathrm{PI}}>0$ and $\Lambda_{\mathrm{PI}}\ge1$ such that
	\begin{equation}\label{e.PI}
		\int_{B(x,r)}|u-u_{B(x,r)}|^2\dif \meas\le C_{\mathrm{PI}}\Psi(r)\Gamma\langle u\rangle(B(x,\Lambda_{\mathrm{PI}}r)).
	\end{equation}
	for every ball $B(x,r)$ and every $u\in\domain_{\loc}(B(x,\Lambda_{\mathrm{PI}}r))$, where \[u_{B(x,r)}:=\frac{1}{V(x,r)}\int_{B(x,r)}u\dif \meas.\]
	The constant $\Lambda_{\mathrm{PI}}$ is called the \emph{dilation constant} of the Poincar\'e inequality.
\end{definition}

\begin{definition}[Capacity upper bound]
	\label{d.Cap-upper}
	Let $(\ambient,\metric,\meas,\form,\domain)$ be a MMD space, and let $\Psi$ be a scale function. We say that the space satisfies the \hypertarget{Cap}{\emph{capacity upper bound at scale $\Psi$}}, denoted by $\Capa{\Psi}$, if there exists a constant $C_{\mathrm{Cap}}>0$ such that, for every ball $B(x,r)$, there exists $\varphi\in\domain\cap C_c(\ambient)$ satisfying
	\begin{equation}\label{e.Cap-upper}
	0\le\varphi\le1,\quad \varphi=1\ \text{on }B(x,r),\quad \supp\varphi \subset B(x,2r), \ \text{ and }\	\form(\varphi)\le C_{\mathrm{Cap}}\frac{V(x,r)}{\Psi(r)}.
	\end{equation}
\end{definition} 

Let $(\ambient,\metric,\meas)$ be a metric measure space, and let $\Psi:[0,\infty)\to[0,\infty)$ be a scale function. For $x\in\ambient$, $r\in(0,\infty)$, $\lambda\in(0,1)$, set 
\[D(x,r,\lambda):=\frac{2\log(V(x,r)/V(x,\lambda r))}{\log(\Psi(r)/\Psi(\lambda r))}.\]
Define
\begin{equation}\label{e.ds-unif}
d_{\mathrm s}:=\liminf_{\lambda\downarrow0}\limsup_{r\downarrow0}\sup_{x\in \ambient}D(x,r,\lambda).
\end{equation}

The following is the main result of this paper.
\begin{theorem}\label{t.main}
Let $(\ambient,\metric,\meas,\form,\domain)$ be a metric measure Dirichlet space satisfying $\VD$, $\PI{\Psi}$, and $\Capa{\Psi}$ for a scale function satisfying \eqref{e.psibeta}. Then its martingale dimension satisfies
\begin{equation}\label{e.main-statement}
1\le d_{\mathrm m}\le d_{\mathrm s}<\infty.
\end{equation}
\end{theorem}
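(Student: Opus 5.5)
The proof splits into three claims: $d_{\mathrm m}\ge 1$, $d_{\mathrm s}<\infty$, and the main inequality $d_{\mathrm m}\le d_{\mathrm s}$.

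\emph{The two easy bounds.} For $d_{\mathrm s}<\infty$, combine \eqref{e.VD-polynomial} with the lower half of \eqref{e.psibeta}. For $\lambda$ small this gives
\[
D(x,r,\lambda)\le \frac{2(\log C+\alpha\log(1/\lambda))}{\beta\log(1/\lambda)-\log C_\Psi},
\]
and the right-hand side tends to $2\alpha/\beta$ as $\lambda\downarrow0$. For $d_{\mathrm m}\ge1$, it suffices to produce one $u\in\domain$ with $\form(u)>0$, since then $\Gamma\langle u\rangle\neq0$ and $p\ge1$ on a set of positive $\nu$-measure. Take the cutoff $\varphi$ from $\Capa{\Psi}$. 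It is nonconstant because $X$ has at least two points and, by $\VD$, is unbounded or can be handled with small balls. If $\form(\varphi)=0$, then $\Gamma\langle\varphi\rangle=0$, and $\PI{\Psi}$ on a ball meeting both $\{\varphi=1\}$ and $\{\varphi<1\}$ forces $\varphi$ to be constant there, a contradiction.

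\emph{Blow-up setup for the upper bound.} Suppose $d_{\mathrm m}\ge n$. By the remark after \eqref{e.martingale-dimension}, choose $u_1,\dots,u_n$ in a convenient core (continuous functions in $\domain\cap C_c$) whose matrix $A_{\mathbf u}$ has rank $n$ on a set $E$ with $\nu(E)>0$. The goal is $n\le D(x,r,\lambda)+o(1)$ along suitable $x,r$, for fixed small $\lambda$. Differentiation of measures with respect to $\nu$ (in a doubling-type sense, or Besicovitch-free via Lebesgue points along balls for the energy measure $\Gamma:=\sum_i\Gamma\langle u_i\rangle$) selects a density point $x_0\in E$. At such a point $A_{\mathbf u}$ is approximately constant and nondegenerate on $B(x_0,r)$ for most small $r$. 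After a linear change of coordinates we may assume $\dif\Gamma\langle u_i,u_k\rangle\approx\delta_{ik}\,\dif\Gamma$ on $B(x_0,r)$ in total variation relative to $\Gamma(B(x_0,r))$.

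Next, replace $u_i$ by its harmonic replacement $h_i$ on $B(x_0,r)$. The replacement error is controlled by the energy oscillation, which is small at good scales. Normalize $v_i=(h_i-h_i(x_0))/s$, where $s^2=\Psi(r)\Gamma(B(x_0,r))/V(x_0,r)$. By the Poincar\'e inequality and elliptic Harnack/Hölder estimates (which follow from $\VD+\PI{\Psi}+\Capa{\Psi}$), the map $\mathbf v=(v_1,\dots,v_n):B(x_0,r)\to\bR^n$ is bounded uniformly.

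\emph{Dimension count.} Consider the image measures $\mathbf v_*(\Gamma|_{B(x_0,\lambda' r)})$, normalized, and pass to weak limits along good scales. Integration by parts, $\int\Gamma\langle \Phi(\mathbf v),\psi\rangle=0$ for harmonic coordinates together with the chain rule \eqref{e.energy-chain}, shows that the limiting Euclidean measure $\mu$ has an identity-matrix energy structure. A Sobolev inequality on the ball, uniform in scale, derived from $\VD$ and $\PI{\Psi}$ with exponent governed by $D(x_0,r,\lambda)$, transfers through the test functions $\Phi(\mathbf v)$ for $\Phi\in C_c^1(\bR^n)$. It yields a Sobolev inequality for $\mu$ on $\bR^n$ of the form
\[
\|\Phi\|_{L^{2q}(\mu_0)}\lesssim \|\nabla\Phi\|_{L^2(\mu)}
\]
with $q$ tied to $d_{\mathrm s}$, where $\mu_0$ is the pushforward of $m$. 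Testing with bumps concentrated at scale $\rho$ around points of $\supp\mu$ then forces $n\le d_{\mathrm s}$, since a nondegenerate gradient structure in $n$ independent directions cannot satisfy a Sobolev inequality of dimension below $n$. Equivalently, one can use a capacity count: the $\rho$-neighbourhoods in $\bR^n$ need about $\rho^{-n}$ disjoint pieces, each costing energy $\gtrsim$ what the volume/$\Psi$ ratios allow.

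\emph{Main obstacle.} The hardest step is this last transfer: making the Sobolev exponent exactly match $D(x,r,\lambda)$ uniformly in $x$, and ensuring the limit measure keeps full rank rather than collapsing onto a lower-dimensional set. I would handle the collapse by the choice of good scales, where the energy measure is asymptotically doubling relative to $m$ in the ratio $V(x,r)/\Psi(r)$. I would obtain the exponent from an iterative Faber–Krahn/Nash argument restricted to the scale window $[\lambda r,r]$, with the $\liminf_{\lambda}$ absorbing the constants.
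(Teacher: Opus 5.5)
Your easy bounds ($d_{\mathrm s}<\infty$ and $d_{\mathrm m}\ge1$) are fine and match the paper, but the upper bound $d_{\mathrm m}\le d_{\mathrm s}$ has two genuine gaps.

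\textbf{First gap: asymptotic harmonicity.} Approximate constancy of the density matrix $A$ near $x_0$ says nothing about how far $u_i$ is from harmonic. For $n=1$ one has $A\equiv1$ identically, yet you still need $\form(u_1-h_1)=o(\Gamma\langle u_1\rangle(B))$. In $\mathbb R^n$ this would come from Lebesgue points of $\nabla u$ itself, which has no counterpart here. So your claim that ``the replacement error is controlled by the energy oscillation'' is unsupported for arbitrary $u_i\in\domain\cap C_c$.

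The paper supplies two extra ingredients. It takes $u_i$ from the resolvent core $(I-\gen)^{-1}(L^2\cap L^\infty)$, so that $\gen u_i\in L^\infty$. It also selects, via a maximal-function argument along Hyt\"onen's doubling scales, radii with $\meas(B(p,Kr_j))\le C\nu(B(p,r_j))$. After your normalization the Poisson data is then $O(\Psi(r_j)^{1/2})\to0$. These two ingredients would also rescue your harmonic replacement: $u-h=G_B(-\gen u)$ gives $\form(u-h)\lesssim\Psi(r)\,\meas(B)\,\|\gen u\|_\infty^2=o(\nu(B))$.

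\textbf{Second, larger gap: the dimension count.} To test an image Sobolev inequality with bumps at $a$, you need three things at the \emph{same} point:
\begin{enumerate}
\item an upper bound $\mu(B(a,\rho))\lesssim\rho^n$ for the energy image;
\item a lower bound $\mu_0(B(a,\rho))\gtrsim\rho^n$ for the image of $\meas$;
\item smallness $o(\rho^{n-2})$ of the lower-order measure coming from the cutoff energy and the $L^2$ term.
\end{enumerate}
Energy measures are typically singular with respect to $\meas$ in the sub-Gaussian regime, so points of $\supp\mu$ tell you nothing about the density of $\mu_0$.

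Moreover, integration by parts does not give an ``identity-matrix structure'' for $\mu$. With cutoffs $\zeta_j$ of uniformly bounded energy, and a matrix Cauchy--Schwarz inequality to control the boundary and approximate-scalar error terms, what it gives is $\nabla\kappa=\mathbf b\,\kappa$ with $\mathbf b\in L^2(\kappa)$. Hence $\kappa=\rho\,\mathcal L^n$ with $\sqrt\rho\in H^1$, which yields the upper density bound off an $H^1$-polar set. The lower-order measure, being finite, is $o(\rho^{n-2})$ off such a set as well.

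You must then show that the limit $\sigma$ of the images of $\meas$ charges no $H^1$-polar set. The paper does this with a pushforward Poincar\'e inequality, combined with a Caccioppoli argument (using asymptotic harmonicity and $\nu_j(B_j(p,1))=1$) showing that $\sigma$ is not a point mass. Only then do Hino's density lemmas produce a common good point.

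\textbf{Missing case.} Your $L^{2q}$ formulation does not cover $d_{\mathrm s}<2$. There one must rule out $n=2$ using an $L^\infty$-Sobolev inequality and logarithmic cutoffs in $\mathbb R^2$.

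Your Faber--Krahn/Nash route to uniform Sobolev inequalities is a reasonable alternative to the paper's heat-kernel upper bound plus volume-profile argument. However, it does not replace the missing steps above.
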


\subsection{Related work and historical background}

The study of martingale dimension through energy measures was pioneered by Kusuoka \cite{Kus89,Kus93}, who studied diffusion processes on post-critically finite self-similar fractals. In particular, he proved that the martingale dimension of Brownian motion on the Sierpi\'nski gasket is one, regardless of the dimension of the ambient Euclidean space. His approach used energy measures and products of random matrices to analyze the martingale dimension. Building on Kusuoka's work, Hino developed a general analytic characterization of martingale dimension in terms of the pointwise rank of energy-measure matrices. More precisely, \cite[Theorem~3.4]{Hin10} establishes that the index of a strongly local regular Dirichlet form, defined using a minimal energy-dominant measure, coincides with the martingale dimension of the associated diffusion defined in terms of stochastic integral representations.

Hino subsequently developed a method for obtaining upper bounds on martingale dimension using harmonic functions and Euclidean image measures \cite{Hin13}. In particular, he proved that the martingale dimension is bounded above by the spectral dimension for natural diffusions on Sierpi\'nski carpets. 
In the introduction to that paper, Hino wrote
\cite[p.~741]{Hin13}:
\begin{quote}
	``Although we presently require the self-similar
	structure of the state space for this argument,
	we expect the relation
	$1\le d_{\mathrm m}\le d_{\mathrm s}$
	to be true for more general metric measure
	spaces as well.''
\end{quote}
 The present work resolves this conjecture for strongly local regular Dirichlet forms under the volume doubling, Poincar\'e inequality, capacity upper bound, and heat-kernel assumptions considered here, \emph{without requiring any self-similar structure}.

More recently, Hino \cite{Hin25} developed a different extension of his method beyond self-similar spaces. Under a localized energy--capacity balance condition formulated using refining partitions, he proved that the martingale dimension is one \cite[Theorem~3.3]{Hin25}, without assuming heat kernel estimates or self-similarity. His proof constructs locally harmonic functions whose energy-measure matrices become asymptotically scalar under blow-up, and studies the weak limits of their Euclidean image measures \cite[Proposition~4.1]{Hin25}. The resulting absolute continuity of the image energy measures is used, together with logarithmic cutoff functions, to contradict the assumed lower bounds on relative capacities of points. The method applies, in particular, to inhomogeneous Sierpi\'nski gaskets.

Both approaches use blow-ups and weak limits of Euclidean image measures, thereby avoiding compactness of Dirichlet forms on varying spaces. The main differences lie in the construction of the coordinates and the final contradiction. Hino \cite[Proposition~4.1]{Hin25} works with locally harmonic functions on a refining sequence of partitions, whereas we start with functions that need not be harmonic and use Hyt\"onen's differentiation theory to select blow-up scales at which they become asymptotically harmonic and their energy-measure matrices become asymptotically scalar. Moreover, Hino uses a localized energy--capacity balance condition to show that the martingale dimension is one, while our argument derives uniform Sobolev inequalities from heat-kernel estimates to obtain a quantitative upper bound in terms of the spectral dimension.

A connection between martingale dimension and another notion of dimension was established in \cite[Theorem~3.2]{MurGauss25}. Recall that a metric measure space $(X,d,m)$ is called a \emph{Lipschitz differentiability space}
if it admits a countable family of measurable charts $(U_\alpha,\varphi_\alpha)$, where $U_\alpha\subset X$ are Borel sets covering
$X$ up to an $m$-null set and $\varphi_\alpha:X\to\mathbb R^{n_\alpha}$
are Lipschitz maps, such that every Lipschitz function $f:X\to\mathbb R$ is differentiable almost everywhere with respect to these charts.
More precisely, for $m$-almost every $x\in U_\alpha$, there exists a unique vector
$D_\alpha f(x)\in\mathbb R^{n_\alpha}$ satisfying
\[
\lim_{y\to x} \frac{\left| 	f(y)-f(x) 	-D_\alpha f(x)\cdot (\varphi_\alpha(y)-\varphi_\alpha(x)) \right|}{d(x,y)}=0.
\]
The essential supremum of the chart dimensions $n_\alpha$ is called the \emph{analytic dimension}. This notion generalizes Rademacher's
differentiability theorem to metric measure spaces; see \cite[Section~4]{Che99} and \cite[Definition~3.1]{Bat15}.
Cheeger \cite[Theorem~4.38]{Che99} proved that doubling metric measure spaces supporting a suitable Poincar\'e inequality admit such a differentiable structure.

Under two-sided Gaussian heat kernel bounds, the underlying metric measure space is a Lipschitz differentiability space. Moreover, the pointwise index of Hino coincides almost everywhere with the pointwise dimension of Cheeger's measurable differentiable structure \cite[Theorem~3.2]{MurGauss25}.
Consequently, the martingale dimension coincides with the analytic dimension.
 This gives the upper bound \[d_{\mathrm m}\le \dim_{\mathrm H}(\ambient,\metric)\]
under Gaussian heat kernel bounds \cite[Corollary~3.3]{MurGauss25}, where $\dim_{\mathrm H}(\ambient,\metric)$ denotes the Hausdorff dimension.

However, this approach does not extend to the strictly sub-Gaussian setting. Recent work of Eriksson-Bique and the second-named author \cite[Theorem~4.1]{EBM26} shows that if a strongly local regular Dirichlet form satisfies two-sided sub-Gaussian heat kernel estimates with walk dimension $\beta>2$, then the underlying metric measure space is not a Lipschitz differentiability space. In particular, Cheeger's analytic dimension, defined through a measurable differentiable structure, is not available in this setting. This highlights the need for a different approach to bounding martingale dimension under sub-Gaussian heat kernel estimates, as developed in the present paper.

This connection in the Gaussian case also reveals a close analogy between Hino's dimension bound and a conjecture of Cheeger. Cheeger \cite[Conjecture~4.63]{Che99} conjectured that the image of an $n$-dimensional differentiability chart has positive $n$-dimensional Hausdorff measure. In particular, this would imply that the analytic dimension is bounded above by the Hausdorff dimension of the underlying space. Under Gaussian heat kernel bounds, the equality between martingale and analytic dimensions identifies this consequence of Cheeger's conjecture with the corresponding upper bound for martingale dimension. In the presence of Ahlfors regular volume growth and Gaussian heat kernel estimates, the Hausdorff and spectral dimensions coincide, making the analogy with Hino's spectral-dimension bound particularly direct.

Cheeger's conjecture was resolved by De Philippis, Marchese and Rindler \cite[Theorem~4.1.1]{DPMR17}, who proved the stronger statement that the pushforward of the reference measure under a differentiability chart is absolutely continuous with respect to Lebesgue measure. Their proof uses independent Alberti representations and a structure theorem for normal currents due to De Philippis and Rindler \cite{DR16}.

The energy image density conjecture of Bouleau and Hirsch \cite{BH91} was recently resolved by Eriksson-Bique and the second-named author, who also extended
the result to strongly local regular Dirichlet forms without assuming absolute continuity of energy measures \cite[Theorems~1.7 and~2.65]{EBM25}. This work further establishes finiteness of martingale dimension under sub-Gaussian heat kernel estimates \cite[Theorem~5.7]{EBM25}, resolving \cite[Conjecture~3.12]{MurGauss25}, and provides a new proof of Cheeger's conjecture \cite[Proposition~5.18]{EBM25}.

The present work pursues a complementary, quantitative question: bounding martingale dimension by the spectral dimension rather than merely establishing its finiteness. Our approach returns to Hino's method of harmonic coordinates and Euclidean image measures, but replaces the self-similar compactness arguments by a blow-up procedure that does not require convergence of the underlying Dirichlet forms.

\subsection{Proof sketch and inspiration}

The main idea of our proof originates in Hino's work on martingale dimension for self-similar fractals \cite{Hin13}. We begin by recalling his argument in an idealized setting, both to explain the mechanism behind the dimension bound and to highlight the difficulties in extending this approach to general metric measure Dirichlet spaces.

Suppose that there exist $n$ harmonic functions $H^1,\ldots,H^n$ whose energy-measure matrix is exactly scalar:
\[\Gamma\langle H^i,H^k\rangle =\delta_{ik}\nu.\]
Writing $\mathbf H=(H^1,\ldots,H^n)$, the chain rule gives
\[\Gamma(u\circ\mathbf H)=|\nabla u|^2(\mathbf H)\,\nu,\ \text{for smooth functions $u$ on $\mathbb R^n$}.\]
Thus the energy of functions of the coordinates takes the form of a weighted Euclidean Dirichlet energy. This viewpoint is closely related to the theory of image Dirichlet structures developed by Bouleau and Hirsch \cite[Chapter~V]{BH91}, in which a measurable map transports a Dirichlet structure to its image, yielding a Dirichlet structure on Euclidean space. In our setting, however, the energy measures need not be absolutely continuous with respect to the reference measure, and we work directly with their Euclidean pushforwards rather than constructing an image Dirichlet form.

Let $\zeta$ be a suitable cutoff function, equal to one on a smaller region $V$, and consider the Euclidean image measures
\[\kappa=\mathbf H_*(\zeta^2\nu),\qquad\sigma=\mathbf H_*(\one_Vm).\]
The measure $\kappa$ governs the energy of functions of the coordinates, while $\sigma$ governs their $L^q$ norms.  Since these measures need not be mutually absolutely continuous, the main challenge is to obtain suitable growth estimates for both measures at a common point. This is achieved using Sobolev regularity, capacity estimates, and the pushforward Poincar\'e inequality.

Following the integration-by-parts argument of Hino \cite[Proposition~3.7]{Hin13}, harmonicity and the scalar energy-measure identity imply that
\[\kappa=\rho\mathcal L^n,\ \text{ and }\ \sqrt\rho\in H^1(\mathbb R^n).\]
The key observation is that integration by parts, combined with a Cauchy--Schwarz inequality for energy measures, gives a distributional identity $\nabla\kappa=b\kappa$ with $b\in L^2(\kappa;\mathbb R^n)$, from which the Sobolev regularity of $\sqrt\rho$ follows. The corresponding argument for approximate coordinates is carried out in Lemma~\ref{l.euclidean-image}-\ref{it.image2}, using the matrix-valued Cauchy--Schwarz inequality of Lemma~\ref{l.matrix-CS}.

A pushforward Poincar\'e inequality provides additional control of $\sigma$. Together with the nondegeneracy of the coordinates, which rules out $\sigma$ being a point mass, and the Sobolev regularity of $\sqrt\rho$, this implies that $\sigma$ does not charge sets of zero $(1,2)$-Sobolev capacity. These conclusions are established for the limiting image measures in Lemma~\ref{l.image-poincare}-\ref{it.image-poincare2}.

Now suppose that the original space satisfies a Sobolev inequality of dimension $s$, where $2<s<n$. Applying this inequality to functions of the form $u\circ\mathbf H$, with suitable cutoffs, yields a weighted Sobolev inequality on $\mathbb R^n$ of the form
\[\left(\int_{\mathbb R^n}|u|^q\dif \sigma\right)^{2/q}\le C\left(\int_{\mathbb R^n}|\nabla u|^2\dif \kappa+\int_{\mathbb R^n}u^2\dif \lambda\right),\]
where $q=2s/(s-2)$ and $\lambda$ is a finite measure arising from the cutoff energy and the $L^2$ term.

The regularity of $\sqrt\rho$, together with the non-concentration of $\sigma$ on polar sets and the Euclidean capacity and density estimates collected in Proposition~\ref{p.CapE}, allows us to find a point $a\in\mathbb R^n$ such that
\[
\kappa(B(a,r))\le Cr^n,\qquad
\sigma(B(a,r))\ge cr^n,\ \text{ and }\
\lambda(B(a,r))=o(r^{n-2}),\ \text{ as }r\downarrow0.
\]
The upper bound for $\kappa$ controls the energy of Euclidean cutoff functions, while the lower bound for $\sigma$ controls their $L^q$ norms from below. The contribution of $\lambda$ is negligible at a relevant scale. Testing the weighted Sobolev inequality with cutoff functions supported on balls of radius $r$ leads to \[r^{2n/q}=r^{n(s-2)/s}\lesssim r^{n-2},\] which is impossible as $r\downarrow0$ since $n>s$. When $s<2$, an analogous contradiction is obtained using an $L^\infty$ Sobolev inequality and logarithmic cutoff functions in $\mathbb R^2$. This is essentially the dimension contradiction underlying Hino's approach; see \cite[Theorem~3.14 and Strategy~3.15]{Hin13}.

The main difficulty in implementing the preceding argument is the construction of harmonic coordinates with an exactly scalar energy-measure matrix. Indeed, the assumption that the martingale dimension is at least $n$ only guarantees the existence of $n$ functions whose energy-measure matrix has rank $n$ on a set of positive measure. These functions need not be harmonic, nor need their energy-measure matrix be scalar, as required for the preceding argument.

In his earlier work on self-similar fractals, Hino \cite{Hin13} exploited two important structural advantages. First, the space of piecewise harmonic functions is dense in the domain of the Dirichlet form; see the discussion preceding \cite[Proposition~4.7]{Hin13}, which refers to \cite[Lemma~3.10]{Hin05}. This allows him to select locally harmonic functions whose energy-measure matrix has full rank on a set of positive measure. Second, self-similarity allows harmonic functions on shrinking cells to be pulled back to a fixed Dirichlet space; see \cite[Lemma~4.3 and Corollary~4.6]{Hin13}. In the case of Sierpi\'nski carpets, compactness of harmonic functions \cite[Proposition~4.16]{Hin13} then leads to limiting harmonic coordinates with an exactly constant energy-measure matrix, as in the proof of \cite[Theorem~4.14]{Hin13}.

More recently, Hino \cite{Hin25} developed a related approach that no longer requires self-similarity. Under assumptions involving refining partitions and a localized energy--capacity balance condition, he constructs locally harmonic coordinates and studies weak limits of their Euclidean image measures; see \cite[Propositions~3.10 and 4.1]{Hin25}. In particular, this approach does not require convergence of Dirichlet forms on varying spaces. Thus an important feature shared by his recent work and ours is that the limiting analysis takes place on a fixed Euclidean space. The assumptions of \cite{Hin25} and those of the present work are different, and neither set of assumptions subsumes the other. Hino's localized energy--capacity balance condition, together with his refining partition hypotheses, leads to the conclusion that the martingale dimension is one \cite[Theorem~3.3]{Hin25}. By contrast, our assumptions allow arbitrarily large martingale dimension, as illustrated by Brownian motion on $\mathbb R^n$. Our goal is therefore to obtain a quantitative upper bound on martingale dimension rather than a sufficient condition to establish that it equals one.

One could also seek limiting harmonic coordinates through a compactness theory for Dirichlet forms on varying metric measure spaces, perhaps using the framework of Mosco convergence developed by Kuwae and Shioya \cite{KS03}. Recent work of the first-named author \cite[Theorem~1.2]{Chen26} establishes Mosco convergence of Dirichlet forms under uniform heat-kernel estimates and additional geometric assumptions, including homogeneous volume growth and the requirement that the underlying spaces be proper length spaces (and hence satisfy a chain condition). Neither homogeneous volume growth nor the chain condition is assumed in our setting. Moreover, retaining sufficient control of the energy-measure matrices of the coordinate functions under such convergence could require further arguments.

A further inspiration comes from Cheeger's work on differentiability of Lipschitz functions on metric measure spaces. Cheeger \cite[Theorem~3.7]{Che99} showed that, under a Vitali covering assumption, Lipschitz functions are asymptotically generalized linear at almost every point. In particular, they are asymptotically $p$-harmonic, in the sense that their normalized $p$-energy becomes asymptotically minimal under perturbations with fixed boundary values. Thus, even functions that are not harmonic may exhibit asymptotic harmonicity under blow-up, that is, when examined at increasingly small spatial scales. Although Cheeger's result concerns a different notion of energy and cannot be applied directly to our Dirichlet forms, it provides a natural motivation for seeking harmonicity only after blow-up.

Our approach is to construct approximate harmonic coordinates without first selecting locally harmonic functions. Starting from a dense resolvent core of functions with bounded generators, we use Hyt\"onen's differentiation theory \cite{Hyt10} to select blow-up scales at which their energy-measure matrix becomes approximately constant. After a suitable linear transformation and normalization, this matrix becomes asymptotically scalar, while the generators of the rescaled functions tend to zero. These functions, which we call \emph{approximate harmonic coordinates}, are constructed in Proposition~\ref{p.finite-normalization}.

As in Hino's recent work, we then pass to weak limits of Euclidean image measures rather than seeking convergence of the coordinate functions or the underlying Dirichlet forms. The main additional difficulty is that our coordinates are only approximately harmonic and their energy-measure matrices are only approximately scalar. We show that the corresponding errors vanish in the limit, recovering the integration-by-parts identity and Sobolev regularity of the limiting image energy measure.

Finally, the contradiction argument reflects the different objectives of the two approaches. Hino's recent work uses a localized energy--capacity balance condition to rule out martingale dimension greater than one. In contrast, we derive uniform Sobolev inequalities from heat-kernel estimates and combine them with the Euclidean image-measure analysis to obtain the quantitative bound $d_{\mathrm m}\le d_{\mathrm s}$.

\subsection{Organization of the paper}

In Section~\ref{s.scale}, we make the construction of approximate harmonic coordinates rigorous. Lemma~\ref{l.core}-\ref{it.core3}, based on Hino's characterization of martingale dimension, provides functions whose energy-measure matrix has rank at least $n$ on a set of positive measure. Hyt\"onen's differentiation theory, in the form of Lemma~\ref{l.diffe}, allows us to select a point and a sequence of good blow-up scales at which the normalized energy-measure matrix is approximately constant, while the reference measure is suitably controlled relative to the energy measure. Proposition~\ref{p.finite-normalization} then constructs approximate harmonic coordinates whose energy-measure matrices are asymptotically scalar, with errors tending to zero in total variation. These functions are uniformly bounded and H\"older continuous on appropriate rescaled balls, and their generators tend to zero uniformly.

Section~\ref{s.image} studies the Euclidean image measures associated with these coordinates. Lemma~\ref{l.uniform-cutoff} provides cutoff functions with uniformly bounded energies. Following Hino's integration-by-parts method, Lemma~\ref{l.euclidean-image}-\ref{it.image2} establishes that the weak limit of the image energy measures has the form $\kappa=\rho\mathcal L^n$, with $\sqrt\rho\in H^1(\mathbb R^n)$. The main difficulty is that the coordinates are only approximately harmonic and their energy-measure matrices are only  approximately scalar. The matrix-valued Cauchy--Schwarz inequality of Lemma~\ref{l.matrix-CS} allows us to control the resulting errors and obtain the required integration-by-parts identity in the limit. Lemma~\ref{l.image-poincare}  studies the weak limit $\sigma$ of the pushforwards of the reference measures. A pushforward Poincar\'e inequality, together with a Caccioppoli argument excluding point masses, shows that $\sigma$ does not charge sets of zero $(1,2)$-Sobolev capacity. These results, together with the Euclidean capacity estimates of Proposition~\ref{p.CapE}, provide the measure-theoretic ingredients needed for the contradiction described above.

Finally, Section~\ref{s.proof} establishes the required uniform Sobolev inequalities and completes the proof. Lemma~\ref{l.uniform-profile} extracts uniform volume growth estimates from the definition of the uniform small-scale spectral dimension. Combined with heat-kernel upper bounds, these give uniform ultracontractivity estimates for semigroups killed on fixed rescaled balls (Lemma~\ref{l.local-killed}). The results of Varopoulos and Coulhon then yield the uniform Sobolev inequalities of Lemma~\ref{l.truncated-sob}. Applying these inequalities to functions of the approximate coordinates and passing to the limit gives weighted Sobolev inequalities for the Euclidean image measures. When $d_{\mathrm s}<2$, we use an $L^\infty$-Sobolev inequality and logarithmic cutoff functions to rule out martingale dimension at least two. When $d_{\mathrm s}\ge2$, we use an $L^q$-Sobolev inequality and rescaled Euclidean cutoff functions to obtain a contradiction whenever $d_{\mathrm m}>d_{\mathrm s}$. This completes the proof of Theorem~\ref{t.main}.
\begin{notation}
	Throughout this paper, we use the following notation and conventions.
\begin{enumerate}[label=\textup{({\roman*})}, align=right, leftmargin=*, topsep=5pt, parsep=0pt, itemsep=2pt]
\item $\bN:=\Sett{n\in\bZ}{n\geq1}$.
\item The symbols $\subset$ and $\supset$ allow the case of the equality of two sets.
\item Let $\{(X_{j},d_{j})\}_{j}$ be metric spaces. For each $j$ and $(x,r)\in X_{j}\times (0,\infty)$, we write the open ball in $X_{j}$ centered at $x$ with radii $r$ \[B_{j}(x,r):=\{z\in X_{j}:d_{j}(x,z)<r\}.\]
\item Let $X$ be a non-empty set. We define $\one_{A}=\one_{A}^{X}\in\mathbb{R}^{X}$ for $A\subset X$ by
	 \[\one_{A}(x):=\one_{A}^{X}(x):= \begin{cases}
	 	1 & \mbox{if $x \in A$,}\\
	 	0 & \mbox{if $x \notin A$.}
	 \end{cases} \]
\item Let $X$ be a topological space. We set $C(X):=\{f\mid\textrm{$f:X\to\mathbb{R}$, $f$ is continuous}\}$, $C_{b}(X):=\{f\in C(X)\mid\textrm{$f$ is bounded}\}$, and \[C_c(X):=\{f\in C(X)\mid\textrm{$X\setminus f^{-1}(0)$ has compact closure in $X$}\}.\]
\item We use the letters $C$, $c$ etc. to denote positive constant whose value is inessential and may change at each occurrence. 
\item For two extended real numbers $A,B\in\mathbb{R}\cup\{-\infty,\infty\}$, let $A\wedge B:=\min(A,B)$ and $A\vee B:=\max(A,B)$.
\end{enumerate}
\end{notation}
\section{Selection of good blow-up scales}\label{s.scale}
The goal of this section is to construct a sequence of continuous maps $\mathbf H_j:\ambient\to\mathbb R^n$, whose components belong to $\domain_{\loc}(\ambient)$, such that their energy-measure matrices become asymptotically scalar and the functions become asymptotically harmonic under blow-up. We call these functions \emph{approximate harmonic coordinates}.

\subsection{Differentiation of measures}

\begin{definition}
	We say a metric space $(\ambient,\metric)$ is \hypertarget{MD}{\emph{metric doubling}} $\MD$, if there exists $N\in\bN$ such that, for all $(x,r)\in\ambient\times(0,\infty)$, there exists $\{x_{j}\}_{j=1}^{N}\subset\ambient$ such that $B(x,r)\subset\bigcup_{j=1}^{N}B(x_{j},r/2)$.
\end{definition}
By \eqref{e.VD-polynomial}, for a metric space $(\ambient,\metric)$, if there is a measure such that $(\ambient,\metric,\meas)$ is $\VD$, then $(\ambient,\metric)$ is $\MD$.

We collect some consequences of the differentiation theory for Radon measures on metric doubling spaces, developed by Hyt\"onen \cite[Section~3]{Hyt10}. A fundamental observation of Hyt\"onen is that, even when a Radon measure is not doubling, the metric doubling property guarantees the existence of arbitrarily small scales at almost every point along which the measure satisfies a uniform doubling estimate. Moreover, Hyt\"onen established a Lebesgue differentiation theorem along these good scales. His results play a crucial role in our argument, allowing us to work with energy measures that need not be doubling.

In particular, Hyt\"onen's differentiation theory provides a substitute for the martingale differentiation argument used by Hino on nested self-similar cells. It enables us to select blow-up scales at which the energy-measure matrix is approximately constant, while simultaneously controlling the reference measure relative to the energy measure. The following lemma records the precise consequences needed for our purposes, including the selection of good radii from a prescribed sequence and comparison with a second Radon measure. Its proof combines Hyt\"onen's covering and differentiation arguments with a weak-type maximal estimate obtained using the $5B$-covering lemma.

\begin{lemma}\label{l.diffe}
Let $(\ambient,\metric)$ be a metric space satisfying $\MD$ with constant $N\in\mathbb N$, and let $\nu$ be a positive Radon measure on $\ambient$ such that $0<\nu(\ambient)<\infty$. Fix $K\in[1,\infty)$ and set $D=D_K:=2N^{\lceil \log_{2}(10K)\rceil}$. Define
\begin{equation}
R_\nu(x):=\Sett{r\in(0,\infty)}{\nu(B(x,5Kr))\leq D\nu(B(x,r))},\ \text{ for }x\in\supp[\nu].
\end{equation}
Let $\{\rho_{j}\}_{j\in\bN}$ be a sequence such that $0<\rho_{j+1}\leq\rho_{j}$ for all $j\in\bN$ and $\lim_{j\to\infty}\rho_{j}=0$. Then the following statements hold.
\begin{enumerate}[label=\textup{(\arabic*)},align=right,leftmargin=*,
topsep=5pt,parsep=0pt,itemsep=5pt]
\item\label{it.diffe1} For $\nu$-a.e. $x\in \ambient$, there exists $k_{j}(x)\in\bN$, $j\in\bN$, such that $k_{j}(x)<k_{j+1}(x)$ and $r_{j}:=\rho_{k_{j}(x)}\in R_{\nu}(x)$ for all $j\in\bN$. In particular, $\{r_j\}_{j\in\mathbb N}\subset R_\nu(x)$ and $r_j\downarrow0$.
\item\label{it.diffe2}
For every $n\in\mathbb N$ and every $A=(A_{ij})_{1\le i,j\le n}\in L^1(\ambient,\nu;\mathbb R^{n\times n})$,
\begin{equation}\label{e.diffe1}
	\lim_{\substack{r\downarrow0\\r\in R_\nu(x)}}\frac{\int_{B(x,Kr)}\|A(x)-A(y)\|\dif\nu(y)}{\nu(B(x,r))}=0\quad\text{for }\nu\text{-a.e. }x\in \ambient,
\end{equation}
where, for $B=(B_{ij})\in\mathbb R^{n\times n}$, we set $\|B\|=\sum_{i,j=1}^n|B_{ij}|$.

\item\label{it.diffe3} For every locally finite positive Radon measure $\mu$ on $\ambient$, for $\nu$-a.e. $x\in \ambient$ there exists $r_x\in(0,\infty)$ such that
\begin{equation}\label{e.diffe2}
\sup_{{r\in R_\nu(x)\cap (0,r_{x})}}\frac{\mu(B(x,Kr))}{\nu(B(x,r))}<\infty.
\end{equation}
If, in addition, $\mu(\ambient)<\infty$, then
\begin{equation}\label{e.diffe-gb}
\sup_{r\in R_\nu(x)}\frac{\mu(B(x,Kr))}{\nu(B(x,r))}<\infty\quad\text{for }\nu\text{-a.e. }x\in \ambient.
\end{equation}
\end{enumerate}
\end{lemma}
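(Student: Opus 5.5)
Part \ref{it.diffe1} comes from a Hyt\"onen-type counting argument. Fix $L:=\lceil\log_2(10K)\rceil$, so $5Kr\le 2^{L}r/2$, and note that by $\MD$ any ball $B(x,5Kr)$ is covered by $N^{L}$ balls of radius $r/2$. Call $r$ \emph{bad} for $x$ if $r\notin R_\nu(x)$. It suffices to show that for each fixed $j_0$ the set
\[E_{j_0}:=\{x\in\supp[\nu]: \rho_k\notin R_\nu(x)\text{ for all }k\ge j_0\}\]
is $\nu$-null; then a.e.\ $x$ has infinitely many good $\rho_k$, and we enumerate them increasingly as $k_j(x)$. To bound $\nu(E_{j_0})$ I would pass to a subsequence of $\{\rho_k\}$ with ratios at least $10K$ (possible since $\rho_k\downarrow0$, and this only shrinks the set of indices we check, so it enlarges $E_{j_0}$). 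Suppose $x\in E_{j_0}$ and consider $m$ consecutive radii $s_1>s_2>\dots>s_m$ of the subsequence with $s_{i+1}\le s_i/(10K)$. Badness at every scale gives $\nu(B(x,5Ks_1))> D^m\nu(B(x,s_m))$ after chaining (using $B(x,5Ks_{i+1})\subset B(x,s_i)$). Now cover $E_{j_0}\cap B$ for a fixed bounded region by a maximal $s_m$-separated net; balls $B(z,s_m/2)$ are disjoint, and each $x\in E_{j_0}$ lies in some $B(z,s_m)\subset B(x,2s_m)$. The point is to compare $\nu(B(z,s_m))$ with the mass of the big ball and use that the number of net points inside a ball of radius $5Ks_1$ is controlled by $N^{O(m\log(10K))}$, roughly $(D/2)^{m}$ by the choice $D=2N^{L}$. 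Summing, $\nu(E_{j_0}\cap B)\lesssim (N^{L}/D)^m\nu(\ambient)=2^{-m}\nu(\ambient)$, and letting $m\to\infty$ gives $\nu(E_{j_0})=0$. This is exactly Hyt\"onen's argument \cite[Section~3]{Hyt10}, and I would follow it, carefully tracking that the covering count at scale ratio $10K$ is $N^{L}$ per step.

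For parts \ref{it.diffe2} and \ref{it.diffe3} the key tool is a weak-type estimate for the restricted maximal operator
\[M f(x):=\sup_{r\in R_\nu(x)}\frac{1}{\nu(B(x,r))}\int_{B(x,Kr)}|f|\dif\nu,\]
and its analogue $M\mu(x):=\sup_{r\in R_\nu(x)}\mu(B(x,Kr))/\nu(B(x,r))$. I would show $\nu(\{M\mu>t\})\le C t^{-1}\mu(\ambient)$ with $C$ depending on $D$: for each $x$ in the level set choose $r_x\in R_\nu(x)$ with $\mu(B(x,Kr_x))>t\nu(B(x,r_x))$, apply the $5B$-covering lemma to the balls $B(x,Kr_x)$ (radii bounded when $\mu$ is finite and $\nu(\ambient)<\infty$; otherwise truncate radii, which is why \eqref{e.diffe2} only asks for $r<r_x$), getting disjoint $B(x_i,Kr_i)$ with the level set in $\bigcup B(x_i,5Kr_i)$, and estimate $\nu(B(x_i,5Kr_i))\le D\nu(B(x_i,r_i))<Dt^{-1}\mu(B(x_i,Kr_i))$. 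Summing over disjoint balls gives the claim. Then \eqref{e.diffe-gb} follows immediately since $\nu(\{M\mu=\infty\})=0$, and \eqref{e.diffe2} follows by localization: for $\mu$ locally finite apply the finite case to $\mu|_{B(x_0,R)}$ and restrict to radii $r<r_x$ so small that $B(x,Kr)$ stays in $B(x_0,R)$, exhausting $\ambient$ by countably many such balls.

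For \eqref{e.diffe1} I would run the standard density argument with this maximal inequality (applied to $\mu=|f|\nu$). Approximate each entry $A_{ij}$ in $L^1(\nu)$ by bounded continuous (Lipschitz) functions $g$, which are dense since $\nu$ is a finite Radon measure on a metric space. For continuous $g$ the limit is zero because $\nu(B(x,Kr))/\nu(B(x,r))\le D$ on $R_\nu(x)$ and $\sup_{B(x,Kr)}|g(x)-g(y)|\to0$. For the remainder $h=A_{ij}-g$, bound the limsup by $|h(x)|\cdot D+Mh(x)$, and use Chebyshev plus the weak-type estimate to show $\nu(\{\limsup>t\})\le C t^{-1}\|h\|_{L^1(\nu)}$, which can be made arbitrarily small. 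The main obstacle I expect is part \ref{it.diffe1}: getting the constant $D=2N^{\lceil\log_2(10K)\rceil}$ to beat the growth of the covering count exactly requires careful bookkeeping of the chained scales and the net, whereas parts \ref{it.diffe2}--\ref{it.diffe3} are routine once the weak-type bound is in place.
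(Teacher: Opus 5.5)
\paragraph{Part (1) has a genuine gap.} Your treatment of parts (2) and (3) is sound, but the argument for part (1) fails for general sequences $\rho_j$.

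The chaining step is fine: with $s_{i+1}\le s_i/(10K)$ you get $\nu(B(x,5Ks_1))>D^m\nu(B(x,s_m))$. The problem is the overlap count.

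\begin{itemize}
\item The number of $s_m$-separated points in a ball of radius $5Ks_1$ is only bounded by $N^{\lceil\log_2(10Ks_1/s_m)\rceil}$.
\item To beat $D^{-m}=(2N^{L})^{-m}$ you need $\log_2(10Ks_1/s_m)\le mL$. That is, consecutive ratios must be essentially \emph{at most} $2^L$.
\item Passing to a subsequence with ratios \emph{at least} $10K$ gives only a lower bound on $s_1/s_m$.
\end{itemize}

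The lemma allows an arbitrary nonincreasing $\rho_j\downarrow0$. Take $\rho_k=2^{-2^k}$. Then every choice of $m$ scales has $\log_2(s_1/s_m)\ge 2^{m-1}-1$. The available bound becomes $(2N^L)^{-m}N^{2^{m-1}-1}\to\infty$ for $N\ge2$. Badness is only known at the scales $\rho_k$, so you cannot insert intermediate radii.

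So this is not a bookkeeping issue. Your multi-scale iteration works only when the ratios can be chosen comparable to $10K$. This holds for $\rho_j=2^{-j}$, which is what Proposition~\ref{p.finite-normalization} actually uses, but not in the stated generality.

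\paragraph{The missing idea.} A single bad scale suffices if you compare with a shrinking neighbourhood of a closed set, rather than with $\nu(X)$.

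Suppose the bad set had positive measure. Inner regularity gives $m$ and a compact $F$ with $\nu(F)>0$ such that every $\rho_j$, $j\ge m$, is bad for every point of $F$. Fix one such $j$ and a maximal $\rho_j$-separated set $Y_j\subset F$.

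Each $z\in X$ lies in at most $N^{L}$ of the balls $B(y,5K\rho_j)$, $y\in Y_j$. Indeed, $B(z,5K\rho_j)$ is covered by $N^L$ balls of radius at most $\rho_j/2$, each containing at most one point of $Y_j$. Hence
\[\nu(F)\le\sum_{y\in Y_j}\nu(B(y,\rho_j))\le D^{-1}\sum_{y\in Y_j}\nu(B(y,5K\rho_j))\le\tfrac12\,\nu\bigl(\{z:\dist(z,F)<5K\rho_j\}\bigr).\]
Let $j\to\infty$. Continuity from above (with $\nu$ finite and $F$ closed) gives $\nu(F)\le\frac12\nu(F)$, a contradiction. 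No control on $\rho_j/\rho_{j+1}$ is needed. This is exactly the paper's argument.

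\paragraph{Parts (2)--(3).} Your route here differs only mildly from the paper's.

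\begin{itemize}
\item For (2), the paper quotes Hyt\"onen's Corollary~3.6. It uses that $Kr$ is a $(5,D)$-doubling radius whenever $r\in R_\nu(x)$. You instead reprove this via a restricted maximal inequality and density of continuous functions, which is fine.
\item For (3), the paper uses the uncentred maximal function $\sup\mu(B(z,s))/\nu(B(z,5s))$ over $x\in B(z,s)$, $s\le R$. This is lower semicontinuous, and its weak-type bound needs no doubling. Your $x$-dependent constraint $r\in R_\nu(x)$ leaves measurability of $\{M\mu>t\}$ unclear, though an outer-measure bound suffices.
\item Finiteness of $\mu$ and $\nu$ does not by itself bound the radii $r_x$ in your covering. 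Truncate at $s\le R$ and let $R\uparrow\infty$ monotonically.
\item In the locally finite case, localize to balls of finite $\mu$-measure chosen via Lindel\"of, not arbitrary $B(x_0,R)$.
\end{itemize}
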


\begin{proof}
\begin{enumerate}[label=\textup{({\arabic*})}, align=right, leftmargin=*, topsep=5pt, parsep=0pt, itemsep=2pt]
\item[\ref{it.diffe1}] 
We adapt the argument of \cite[Lemma 3.3]{Hyt10}. Define
\begin{equation}
	F_{j}:=\Sett{x\in \supp[\nu]}{\nu(B(x,5K\rho_{j}))>D\nu(B(x,\rho_{j}))},\ j\in\bN,\ \text{ and }N_{0}:=\bigcup_{m\in\bN}\bigcap_{j=m}^{\infty}F_{j}.
\end{equation}
Then $F_{j}$ and $N_{0}$ are Borel sets, since $x\mapsto \nu(B(x,s))$ is lower semicontinuous for each $s\in(0,\infty)$. 

Suppose $\nu(N_{0})>0$. Then by the inner regularity of $\nu$, there exist $m\in\bN$ and a compact $F\subset\ambient$ such that $F\subset\bigcap_{j=m}^{\infty}F_{j}$ and $\nu(F)\in(0,\infty)$. Then 
\begin{equation}\label{e.subse1}
	\nu(B(x,\rho_{j}))\leq D^{-1}\nu(B(x,5K\rho_{j})),\ \text{ for every $j\geq m$ and every $x\in F$}.
\end{equation} 
For any $j\in\bN$, let $Y_{j}\subset F$ be a finite maximal $\rho_{j}$-separated set, that is, $F\subset\bigcup_{y\in Y_{j}}B(y,\rho_{j})$, and for any $y,y'\in Y_{j}$ with $y\neq y'$, we have $\metric(y,y')\geq\rho_{j}$. By $\MD$, we know that 
\begin{equation}\label{e.subse2}
	\#(Y_{j}\cap B(z,5K\rho_{j}))\leq N^{\lceil \log_{2}(10K)\rceil},\ \text{ for every }z\in\ambient.
\end{equation}
Consequently, 
\begin{align}
	0<\nu(F)&\leq \sum_{y\in Y_{j}}\nu(B(y,\rho_{j}))\overset{\eqref{e.subse1}}{\leq}D^{-1}\sum_{y\in Y_{j}}\nu(B(y,5K\rho_{j}))\\
	&\overset{\eqref{e.subse2}}{\leq}N^{\lceil \log_{2}(10K)\rceil}D^{-1}\nu(\Sett{z\in\ambient}{\dist(z,F)<5K\rho_{j}}.
\end{align}
Letting $j\to\infty$ and recalling the definition of $D$, we have $0<\nu(F)\leq \frac{1}{2}\nu(F)$, a contradiction. Therefore, we must have $\nu(N_{0})=0$. By definition of $\{F_{j}\}_{j\in\bN}$, for any $x\in\supp[\nu]\setminus N_{0}$, we have $\#\Sett{j\in\bN}{\rho_{j}\in R_{\nu}(x)}=\infty$. Consequently, we may extract a subsequence of $\{\rho_{k_{j}(x)}\}_{j\in\bN}\subset R_{\nu}(x)$. 

\item[\ref{it.diffe2}] 
Since $D>5^{\log_2N}$, by applying \cite[Corollary~3.6]{Hyt10} to the each entry of $A$, we have,
\begin{equation}\label{e.diffesc}
\lim_{\substack{s\downarrow0\\ \nu(B(x,5s))\leq D\nu(B(x,s))}}\fint_{B(x,s)}|A_{ij}(x)-A_{ij}(y)|\dif\nu(y)=0,
\end{equation}
for each $1\le i,j\le n$ and $\nu$-almost every $x\in\ambient$. For every $r\in R_\nu(x)$, we have by the definition of $R_\nu(x)$,
\begin{equation}\label{e.diffodb}
\nu(B(x,5Kr))\leq D\nu(B(x,r))\leq D\nu(B(x,Kr)).
\end{equation}
Therefore, $\Sett{Kr}{r\in R_\nu(x)}\subset \Sett{s\in(0,\infty)}{\nu(B(x,5s))\leq D\nu(B(x,s))}$, and this combines with \eqref{e.diffesc} gives \begin{equation}
	\lim_{\substack{r\downarrow0\\r\in R_\nu(x)}}\fint_{B(x,Kr)}|A_{ij}(x)-A_{ij}(y)|\dif\nu(y)=0,
\end{equation}
for each $1\le i,j\le n$ and $\nu$-a.e. $x\in\ambient$. By the definition of the matrix norm and \eqref{e.diffodb},
\begin{align}
	&\phantom{\ \leq}\lim_{\substack{r\downarrow0\\r\in R_\nu(x)}}
	\frac{\int_{B(x,Kr)}
		\|A(x)-A(y)\|\dif\nu(y)}
	{\nu(B(x,r))}\\
	&\le
	D\sum_{i,j=1}^n
	\lim_{\substack{r\downarrow0\\r\in R_\nu(x)}}
	\fint_{B(x,Kr)}
	|A_{ij}(x)-A_{ij}(y)|\dif\nu(y)\overset{\eqref{e.diffodb}}{=}0.
\end{align}

\item[\ref{it.diffe3}] 
Define
\begin{equation}\label{e.diffem1}
\sM_R[\mu](x):=\sup_{\substack{z\in B(x,s),\ 0<s\leq R\\  \nu(B(z,5s))>0}}\frac{\mu(B(z,s))}{\nu(B(z,5s))},\ \text{ for }(x,R)\in\ambient\times(0,\infty).
\end{equation}
where we adopt the convention $\sup\emptyset:=0$. For every $R,t\in(0,\infty)$, we have
\begin{equation}\label{e.diffem2}
\Sett{x\in \ambient}{\sM_R[\mu](x)>t}
=
\bigcup_{\substack{z\in \ambient,\ 0<s\leq R\\
0<\nu(B(z,5s))<t^{-1}\mu(B(z,s))}}B(z,s).
\end{equation}
Thus $x\mapsto\sM_R[\mu](x)$ is lower semicontinuous, and in particular Borel measurable. By applying the $5B$-covering lemma to the family on the right-hand side of \eqref{e.diffem2}, we obtain an at most countable pairwise disjoint balls $\{B(z_i,s_i)\}_{i\in I}$ such that, $s_{i}\in(0,R]$ for all $i\in I$,
\begin{align}
\Sett{x\in \ambient}{\sM_R[\mu](x)>t}&\subset\bigcup_{i\in I}B(z_i,5s_i),\label{e.diffem31}\\
0<\nu(B(z_i,5s_i))&<t^{-1}\mu(B(z_i,s_i)),\ \text{ for all } i\in I.\label{e.diffem32}
\end{align}

Therefore,
\begin{equation}
\nu(\{\sM_R[\mu]>t\})\overset{\eqref{e.diffem31}}{\leq}\sum_{i\in I}\nu(B(z_i,5s_i))\\ \overset{\eqref{e.diffem32}}{\leq}\frac{1}{t}\sum_{i\in I}\mu(B(z_i,s_i))\leq\frac{\mu(\ambient)}t.
\end{equation}
Set $\sM[\mu]:=\sup_{j\in\mathbb N}\sM_j[\mu]$. Since $\sM_j[\mu]\uparrow\sM[\mu]$,
\begin{equation}\label{e.diffem4}
\nu(\{\sM[\mu]>t\})=\lim_{j\to\infty}\nu(\{\sM_j\mu>t\})\leq\frac{\mu(\ambient)}t,\ \text{ for all } t\in(0,\infty)
\end{equation}
\begin{itemize}
\item If $\mu(\ambient)<\infty$, we have $\nu(\{\sM[\mu]=\infty\})=0$ by \eqref{e.diffem4}. For $x\in\supp[\nu]$ with $\sM[\mu](x)<\infty$ and every $r\in R_\nu(x)$, the ball $B(x,Kr)$ is admissible in \eqref{e.diffem1} whenever $R\geq Kr$, and
	\begin{equation}
		\frac{\mu(B(x,Kr))}{\nu(B(x,r))}
		\leq D\frac{\mu(B(x,Kr))}{\nu(B(x,5Kr))}
		\leq D\sM\mu(x)<\infty.
	\end{equation}
Taking the supremum over $r\in R_\nu(x)$ proves \eqref{e.diffe-gb}.
\item If $\mu$ is locally finite. Since $(\ambient,\metric)$ is second countable (since it is $\MD$, it is separable), there is a countable open cover $\{U_j\}_{j\in\mathbb N}$ such that $\mu(U_j)<\infty$. Let $\mu_j:=\one_{U_j}\cdot {\mu}$ and
\begin{equation}
E:=\bigcup_{j\in\mathbb N}\Sett{x\in \ambient}{\sM[\mu_j](x)=\infty}.
\end{equation}
By \eqref{e.diffem4}, $E$ is Borel measurable and $\nu(E)=0$. Fix $x\in\supp[\nu]\setminus E$, choose $j$ such that $x\in U_j$, and choose $r_x\in(0,\infty)$ such that $B(x,Kr_x)\subset U_j$. Then, for every $r\in R_\nu(x)\cap(0,r_x)$,
\begin{equation}
\frac{\mu(B(x,Kr))}{\nu(B(x,r))}=\frac{\mu_j(B(x,Kr))}{\nu(B(x,r))}\leq D\sM[\mu_j](x)<\infty.
\end{equation}
Taking the supremum over $r\in R_\nu(x)\cap(0,r_x)$ proves \eqref{e.diffe2}.\end{itemize}
\end{enumerate}
\end{proof}

\subsection{Regularity estimates for Poisson equations}
 
 We establish uniform local boundedness and H\"older regularity estimates for solutions of Poisson equations on families of metric measure Dirichlet spaces. The main ingredients are Green operator estimates and local regularity estimates for harmonic functions, which follow from $\VD$, $\PI{\Psi}$ and $\Capa{\Psi}$. The Poisson estimates are then obtained by decomposing a solution into a harmonic function and a Green potential. The uniformity of these estimates is essential for constructing approximate harmonic coordinates in Proposition~\ref{p.finite-normalization}.
  
We first recall the definition of the part Dirichlet form and its Green operator.  For an open set $U\subset X$, define
\[
\domain_0(U):= \left\{
u\in\domain: \widetilde u=0 \quad\text{q.e.\ on }X\setminus U \right\},
\]
where $\widetilde u$ denotes the quasi-continuous representative of $u$.
The restriction of $\form$ to $\domain_0(U)$
defines the part Dirichlet form on $L^2(U,m)$;
see \cite[Section~4.4]{FOT11}.  The restriction of $\form$ to $\domain_0(U)$ defines the part Dirichlet form on $L^2(U,\meas)$, with semigroup $P_t^U$ and nonpositive self-adjoint generator $\gen^U$; see \cite[Section~4.4]{FOT11}.
  
 Define \[\lambda_1(U):=\inf_{f\in\domain_0(U)\setminus\{0\}}\frac{\form(f)}{\|f\|_{L^2(U,\meas)}^2}.\] Whenever $\lambda_1(U)>0$, the \emph{0-order resolvent}, or \emph{Green operator}, is the bounded operator on $L^2(U,m)$ given by \[G_U:=(-\gen^U)^{-1}=\int_0^\infty P_t^U\dif t,\]
 where the integral converges in the strong operator sense. Equivalently, for $f\in L^2(U,\meas)$, $G_{U}f$ is the unique element of $\domain_0(U)$ satisfying
 \[\form(G_Uf,\varphi)=\int_U f\varphi\dif \meas,\ \text{ for all }\ \varphi\in\domain_0(U).\]
 See \cite[Section~4.4]{FOT11} and \cite[(1.22)--(1.25)]{GHL15}.
 
 For the balls considered below, the \emph{Faber--Krahn inequality} available under our assumptions ensures a positive spectral gap for the part form, and hence the existence of its Green operator. The following lemma collects the uniform Green operator and harmonic regularity estimates needed for the subsequent Poisson equation estimates.

Recall that a function $h\in\domain_{\loc}(U)$ is called \emph{$\form$-harmonic} in an open set $U$ if
\[\form(h,\varphi)=0\ \text{ for all }\ \varphi\in\domain\cap C_c(U).\]
It is called \emph{$\form$-subharmonic} in $U$ if
\[\form(h,\varphi)\le0\ \text{ for all nonnegative }\\varphi\in\domain\cap C_c(U).\]

\begin{lemma}\label{l.uniform-analytic}
Let $\{(\ambient_j,\metric_{j},\meas_{j},\form^{(j)},\domain_j,\Psi_j)\}_{j\in\bN}$ be a family of MMD spaces satisfying $\VD$, $\PI{\Psi_j}$ and $\Capa{\Psi_{j}}$, all with the same constants, and suppose all $\Psi_j$ satisfy \eqref{e.psibeta} with the same constants. There are $A_0>1$, $C_0<\infty$ and $\eta\in(0,1]$, independent of $j$, such that on all balls $B_j(z,s)$ with $4A_0s<\diam(\ambient_j,\metric_{j})$:
\begin{enumerate}[label=\textup{({\roman*})}, align=right, leftmargin=*, topsep=5pt, parsep=0pt, itemsep=2pt]
\item\label{it.uniform-analytic1}
The Green operator $G_{B_j(z,s)}$ of the part form of $(\form^{(j)},\domain_j)$ on $B_j(z,s)$ obeys
\begin{equation}\label{e.poi1}
 \|G_{B_j(z,s)}f\|_{L^{\infty}(B_j(z,s),\meas_{j})} \le C_0\Psi_j(s)\|f\|_{L^{\infty}(B_j(z,s),\meas_{j})}.
\end{equation}
\item\label{it.uniform-analytic2} 
Every bounded $\form_{j}$-harmonic function $h$ on $B_j(z,s)$ satisfies
\begin{equation}\label{e.poi2}
 \|h\|_{L^\infty(B_j(z,s/2),\meas_{j})} \le C_0\meas_{j}(B_j(z,s))^{-1/2} \|h\|_{L^2(B_j(z,s),\meas_{j})}.
\end{equation}
\item\label{it.uniform-analytic3} 
Every bounded $\form_{j}$-harmonic function $h$ on $B_j(z,s)$ satisfies
\begin{equation}\label{e.poi3}
 \osc_{B_j(z,\rho)}h\le C_0\left(\frac{\rho}{s}\right)^\eta\|h\|_{L^\infty(B_j(z,s))},\ \text{ for all } \rho\in(0,s/2].
\end{equation}
\end{enumerate}
\end{lemma}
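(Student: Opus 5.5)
The plan is to import the standard consequences of $\VD$, $\PI{\Psi}$ and $\Capa{\Psi}$ from the literature and check that every constant they produce depends only on $C_{\mathrm{VD}}$, $C_{\mathrm{PI}}$, $\Lambda_{\mathrm{PI}}$, $C_{\mathrm{Cap}}$, $C_\Psi$, $\beta$ and $\beta'$. That dependence is what makes the estimates uniform in $j$. The key input is the theorem of Grigor'yan--Hu--Lau and Barlow--Bass--Kumagai, in the form of \cite{GHL15}: for an MMD space with $\VD$, $\PI{\Psi}$ and $\Capa{\Psi}$, one has the elliptic Harnack inequality, the mean exit time estimate $\mathrm{E}(\Psi)$ and two-sided heat kernel bounds $\mathrm{HK}(\Psi)$, as well as a Faber--Krahn inequality. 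The constants in all of these are controlled quantitatively by the input constants. The restriction $4A_0 s<\diam(\ambient_j,\metric_j)$ ensures that the relevant balls are nonempty and that the complement of $B_j(z,A_0 s)$ is nonempty. This is needed for the lower volume and capacity estimates, through reverse volume doubling, to apply. The constant $A_0$ is chosen as the largest dilation required by the cited results, such as $\Lambda_{\mathrm{PI}}$ or the Harnack dilation.

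For \ref{it.uniform-analytic1}, I first note that the Faber--Krahn inequality gives $\lambda_1(B_j(z,s))\ge c\Psi_j(s)^{-1}>0$, so the Green operator exists. By positivity, $|G_Bf|\le \|f\|_\infty G_B\one_B$. The function $G_B\one_B(x)$ is the mean exit time $\mathbb E_x\tau_B$, and the upper half of $\mathrm{E}(\Psi)$ gives $\sup_B G_B\one_B\le C\Psi_j(s)$. Alternatively, one can integrate the killed heat kernel bound $P^B_t\one_B\le C\exp(-ct/\Psi_j(s))$, which follows from Faber--Krahn together with the $L^2\to L^\infty$ ultracontractivity from $\mathrm{HK}(\Psi)$. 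This second route avoids any issues with quasi-everywhere versus $m$-a.e. statements.

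For \ref{it.uniform-analytic2}, I would use the $L^2$-mean value inequality for subharmonic functions, obtained by De Giorgi/Moser iteration. Its ingredients are the Caccioppoli inequality, which is built from the cutoff functions of $\Capa{\Psi}$, and the local Sobolev/Faber--Krahn inequality. Since $h$ is harmonic, $h^+$ and $h^-$ are subharmonic, so the mean value inequality applies to each and gives \eqref{e.poi2}. Equivalently, one can represent $h$ on a smaller ball through the killed semigroup and apply the $L^2\to L^\infty$ bound $\|P^B_t\|_{2\to\infty}\le C m(B(z,s))^{-1/2}$ at $t\asymp\Psi_j(s)$. I would simply cite the uniform version from \cite{GHL15}, as the mean value inequality is part of that equivalence.

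For \ref{it.uniform-analytic3}, the elliptic Harnack inequality, with constant $C_H$ and dilation $A_0$, yields oscillation decay by Moser's classical argument. Applying Harnack to $\sup h-h$ and $h-\inf h$ on nested balls $B(z,A_0^{-k}s)$ gives $\osc_{B(z,A_0^{-1}r)}h\le\theta\,\osc_{B(z,r)}h$ with $\theta=1-C_H^{-1}<1$. Iterating this gives \eqref{e.poi3} with $\eta=\log(1/\theta)/\log A_0$, where the passage from $\osc$ to $2\|h\|_\infty$ accounts for the factor. Some care is needed because $h$ is only defined $m$-a.e.; I would take the Hölder-continuous version, which is possible since the estimate holds for essential oscillation. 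I expect the main obstacle to be bookkeeping rather than a new idea. One must verify that the cited equivalences are genuinely quantitative, which in the GHL framework they are. One must also handle balls that are large relative to the diameter, which is why the hypothesis $4A_0 s<\diam$ appears. Finally, one must confirm that the Harnack inequality applies to bounded harmonic functions in $\domain_{\loc}$, not only to nonnegative ones in $\domain$, which follows by adding constants.
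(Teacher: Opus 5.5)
Your plan has the same architecture as the paper's proof, but it rests on an implication that is not in the sources you cite. The shared structure is: import quantitative consequences from the literature and check how the constants depend on the structural constants; get (ii) from subharmonicity of $h^\pm$ (the paper uses $|h|$) and the $L^2$ mean value inequality of \cite{GHL15}; get (iii) from Harnack-type oscillation decay as in \cite{BGK12}. The gap is the step you treat as bookkeeping. The results of \cite{GHL15}, Barlow--Bass--Kumagai and \cite{BGK12} that give the elliptic Harnack inequality, $\mathrm{E}(\Psi)$, $\mathrm{HK}(\Psi)$ and the local mean value inequality assume more than $\VD$ and $\PI{\Psi}$. They need a cutoff Sobolev inequality, or the generalized capacity condition of \cite{GHL15}, and not merely $\Capa{\Psi}$. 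Whether $\Capa{\Psi}$ is enough was the question left open by Grigor'yan--Hu--Lau. So attributing to \cite{GHL15} the implication from $\VD$, $\PI{\Psi}$, $\Capa{\Psi}$ to EHI, $\mathrm{E}(\Psi)$ and $\mathrm{HK}(\Psi)$ is not correct. The paper supplies this link first. By \cite[Theorem~3.3]{EB26}, the three assumptions imply a cutoff Sobolev inequality with constants depending only on the structural constants. Only after that does it apply \cite{GHL15} (Faber--Krahn and Theorem~9.4 for (i), Proposition~2.1 and Theorem~6.3 for (ii)) and \cite{BGK12} (for (iii)).

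Your description of where the Caccioppoli inequality comes from shows that this is more than a citation slip. The cutoff $\varphi$ from $\Capa{\Psi}$ only bounds its total energy, $\form(\varphi)\le C_{\mathrm{Cap}}V(x,r)/\Psi(r)$. The Moser/De Giorgi iteration for a subsolution $f$ needs something stronger: control of $\int f^2\dif\Gamma\langle\varphi\rangle$ by $\Psi(r)^{-1}\int f^2\dif\meas$ plus a small multiple of $\Gamma\langle f\rangle$. When $\beta>2$, energy measures of cutoffs are typically singular with respect to $\meas$, so this does not follow from a bound on $\form(\varphi)$. It is exactly what the cutoff Sobolev inequality provides.

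Part (i) is the least affected, since the paper obtains it from a Faber--Krahn inequality and a Green/exit-time bound. Your other steps are not: the ultracontractivity route for (i), the mean value inequality in (ii), and the elliptic Harnack inequality in (iii) are available in the cited literature only under the cutoff Sobolev (or generalized capacity) condition. Once you insert \cite[Theorem~3.3]{EB26}, the rest of your plan goes through and agrees with the paper up to minor variations. That includes positivity of $G_B$ with an exit-time bound, subharmonicity of $h^\pm$, Moser's oscillation iteration, and adding constants to pass to bounded harmonic functions.
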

\begin{proof}
By \cite[Theorem~3.3]{EB26}, the assumptions imply a \emph{cutoff Sobolev inequality} with constants depending only on the constants involved in the assumptions. 
\begin{enumerate}[label=\textup{({\roman*})}, align=right, leftmargin=*, topsep=5pt, parsep=0pt, itemsep=2pt]
  \item[\ref{it.uniform-analytic1}]
  This follows from the Faber--Krahn inequality \cite[p.~1497]{GHL15} and \cite[Theorem~9.4]{GHL15}.
  \item[\ref{it.uniform-analytic2}]
  For the local $L^2$-mean-value inequality, observe that $|h|$ is $\form_{j}$-subharmonic whenever $h$ is $\form_{j}$-harmonic, by \cite[Proposition~2.1-(2)]{GHL15}. Thus part~\ref{it.uniform-analytic2} follows from \cite[Theorem~6.3]{GHL15}. 
  \item[\ref{it.uniform-analytic3}]
  Part~\ref{it.uniform-analytic3} follows from \cite[Theorem~3.1 and Corollary~4.2]{BGK12}.
\end{enumerate}
The constants involved in the above conclusions depend only on the constants in $\VD$, $\PI{\Psi_{j}}$, $\Capa{\Psi_{j}}$ and scale constants. Thus the constants involved in the conclusions are independent of $j$.
\end{proof}

\begin{lemma}\label{l.pois}
Under the hypotheses of Lemma~\ref{l.uniform-analytic}, let $R>0$ and $x_j\in \ambient_j$ satisfy $16A_0R<\diam(\ambient_j,\metric_{j})$, and write $B_j=B_j(x_j,4R)$. Suppose $u_j\in\domain_{j,\loc}(B_j)\cap L^\infty(B_j,\meas_{j})$ and $g_j\in L^\infty(B_j,\meas_{j})$ satisfy
\begin{equation}\label{e.Lpoi1}
\form^{(j)}(u_j,\varphi) =-\int_{B_j}g_j\varphi\dif \meas_{j},\ \text{ for all } \varphi\in\domain_j\cap C_c(B_j).
\end{equation}
For each $j$, set
\begin{equation}\label{e.Lpoi2}
 K_j=\meas_{j}(B_j(x_j,R))^{-1/2} \|u_j\|_{L^2(B_j,\meas_{j})} +\Psi_j(R)\|g_j\|_{L^\infty(B_j,\meas_{j})},
 \ \text{ and }\ 
 \theta=\frac{\eta\beta}{\eta+\beta}.
\end{equation}
Then $u_j$ has a unique continuous representative on $B_j$, and, with $C$ independent of $j$,
\begin{equation} \label{e.Lpoi3}
 \sup_{B_j(x_j,2R)}|u_j|\le CK_j, \qquad |u_j(y)-u_j(z)|\le CK_j\left(\frac{\metric_{j}(y,z)}R\right)^\theta, \text{ for all } y,z\in B_j(x_j,R).
\end{equation}
\end{lemma}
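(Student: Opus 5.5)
We decompose $u_j$ on a ball around each point into a Green potential, which absorbs the source $g_j$, plus a harmonic remainder, and then apply Lemma~\ref{l.uniform-analytic} to both pieces. For the sup bound, fix $z\in B_j(x_j,2R)$ and work on $U=B_j(z,2R)\subset B_j$. Since $4A_0\cdot 2R<\diam(\ambient_j,\metric_j)$, the Green operator $G_U$ exists. Set $w=-G_U(g_j\one_U)$. By \eqref{e.poi1}, $\|w\|_\infty\le C_0\Psi_j(2R)\|g_j\|_\infty\le C\Psi_j(R)\|g_j\|_\infty$, where we used the weak scaling \eqref{e.psibeta}. By construction, $\form^{(j)}(w,\varphi)=-\int g_j\varphi\dif\meas_j$ for $\varphi\in\domain_j\cap C_c(U)$. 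Comparing with \eqref{e.Lpoi1}, the function $h=u_j-w$ is bounded and $\form^{(j)}$-harmonic on $U$. Applying \eqref{e.poi2} on $U$ gives
\[
\|h\|_{L^\infty(B_j(z,R))}\le C_0\meas_j(U)^{-1/2}\|h\|_{L^2(U)}.
\]
We bound $\|h\|_{L^2(U)}\le \|u_j\|_{L^2(B_j)}+\|w\|_\infty\meas_j(U)^{1/2}$. By $\VD$, since $B_j(x_j,R)\subset B_j(z,3R)$, we have $\meas_j(U)\ge c\,\meas_j(B_j(x_j,R))$. Hence $\|u_j\|_{L^\infty(B_j(z,R))}\le CK_j$. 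Covering $B_j(x_j,2R)$ by such balls yields the first estimate in \eqref{e.Lpoi3}.

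For the H\"older bound, take $y,z\in B_j(x_j,R)$ with $\delta=\metric_j(y,z)$. If $\delta\ge R/4$, the estimate follows from the sup bound. Otherwise choose a scale $s\in[2\delta,R]$, to be optimized later, and decompose on $B_j(z,s)\subset B_j(x_j,2R)$ as before: $u_j=w_s+h_s$ with $w_s=-G_{B_j(z,s)}g_j$. Then $\|w_s\|_\infty\le C\Psi_j(s)\|g_j\|_\infty\le C(s/R)^\beta\Psi_j(R)\|g_j\|_\infty$ by \eqref{e.psibeta}. Moreover $\|h_s\|_{L^\infty(B_j(z,s))}\le \|u_j\|_{L^\infty(B_j(x_j,2R))}+\|w_s\|_\infty\le CK_j$. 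By \eqref{e.poi3},
\[
\osc_{B_j(z,\delta)}h_s\le C(\delta/s)^\eta K_j .
\]
Combining the two pieces,
\[
\osc_{B_j(z,\delta')}u_j\le CK_j\bigl[(\delta/s)^\eta+(s/R)^\beta\bigr]
\]
for $\delta'$ slightly larger than $\delta$ so that $y$ is included; this only changes constants. We balance the terms with $s=R^{\beta/(\eta+\beta)}\delta^{\eta/(\eta+\beta)}$, which lies in $[\delta,R]$. This gives $(\delta/s)^\eta=(s/R)^\beta=(\delta/R)^{\eta\beta/(\eta+\beta)}=(\delta/R)^\theta$, which is the claimed modulus.

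The essential oscillation bound at every point and every small scale also gives a continuous representative: an $L^\infty$ function whose essential oscillation on balls tends to zero uniformly has its Lebesgue-point values uniformly continuous. The representative is unique because $\meas_j$ has full support.

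The main obstacle is checking that the harmonic remainder $h$ is genuinely harmonic in the sense required by Lemma~\ref{l.uniform-analytic}. This needs two facts. First, $u_j$ restricted to the smaller ball lies in $\domain_{j,\loc}$. Second, $G_Uf$ satisfies the weak equation against all $\varphi\in\domain_j\cap C_c(U)$; this holds because $\domain_j\cap C_c(U)\subset\domain_{j,0}(U)$. Boundedness of $h$ follows since $u_j\in L^\infty$ and $w$ is bounded. The remaining points are bookkeeping: ensuring all balls satisfy the diameter condition $4A_0s<\diam$, which follows from $16A_0R<\diam$, and the uniformity of the constants in $j$.
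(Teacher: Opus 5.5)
Your proposal is correct and takes essentially the same route as the paper. Both write $u_j$ as a Green potential plus a harmonic remainder. The sup bound then follows from \eqref{e.poi1}, \eqref{e.poi2} and volume doubling; you localize on $B_j(z,2R)$ for each $z$, while the paper uses the single ball $B_j(x_j,3R)$. The H\"older modulus follows from \eqref{e.poi1} and \eqref{e.poi3} with the same balancing choice $s=R(\delta/R)^{\eta/(\eta+\beta)}$.

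One detail should be made explicit so that \eqref{e.poi3} applies on a ball containing $y$. Either note that $s/\delta=(R/\delta)^{\beta/(\eta+\beta)}>2$ when $\delta<R/4$, since $\beta>1\ge\eta$. Or, as the paper does, extend \eqref{e.poi3} trivially to $\rho\in(s/2,s]$.
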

\begin{proof}
For the $L^\infty$-estimate, let
\begin{equation}\label{e.Lpoi3+}
	 v_j=G_{B_j(x_j,3R)}(\restr{g_j}{B_j(x_j,3R)}), \ \text{ and }\  h_j=u_j+v_j.
\end{equation}
Then $h_j$ is $\form_{j}$-harmonic on $B_j(x_j,3R)$. Lemma~\ref{l.uniform-analytic}-\ref{it.uniform-analytic1} gives \[\|v_j\|_{L^{\infty}(B_j(x_j,3R),\meas_{j})}\overset{\eqref{e.poi1}}{\le} C\Psi_j(R)\|g_j\|_{L^{\infty}(B_j(x_j,3R),\meas_{j})}.\] For $y\in B_j(x_j,2R)$ apply Lemma~\ref{l.uniform-analytic}-\ref{it.uniform-analytic2} to $h_j$ in $B_j(y,R)\subset B_j(x_j,3R)$. Uniform $\VD$ gives $\meas_{j}(B_j(y,R))\ge c\,\meas_{j}(B_j(x_j,R))$. The triangle inequality yields
\begin{align}
	\|u_j\|_{L^\infty(B_j(x_j,2R),\meas_{j})}&\le \|h_j\|_{L^\infty(B_j(x_j,2R),\meas_{j})}+\|v_j\|_{L^\infty(B_j(x_j,2R),\meas_{j})}\\
	&\overset{\eqref{e.poi2}}{\leq }C \meas_{j}(B_{j})^{-1/2}\|u_j\|_{L^{2}(B_j,\meas_{j})}+\|v_j\|_{L^\infty(B_j(x_j,2R),\meas_{j})}\\
	&\overset{\eqref{e.Lpoi3+}}{\leq} CK_j.\label{e.Lpoi4}
\end{align}

Next, following \cite[Lemma~6.7]{GHH23}, fix $y\in B_j(x_j,R)$ and $0<\rho\le r\le R$. Set \[ v_{j,r}=G_{B_j(y,r)}(g_j|_{B_j(y,r)}),\qquad h_{j,r}=u_j+v_{j,r}.\] The function $h_{j,r}$ is $\form_{j}$-harmonic on $B_j(y,r)$. By Lemma~\ref{l.uniform-analytic}-\ref{it.uniform-analytic1}, \ref{it.uniform-analytic3} and the $L^\infty$-estimate in \eqref{e.Lpoi4},
\begin{equation}
\osc_{B_j(y,\rho)}u_j\le C K_j\biggl(\left(\frac\rho r\right)^\eta +\frac{\Psi_j(r)}{\Psi_j(R)}\biggr) \le C K_j\left(\left(\frac\rho r\right)^\eta +\left(\frac rR\right)^\beta\right).
\end{equation}
For $\rho\le R/2$ choose $r=R(\rho/R)^{\eta/(\eta+\beta)}$; for $\rho>R/2$ use the $L^\infty$ bound \eqref{e.Lpoi4}. The oscillation estimate \eqref{e.poi3} extends to $r/2<\rho\le r$ by the trivial oscillation bound, after enlarging the uniform constant. This gives \[\osc_{B_j(y,\rho)}u_j \le C K_j\left(\frac{\rho}{R}\right)^\theta.\] The oscillation bound gives the stated H\"older estimate and a continuous representative on $B_j(x_j,R)$. Applying the same argument on smaller balls contained in $B_j(x_j,4R)$ and using full support of $\meas_{j}$ yields the unique continuous representative on all of $B_j$. All constants are uniform because the weak scaling bounds for $\Psi_j$ have the same constants, independently of $j$. 
\end{proof}

\subsection{Selection of functions and blow-up scales}

We begin by selecting functions whose energy-measure matrix has full rank on a set of positive measure and which possess sufficient regularity for the subsequent blow-up argument. Although piecewise harmonic approximation is available in this general setting \cite[Lemma~3.8 and Proposition~3.9]{KM20}, we use a dense resolvent core of bounded functions with bounded generators. This choice allows us to exploit the Poisson regularity estimates of Lemma~\ref{l.pois} to obtain quantitative H\"older bounds, particularly well suited to the blow-up procedure. Hino's characterization of martingale dimension \cite[Proposition~2.11 and Theorem~2.13]{Hin13} then provides functions whose energy-measure matrix has full rank on a set of positive measure.

\begin{lemma}\label{l.core}
Let $(\ambient,\metric,\meas,\form,\domain)$ be a MMD space. Let $(-\gen,\Dom(-\gen))$ be its non-negative definite generator. Let $d_{\mathrm{m}}$ be its martingale dimension. Define 
\begin{equation}\label{e.sc19}
\sG:=(I-\gen)^{-1}\bigl(L^2(\ambient,\meas)\cap L^\infty(\ambient,\meas)\bigr).
\end{equation}
Then the following assertions hold.
\begin{enumerate}[label=\textup{(\arabic*)},align=right,leftmargin=*,topsep=5pt,parsep=0pt,itemsep=5pt]
\item\label{it.core1}
The set $\sG$ is a linear subspace dense in $(\domain,\form_1^{1/2})$, and
\begin{equation}\label{e.sc20}
\sG\subset\Dom(\gen)\cap L^\infty(\ambient,\meas),\qquad \gen\sG\subset L^\infty(\ambient,\meas).
\end{equation}
\item\label{it.core2}
If $(\ambient,\metric,\meas,\form,\domain)$ satisfies $\VD$, $\PI{\Psi}$ and $\Capa{\Psi}$, then every element of $\sG$ has a unique continuous representative, which is bounded and locally H\"older continuous.
\item\label{it.core3}
For every $n\in\bN$ with $n\leq d_{\mathrm{m}}$, there exist $u_1,\ldots,u_n\in\sG$ such that, if we define
\begin{equation}\label{e.sc21}
\nu:=\sum_{i=1}^n\Gamma(u_i),\ \text{ and }\ A(x):=\left(\frac{\dif\Gamma(u_i,u_j)}{\dif\nu}(x)\right)_{i,j=1}^n,\ x\in\ambient,
\end{equation}
we have $0<\nu(\ambient)<\infty$, for $\nu$-a.e. $x\in\ambient$, $A(x)$ is symmetric and positive semidefinite, and $\operatorname{tr}A(x)=1$, and
\begin{equation}\label{e.sc22}
\nu\bigl(\Sett{x\in\ambient}{\det A(x)>0}\bigr)>0.
\end{equation}
In particular, $A\in L^1(\ambient,\nu;\bR^{n\times n})$.
\end{enumerate}
\end{lemma}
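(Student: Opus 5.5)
We prove the three parts in order; the first two are essentially functional-analytic bookkeeping, and the third rests on Hino's characterization of the index.

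For \ref{it.core1}, linearity of $\sG$ is clear because $(I-\gen)^{-1}=R_1$ is linear and $L^2\cap L^\infty$ is a linear space. If $u=R_1f$ with $f\in L^2\cap L^\infty$, then $u\in\Dom(\gen)$ and $\|u\|_\infty\le\|f\|_\infty$ by the Markov property $\|\alpha R_\alpha f\|_\infty\le\|f\|_\infty$ with $\alpha=1$. Moreover $\gen u=u-f\in L^\infty$, which gives \eqref{e.sc20}.

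For density, we use the identity $\form_1(R_1f,v)=\langle f,v\rangle_{L^2}$ for $v\in\domain$. Suppose $v\in\domain$ is $\form_1$-orthogonal to $\sG$. Then $\langle f,v\rangle=0$ for all $f\in L^2\cap L^\infty$. This set is dense in $L^2$, since it contains the truncations $\one_{K}\,((-k)\vee(g\wedge k))$ over compact $K$. Hence $v=0$, and so $\sG$ is dense in $(\domain,\form_1^{1/2})$.

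For \ref{it.core2}, take $u\in\sG$. Then $\form(u,\varphi)=-\int(\gen u)\varphi\dif\meas$ for all $\varphi\in\domain\cap C_c$, and $u$ and $g=\gen u$ are bounded. We apply Lemma~\ref{l.pois} to the constant family $\ambient_j=\ambient$ with balls $B(x,4R)$ around arbitrary $x$. Here $R$ is chosen so small that $16A_0R<\diam(\ambient,\metric)$, which is possible since $\ambient$ has at least two points. The constant $K_j$ is finite, since $\|u\|_{L^2(B)}\le\|u\|_\infty V(x,4R)^{1/2}$. The lemma gives a continuous representative on each such ball, with a sup bound and a Hölder bound of exponent $\theta$. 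These representatives agree on overlaps, because $\meas$ has full support, so they glue to a unique continuous representative that is locally Hölder continuous. Boundedness follows from $\|u\|_{L^\infty}<\infty$ together with continuity and full support.

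For \ref{it.core3}, we invoke the facts recalled after \eqref{e.martingale-dimension}, namely \cite[Proposition~2.10]{Hin10} and \cite[Proposition~2.11, Theorem~2.13]{Hin13}. Since $\sG$ is a dense linear subspace by part \ref{it.core1}, for $n\le d_{\mathrm m}$ there exist $u_1,\dots,u_n\in\sG$ such that the matrix $\bigl(\dif\Gamma\langle u_i,u_k\rangle/\dif\nu_0\bigr)$ has rank $n$ on a set of positive $\nu_0$-measure, where $\nu_0$ is a minimal energy-dominant measure. Set $\nu=\sum_i\Gamma\langle u_i\rangle$. Then $\nu(\ambient)=\sum_i\form(u_i)<\infty$. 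Each $\Gamma\langle u_i,u_k\rangle\ll\nu$ by \eqref{e:Cauchy-Schwarz-energy}, so $A$ is well defined. We have $\nu\ll\nu_0$ by energy dominance, and we let $g=\dif\nu/\dif\nu_0$. Then $\nu_0$-a.e. the original matrix equals $g\,A$. Where the rank is $n$, the trace $g$ is positive, so $\det A>0$ there. The set where the rank equals $n$ has $\nu_0$-positive measure and lies inside $\{g>0\}$, so it has positive $\nu$-measure. This gives \eqref{e.sc22}, and also $\nu(\ambient)>0$.

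The remaining pointwise properties are routine. Symmetry and positive semidefiniteness of $A$ hold $\nu$-a.e. by \eqref{e:bilinear-energy} and \eqref{e:Cauchy-Schwarz-energy} applied to rational combinations $\sum_i\xi_iu_i$. The identity $\operatorname{tr}A=\dif\nu/\dif\nu=1$ holds $\nu$-a.e. Finally, positive semidefiniteness gives $|A_{ij}|\le(A_{ii}A_{jj})^{1/2}\le1$, which yields $A\in L^1(\nu)$ since $\nu$ is finite.

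The main point requiring care is the transfer of the positive-rank set from $\nu_0$ to $\nu$, and this is handled by the trace argument above.
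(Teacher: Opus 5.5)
Your proposal is correct and follows essentially the same route as the paper: $\form_1$-orthogonality for the density of $\sG$, Lemma~\ref{l.pois} applied to the constant family for continuity and local H\"older regularity, and Hino's rank characterization on the dense subspace $\sG$ combined with the trace identity $\dif\nu=(\operatorname{tr}Z)\dif\nu_0$ to transfer the full-rank set to $\nu$. The differences are cosmetic: you invoke the dense-subspace selection of $n$ functions directly instead of extracting a full-rank $n\times n$ principal minor from a larger finite family, you work with a $\sigma$-finite rather than a finite minimal energy-dominant measure, and you bound $|A_{ik}|$ by $(A_{ii}A_{kk})^{1/2}$ instead of $(A_{ii}+A_{kk})/2$.
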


\begin{proof}
\begin{enumerate}[label=\textup{({\arabic*})}, align=right, leftmargin=*, topsep=5pt, parsep=0pt, itemsep=2pt]
\item[\ref{it.core1}]
Let $\{R_{\alpha}\}_{\alpha\in(0,\infty)}$ be the resolvent of $(\form,\domain)$ on $L^{2}(\ambient,\meas)$, so that  $R_1=(I-\gen)^{-1}$. The resolvent identity and its Markov property \cite[(1.3.7), Corollary~1.3.1 and Theorem~1.4.1]{FOT11} give that $R_{1}g\in\Dom(-\gen)\subset\domain$ for all $g\in L^{2}(\ambient,\meas)$, $\norm{R_{1}g}_{L^{\infty}(\ambient,\meas)}\leq \norm{g}_{L^{\infty}(\ambient,\meas)}$ for all $g\in L^2\cap L^\infty$. By spectral calculus, $\gen R_1g=R_1g-g$, so $\|\gen R_1g\|_\infty\leq2\|g\|_\infty$ for all $g\in L^2\cap L^\infty$. Thus $\sG$ is linear and satisfies \eqref{e.sc20}. If $v\in\domain$ satisfies $v\perp\sG$ in the Hilbert space $(\domain,\form_{1})$, that is 
\begin{equation}
	0=\form_1(v,R_1g)=\langle v,g\rangle_{L^2(\ambient,\meas)},\ \text{ for every }g\in L^2(\ambient,\meas)\cap L^\infty(\ambient,\meas).
\end{equation}
Then by the density of $L^2(\ambient,\meas)\cap L^\infty(\ambient,\meas)$ in $L^2(\ambient,\meas)$, we must have $v=0$. So $\sG$ is dense in $(\domain,\form_{1})$.
\item[\ref{it.core2}] 
Let $u=(I-\gen)^{-1}g\in\sG$, where $g\in L^2(\ambient,\meas)\cap L^\infty(\ambient,\meas)$. By \ref{it.core1}, $u\in\domain$ and $u,\gen u=u-g\in L^\infty(\ambient,\meas)$. For $x\in\ambient$ and a sufficiently small $R>0$, set $B=B(x,4R)$. Then
\begin{equation}
\form(u,\varphi)=-\int_B(u-g)\varphi\dif\meas,\ \text{ for all }\ \varphi\in\domain\cap C_c(B).
\end{equation}
Lemma~\ref{l.pois} gives a continuous representative of $u$ on $B$ which is H\"older continuous on $B(x,R)$. Moreover, by the Markov property of the resolvent,
\[\|u\|_{L^\infty(\ambient,\meas)}\le \|g\|_{L^\infty(\ambient,\meas)}<\infty.\] 
\item[\ref{it.core3}]
Let $\sigma$ be a finite minimal energy-dominant measure, whose existence follows from \cite[Proposition~2.6]{Hin13}. Since $\sG$ is $\form_1$-dense, Hino's rank characterization \cite[Proposition~2.11 and Theorem~2.13]{Hin13}, applied to a countable dense subset of $\sG$, supplies $u_1,\ldots,u_n\in\sG$ such that
\[Z=\left(\frac{\dif \Gamma\langle u_i,u_k\rangle}{\dif \sigma}\right)_{i,k=1}^n\]
is positive definite on a set $S$ of positive $\sigma$-measure. This follows by choosing a full rank minor of size $n$ from the positive semidefinite matrix of a sufficiently large finite family. Set $\nu=\sum_{i=1}^{n}\Gamma(u_i)$ and $q=\operatorname{tr}Z$. Then $\dif\nu=q\dif \sigma$ and $0<\nu(S)\le\nu(\ambient)<\infty$. Moreover,
\[A=\left(\frac{\dif\Gamma\langle u_i,u_k\rangle}{\dif\nu}\right)_{i,k=1}^n=\frac{Z}{q}\qquad \nu\text{-a.e.}\]
The matrix $Z$ is symmetric positive semidefinite because it is the density of a matrix of energy measures; thus $A$ is also symmetric positive semidefinite, with $\operatorname{tr}A=1$ $\nu$-a.e. and $\det A>0$ on $S$. Finally, $|A_{ik}|\le (A_{ii}+A_{kk})/2$, so the entrywise $L^1$-norm satisfies
\[\sum_{i,k=1}^n\|A_{ik}\|_{L^1(\ambient,\nu)}\le n\int_\ambient\operatorname{tr}A\dif \nu=n\nu(\ambient)<\infty.\]
This completes the proof of \ref{it.core3}.
\end{enumerate}
\end{proof}

The next proposition constructs approximate harmonic coordinates at suitably chosen blow-up scales. Starting from functions whose energy-measure matrix has rank at least $n$, we select a point and a sequence of shrinking scales such that, after rescaling and a suitable linear transformation, the energy-measure matrix becomes asymptotically scalar. Lemma~\ref{l.diffe}, based on Hyt\"onen's differentiation theory \cite{Hyt10}, provides the required scales and the control of the reference measure relative to the energy measure. The resulting coordinate maps are uniformly bounded and H\"older continuous on appropriate balls, and their failure to be harmonic vanishes under rescaling.

This construction is inspired by Hino's argument, but avoids the use of compactness of harmonic functions, which is not available in our general setting. Instead of passing to limiting harmonic coordinates with an exactly constant energy-measure matrix, we work with approximate coordinates at finite scales, with errors vanishing in total variation. These properties will be crucial in the subsequent analysis of Euclidean image measures.

\begin{proposition}\label{p.finite-normalization}
Assume the hypotheses of Theorem~\ref{t.main}. Fix an integer $1\le n\le d_{\mathrm m}$ and set $R_*=8(1+\Lambda_{\mathrm{PI}})$. There exist $p\in \ambient$, $r_j\downarrow0$, and the rescaled metric measure Dirichlet spaces defined by 
\[\metric_{j}=r_j^{-1}d, \qquad b_j=\meas(B(p,r_j)),\qquad \meas_{j}=b_j^{-1}\meas,\qquad\form^{(j)}=\frac{\Psi(r_j)}{b_j}\form,\] 
with scale functions 
\[\Psi_j(t)=\frac{\Psi(r_jt)}{\Psi(r_j)},\ \ t\in[0,\infty),\]
and energy measures $\Gamma^{(j)}=\Psi(r_j)b_j^{-1}\Gamma$. There exist continuous maps $\mathbf H_j=(H_j^1,\ldots,H_j^n):\ambient\to\mathbb R^n$, with $H_j^i\in\domain_{\loc}(\ambient)$, normalized by
\[\fint_{B_j(p,4R_*)}H_j^i\dif \meas_{j}=0,\qquad 1\le i\le n, j \in \mathbb{N}\]
and satisfying the following properties.
\begin{enumerate}[label=\textup{(\arabic*)},leftmargin=*]
\item\label{it.finite1}
For every $j\in\mathbb N$, the rescaled metric measure Dirichlet space $(\ambient,\metric_j,\meas_{j},\form^{(j)},\domain)$ satisfies $\VD$, $\PI{\Psi_{j}}$ and $\Capa{\Psi_{j}}$, with structural constants independent of $j$.

\item\label{it.finite2}
On $U_j=B_j(p,R_*)$, the functions $H_j^1,\ldots,H_j^n$ are uniformly bounded and are uniformly H\"older continuous on $B_j(p,R_*/2)$; that is, there exist constants $C<\infty$ and $\theta>0$  such that for every $1\le i\le n$ and every $j \in \mathbb{N}$,
\begin{equation}\label{e.finite-holder}
\sup_{x\in U_j}|H_j^i(x)|\le C,\qquad |H_j^i(x)-H_j^i(y)|\le C \metric_{j}(x,y)^\theta,\quad x,y\in B_j(p,R_*/2).
\end{equation}

\item\label{it.finite3} 
For every $1\le i\le n$, the functions $\{H_{j}^{i}\}_{j\in\bN}$ are \emph{asymptotically harmonic} on $U_j$, in the sense that
\begin{equation}\label{e.finite-poisson}
\form^{(j)}(H_j^i,\varphi)=-\int_\ambient g_j^i\varphi\dif \meas_{j},\qquad\lim_{j \to \infty}\max_{1\le i\le n}\|g_j^i\|_{L^\infty(\ambient,\meas)}=0,
\end{equation}
for every $\varphi\in\domain\cap C_c(U_j)$.

\item\label{it.finite4} 
Moreover, there exist finite positive measures $\nu_j$ such that the energy-measure matrices are \emph{asymptotically scalar}:
\begin{equation}\label{e.finite-matrix}
\Gamma^{(j)}\langle H_j^i,H_j^k\rangle=\delta_{ik}\nu_j+R_j^{ik}\quad\text{on }U_j,\quad  \text{ for } 1\le i,k\le n\ \text{ and }\ j \in \mathbb{N},
\end{equation}
where $R_j^{ik}$ are signed measures on $U_{j}$ satisfying
\[\lim_{j \to \infty}\sum_{i,k=1}^n\|R_j^{ik}\|_{\mathrm{TV}(U_j)}=0.\]
The measures $\nu_j$ also satisfy
\begin{equation}\label{e.finite-masses}
\nu_j(B_j(p,1))=1 \  \mbox{for each $j \in \mathbb{N}$, and}\ \sup_j\bigl(\nu_j(B_j(p,R_*))+\meas_{j}(B_j(p,R_*))\bigr)<\infty.
\end{equation}
\end{enumerate}
\end{proposition}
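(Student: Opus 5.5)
We start from Lemma~\ref{l.core}-\ref{it.core3}, which gives $u_1,\dots,u_n\in\sG$, the finite measure $\nu=\sum_i\Gamma\langle u_i\rangle$, and the density matrix $A$ with $\operatorname{tr}A=1$ and $\det A>0$ on a set of positive $\nu$-measure. We apply Lemma~\ref{l.diffe} with $K=4R_*$ (large enough to cover $B(p,4R_*r)$) and two auxiliary measures. For part~\ref{it.diffe2} we use the matrix field $A$. For part~\ref{it.diffe3} we use $\mu=\meas$, which is locally finite, and also $\mu=\nu$ itself. Intersecting the full-measure sets with $\{\det A>0\}$, we pick $p$ such that $A(p)$ is positive definite, \eqref{e.diffe1} holds at $p$, \eqref{e.diffe2} holds for $\mu=\meas$, and $p$ lies in the good set of part~\ref{it.diffe1} for the sequence $\rho_j=2^{-j}$. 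This yields radii $r_j\in R_\nu(p)$ with $r_j\downarrow0$.

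The rescaling is then forced. Property~\ref{it.finite1} follows directly: $\VD$ is scale invariant, and for $\PI{\Psi_j}$, $\Capa{\Psi_j}$ the factors $\Psi(r_j)/b_j$ in $\form^{(j)}$ and $b_j^{-1}$ in $\meas_j$ cancel exactly against $\Psi_j(s)=\Psi(r_js)/\Psi(r_j)$ and $V_j=V/b_j$. The weak scaling constants of $\Psi_j$ coincide with those of $\Psi$.

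We next define the coordinates. Set $M=A(p)^{-1/2}$ and
\[
\tilde H_j^i=\lambda_j\sum_k M_{ik}u_k,\qquad \lambda_j^2=\frac{b_j}{\Psi(r_j)\,\nu(B(p,r_j))}.
\]
Then set $H_j^i=\tilde H_j^i-\fint_{B_j(p,4R_*)}\tilde H_j^i\dif\meas_j$ and $\nu_j=\lambda_j^2\Psi(r_j)b_j^{-1}\nu=\nu/\nu(B(p,r_j))$. With these choices $\nu_j(B_j(p,1))=1$. Moreover $\Gamma^{(j)}\langle H_j^i,H_j^k\rangle=(MA(y)M)_{ik}\,\nu_j$, so the remainder is $R_j^{ik}=(M(A(y)-A(p))M)_{ik}\,\nu_j$. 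Its total variation on $U_j$ is at most $C_M\int_{B(p,R_*r_j)}\|A(p)-A\|\dif\nu/\nu(B(p,r_j))$, which tends to $0$ by \eqref{e.diffe1}. The mass bounds \eqref{e.finite-masses} follow from $r_j\in R_\nu(p)$, which gives $\nu(B(p,5Kr_j))\le D\nu(B(p,r_j))$, and from \eqref{e.diffe2} applied to $\meas$, which gives $\meas_j(B_j(p,R_*))\le C\,b_j^{-1}\meas(B(p,r_j))\cdot(\text{doubling})$, a quantity that is simply bounded by $\VD$. Property~\ref{it.finite3} follows from $\form^{(j)}(H_j^i,\varphi)=\lambda_j\Psi(r_j)b_j^{-1}\form(\sum_k M_{ik}u_k,\varphi)$, so that $g_j^i=\lambda_j\Psi(r_j)\sum_kM_{ik}\gen u_k$. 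Since $\gen u_k\in L^\infty$, we get $\|g_j^i\|_\infty\le C\lambda_j\Psi(r_j)=C\bigl(\Psi(r_j)b_j/\nu(B(p,r_j))\bigr)^{1/2}$.

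The main obstacle is showing that this last quantity tends to $0$ and that the $H_j^i$ are uniformly bounded. Both reduce to the comparison
\[
\nu(B(p,r_j))\gtrsim \frac{b_j}{\Psi(r_j)},
\]
in fact with room to spare. This is where \eqref{e.diffe2} with $\mu=\meas$ enters: it gives $b_j\le\meas(B(p,Kr_j))\le C\nu(B(p,r_j))$. Hence $\lambda_j^2\Psi(r_j)^2=\Psi(r_j)b_j/\nu(B(p,r_j))\le C\Psi(r_j)\to0$. For the boundedness and Hölder regularity in \ref{it.finite2}, we apply Lemma~\ref{l.pois} in the rescaled space on $B_j(p,4R_*)$ (with $R=R_*$, after adjusting the centres), using the uniform constants from \ref{it.finite1}. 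This reduces the problem to bounding
\[
K_j=\meas_j(B_j(p,R_*))^{-1/2}\|H_j^i\|_{L^2(B_j(p,4R_*),\meas_j)}+\Psi_j(R_*)\|g_j^i\|_\infty .
\]
The second term tends to $0$. For the first we use the rescaled Poincaré inequality $\PI{\Psi_j}$ with the mean-zero normalization, which gives $\|H_j^i\|_{L^2}^2\le C\Psi_j(4R_*)\,\Gamma^{(j)}\langle H_j^i\rangle(B_j(p,4R_*\Lambda_{\mathrm{PI}}))\le C\nu_j(B_j(p,K))$. This is bounded by the doubling property of $\nu$ along $R_\nu(p)$, since $K\ge 4R_*\Lambda_{\mathrm{PI}}$. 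This is why $K$ must be chosen large relative to $R_*$. Finally, $\meas_j(B_j(p,R_*))\ge1$, and diameter issues are avoided because $\diam(\ambient,\metric_j)=r_j^{-1}\diam\ambient\to\infty$, after discarding finitely many $j$.
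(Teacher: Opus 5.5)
Your proposal is correct and follows essentially the paper's proof. It uses the same $u_i$ from Lemma~\ref{l.core}, the same application of Lemma~\ref{l.diffe} to $\nu$, $A$ and $\meas$ (giving $b_j\le C\nu(B(p,r_j))$ and hence $\|g_j^i\|_\infty\lesssim\Psi(r_j)^{1/2}$), the same coordinates $\lambda_j A(p)^{-1/2}(\mathbf u-\mathbf c_j)$ with $\lambda_j=a_j^{-1}$, and the same Poincar\'e plus Lemma~\ref{l.pois} argument for the uniform bound and H\"older estimate.

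The only slip is your initial choice $K=4R_*$. It does not guarantee $\nu(B(p,4R_*\Lambda_{\mathrm{PI}}r_j))\le C\nu(B(p,r_j))$ when $\Lambda_{\mathrm{PI}}>5$. As you note yourself at the end, $K$ must be comparable to $R_*\Lambda_{\mathrm{PI}}$; the paper takes $K=5R_*\Lambda_{\mathrm{PI}}$.
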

\begin{proof}
By Lemma~\ref{l.core}-\ref{it.core3}, choose $u_1,\ldots,u_n\in\Dom(\gen)\cap C(\ambient)\cap L^\infty(\ambient,\meas)$ with $\gen u_i\in L^\infty(\ambient,\meas)$ such that, on setting
\begin{equation}\label{e.bl02}
	\nu:=\sum_{i=1}^n\Gamma\langle u_i\rangle ,\qquad A(x):=\left(\frac{\dif\Gamma\langle u_i,u_k\rangle}{\dif\nu}(x)\right)_{1\le i,k\le n},\ x\in\ambient,
\end{equation}
we have $0<\nu(\ambient)<\infty$ and $\nu(\{\det A>0\})>0$.
	
Set $K=5R_*\Lambda_{\mathrm{PI}}$ and let $D>1$ be as in Lemma~\ref{l.diffe}. Applying Lemma~\ref{l.diffe} to $\nu,m,A$ and the prescribed sequence $\rho_j=2^{-j}$, we may choose $p\in\supp\nu$ with $A_0:=A(p)$ positive definite, a constant $C<\infty$, and a subsequence $r_j\downarrow0$ such that
\begin{equation}\label{e.bl-7}
	\nu(B(p,5Kr_j))\le D\nu(B(p,r_j)),\qquad \meas(B(p,Kr_j))\le C\nu(B(p,r_j)),
\end{equation}
and
\begin{equation}\label{e.bl-6}
	\lim_{j\to\infty}\frac{1}{\nu(B(p,r_j))}\int_{B(p,Kr_j)}\|A-A_0\|_1\dif \nu=0.
\end{equation}
Here we have discarded finitely many terms and relabeled the sequence, if necessary.
	
Set $Q:=A_0^{-1/2}$ and $\mathbf u:=(u_1,\ldots,u_n)^{\mathsf T}$. Define
\begin{equation}\label{e.bl9}
b_j:=\meas(B(p,r_j)),\qquad e_j:=\nu(B(p,r_j)),\qquad a_j^2:=\frac{\Psi(r_j)e_j}{b_j},\qquad \nu_j:=e_j^{-1}\nu.
\end{equation}
Since $p\in\supp\nu$, we have $e_j>0$. Let
\[	\mathbf c_j:=\fint_{B(p,4R_*r_j)}\mathbf u\dif \meas,\qquad
  	\mathbf H_j:=a_j^{-1}Q(\mathbf u-\mathbf c_j),\qquad
	\mathbf g_j=\frac{\Psi(r_j)}{a_j}Q\gen\mathbf u,\]
where $\gen\mathbf u=(\gen u_1,\ldots, \gen u_n)^{\mathsf T}$.
	
By regularity and strong locality, $H_j^i\in\domain_{\loc}(\ambient)$. The functions $H_j^i$ are continuous, and their normalization follows directly from the definition of $\mathbf c_j$:
\[\fint_{B_j(p,4R_*)}\mathbf H_j\dif \meas_{j}=a_j^{-1}Q\fint_{B(p,4R_*r_j)}(\mathbf u-\mathbf c_j)\dif \meas=0.\]
	
We verify the four assertions.
	
\begin{enumerate}[label=\textup{(\arabic*)},leftmargin=*]
\item[\ref{it.finite1}]
Since $\metric_{j}=r_j^{-1}d$, $\meas_{j}=b_j^{-1}m$ and $\form^{(j)}=\Psi(r_j)b_j^{-1}\form$, the MMD space $(\ambient,\metric_j,\meas_{j},\form^{(j)},\domain)$ satisfies $\VD$, $\PI{\Psi_{j}}$ and $\Capa{\Psi_{j}}$ with the same structural constants as the original space. The scale functions $\Psi_j$ also satisfy \eqref{e.psibeta} with the same constants.
		
\item[\ref{it.finite2}]
We first establish a uniform local $L^2$-bound for $\mathbf H_j$. By the Poincar\'e inequality $\PI{\Psi_{j}}$, the definition of $\mathbf c_j$ and the boundedness of the fixed matrix $Q$,
\begin{align}
\int_{B_j(p,4R_*)}|\mathbf H_j|^2\dif \meas_{j}
&\le\frac{\|Q\|_{\mathrm{op}}^2}{\Psi(r_j)e_j}\sum_{i=1}^n\int_{B(p,4R_*r_j)}|u_i-c_j^i|^2\dif \meas\\
&\overset{\PI{\Psi_{j}}}{\le}C\frac{\Psi(4R_*r_j)}{\Psi(r_j)e_j}\nu(B(p,4R_*\Lambda_{\mathrm{PI}}r_j))\\
&\overset{\eqref{e.psibeta}}{\le} C_{R_*}\frac{\nu(B(p,Kr_j))}{e_j}\overset{\eqref{e.bl-7}}{\le} C_{R_*}.
		\end{align}
Here we used $4R_*\Lambda_{\mathrm{PI}}<K$, the weak scaling \eqref{e.psibeta} of $\Psi$, and \eqref{e.bl-7}. Consequently,
\begin{equation}\label{e.bl11}
\sup_{j\in\bN}\|\mathbf H_j\|_{L^2(B_j(p,4R_*),\meas_{j};\mathbb R^n)}<\infty.
\end{equation}
		
Next, \eqref{e.bl-7} gives $b_j\le Ce_j$. Therefore,
\[\frac{\Psi(r_j)}{a_j}=\left(\frac{\Psi(r_j)b_j}{e_j}\right)^{1/2}\le C\Psi(r_j)^{1/2}.\]
Since $Q=A_0^{-1/2}$ is fixed and $\gen u_i\in L^\infty(\ambient,\meas)$,
\begin{equation}\label{e.finite-g}
\max_{1\leq i\leq n}\|g_j^i\|_{L^\infty(\ambient,\meas)}\le C\Psi(r_j)^{1/2}\max_{1\leq i\leq n}\|\gen u_i\|_{L^\infty(\ambient,\meas)}\to0,\ \text{ as } j\to\infty.
\end{equation}
		
We apply Lemma~\ref{l.pois} with center $p$, radius $R=R_*/2$, and $u=H_j^i$, $g=g_j^i$. The required radius condition holds for all sufficiently large $j$, since $r_j\downarrow0$. Each $H_j^i$ is bounded for fixed $j$, so the hypotheses of that lemma are satisfied.
		
Since $R_*/2>1$ and $\meas_{j}(B_j(p,1))=1$, we have
\[\meas_{j}(B_j(p,R_*/2))\ge1,\ \text{ and }\ \sup_{j\in\bN}\Psi_j(R_*/2)<\infty.\]
Using these bounds together with \eqref{e.bl11} and \eqref{e.finite-g}, Lemma~\ref{l.pois} gives
\[\sup_{x\in U_j}|H_j^i(x)|\le C,\ \text{ and }\ |H_j^i(x)-H_j^i(y)|\le C\metric_{j}(x,y)^\theta,\quad x,y\in B_j(p,R_*/2),\]
where $C$ and $\theta>0$ are independent of $j$. This establishes \eqref{e.finite-holder}.
		
\item[\ref{it.finite3}]
Since $\mathbf c_j$ is constant and the form is strongly local, \eqref{e.generator-def} gives, for every $\varphi\in\domain\cap C_c(U_j)$,
\begin{equation}\label{e.bl12}
\form^{(j)}(H_j^i,\varphi)=-\int_\ambient g_j^i\varphi\dif \meas_{j}.
\end{equation}
Indeed, this follows from $\form(u_k,\varphi)=-\int_\ambient\gen u_k\,\varphi\dif \meas$ and the definitions of $\form^{(j)}$, $\mathbf H_j$ and $\mathbf g_j$.
		
The convergence $\max_i\|g_j^i\|_\infty\to0$ was established in \eqref{e.finite-g}. Thus \eqref{e.finite-poisson} holds.
		
\item[\ref{it.finite4}]
We first verify the mass bounds. By \eqref{e.bl-7} and the definition of $\nu_j$, we have $\nu_j(B_j(p,1))=1$ and $\nu_j(B_j(p,R_*))\le D$. Moreover, $\VD$ gives
\[\meas_{j}(B_j(p,R_*))=\frac{\meas(B(p,R_*r_j))}{\meas(B(p,r_j))}\le C_{R_*}.\]
This proves \eqref{e.finite-masses}.
		
It remains to establish the energy-measure identity. By strong locality and the normalization of the rescaled form,
\[\Gamma^{(j)}\langle H_j^i,H_j^k\rangle=\sum_{a,b=1}^n Q_{ia}Q_{kb}A_{ab}\,\nu_j.\]
Since $QA_0Q^{\mathsf T}=I_n$, we obtain
\[\Gamma^{(j)}\langle H_j^i,H_j^k\rangle=\delta_{ik}\nu_j+R_j^{ik},\]
where
\[R_j^{ik}=\sum_{a,b=1}^n Q_{ia}Q_{kb}(A_{ab}-A_{0,ab})\,\nu_j.\]
Since $U_j\subset B_j(p,K)$, equation~\eqref{e.bl-6} implies
\[\sum_{i,k=1}^n\|R_j^{ik}\|_{\mathrm{TV}(U_j)}\le\frac{C_Q}{e_j}\int_{B(p,Kr_j)}\|A-A_0\|\dif \nu\overset{\eqref{e.bl-6}}{\to}0,\ \text{ as }j\to\infty.\]
This proves \eqref{e.finite-matrix} and completes part~\ref{it.finite4}.
\end{enumerate}
\end{proof}

\section{Analysis of Euclidean image measures}\label{s.image}

We next construct cutoff functions with uniform energy bounds on the rescaled spaces. The cutoffs $\zeta_j$ are used to localize energy measures and control the boundary terms in the integration-by-parts argument of Lemma~\ref{l.euclidean-image}-\ref{it.image2}, while $\chi_j$ are used in the Caccioppoli argument of Lemma~\ref{l.image-poincare}-\ref{it.image-poincare1} to exclude concentration of the limiting reference measure at a point. The construction relies on the uniform capacity upper bounds and volume doubling of the rescaled spaces in Proposition~\ref{p.finite-normalization}-\ref{it.finite1}.

\begin{lemma}\label{l.uniform-cutoff}
Assume the hypotheses and retain the notation of Proposition~\ref{p.finite-normalization}. In particular, let $U_j=B_j(p,R_*)$, where $R_*=8(1+\Lambda_{\mathrm{PI}})$, and recall that the rescaled spaces $(\ambient,\metric_j,\meas_{j},\form^{(j)},\domain)$ satisfy $\VD$, $\PI{\Psi_{j}}$ and $\Capa{\Psi_{j}}$ with constants independent of $j$.

\begin{enumerate}[label=\textup{(\arabic*)},leftmargin=*]

\item\label{it.cutoff1}
There exist $\zeta_j\in\domain_{j,0}(U_j)\cap C_c(U_j)$ such that
\[0\le\zeta_j\le1,\qquad \zeta_j=1\ \text{on }B_j(p,R_*/4),\qquad\supp\zeta_j\subset B_j(p,R_*/2),\]
and
\begin{equation}\label{e.finite-cutoff}
\sup_{j\in\bN}\bigl(\form^{(j)}(\zeta_j)+\|\zeta_j\|_{L^2(\ambient,\meas_{j})}^2\bigr)<\infty.
\end{equation}
		
\item\label{it.cutoff2}
There exist $\chi_j\in\domain_{j,0}(B_j(p,3/2)) \cap C_c(B_j(p,3/2))$ such that
\[0\le\chi_j\le1,\qquad\chi_j=1\ \text{on }B_j(p,1),\qquad\sup_{j\in\bN}\form^{(j)}(\chi_j)<\infty.\]
\end{enumerate}
\end{lemma}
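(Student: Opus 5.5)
Both parts come from applying the capacity upper bound $\Capa{\Psi_j}$ of the rescaled spaces, which by Proposition~\ref{p.finite-normalization}-\ref{it.finite1} holds with a constant $C_{\mathrm{Cap}}$ independent of $j$. We then control the right-hand side $C_{\mathrm{Cap}}\meas_j(B_j(p,r))/\Psi_j(r)$ uniformly in $j$, using uniform $\VD$, the normalization $\meas_j(B_j(p,1))=1$, and the weak scaling \eqref{e.psibeta} of $\Psi_j$ with constants independent of $j$.

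For part~\ref{it.cutoff1}, we apply $\Capa{\Psi_j}$ to the ball $B_j(p,R_*/4)$. This gives $\zeta_j\in\domain\cap C_c(\ambient)$ with $0\le\zeta_j\le1$, $\zeta_j=1$ on $B_j(p,R_*/4)$ and $\supp\zeta_j\subset B_j(p,R_*/2)$. Since $\overline{B_j(p,R_*/2)}$ need not be compact in $U_j$ in general, and the support of a $C_c$ function is compact, we get $\supp\zeta_j\subset B_j(p,R_*/2)\Subset U_j$; hence $\zeta_j\in C_c(U_j)$. As $\zeta_j$ is continuous and vanishes outside $U_j$, it lies in $\domain_{j,0}(U_j)$ (quasi-continuity is automatic for continuous functions). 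For the energy bound,
\[
\form^{(j)}(\zeta_j)\le C_{\mathrm{Cap}}\frac{\meas_j(B_j(p,R_*/4))}{\Psi_j(R_*/4)}.
\]
By \eqref{e.VD-polynomial}, $\meas_j(B_j(p,R_*/4))\le C R_*^{\alpha}\meas_j(B_j(p,1))=CR_*^\alpha$. By \eqref{e.psibeta} with $\Psi_j(1)=1$, we have $\Psi_j(R_*/4)\ge C_\Psi^{-1}(R_*/4)^{\beta}$, since $R_*/4\ge2>1$. For the $L^2$ term, $\|\zeta_j\|_{L^2(\meas_j)}^2\le\meas_j(B_j(p,R_*/2))\le C_{R_*}$, again by uniform $\VD$.

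For part~\ref{it.cutoff2}, the only subtlety is that $\Capa{\Psi_j}$ applied to $B_j(p,1)$ yields support in $B_j(p,2)$, not in $B_j(p,3/2)$. I would handle this with a covering argument. Cover $B_j(p,1)$ by finitely many balls $B_j(y_l,1/8)$ with $y_l\in B_j(p,1)$. By $\MD$, with the constant uniform in $j$ since the rescaled spaces are $\VD$ with common constants, the number of such balls is at most some $M$ independent of $j$. For each $l$, $\Capa{\Psi_j}$ gives $\varphi_l$ equal to $1$ on $B_j(y_l,1/8)$ and supported in $B_j(y_l,1/4)\subset B_j(p,5/4)$, with
\[
\form^{(j)}(\varphi_l)\le C_{\mathrm{Cap}}\,\meas_j(B_j(y_l,1/8))/\Psi_j(1/8).
\]
This is uniformly bounded, because $\meas_j(B_j(y_l,1/8))\le\meas_j(B_j(p,2))\le C$ and $\Psi_j(1/8)\ge C_\Psi^{-1}8^{-\beta'}$ by \eqref{e.psibeta}.

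Set $\chi_j:=\max_l\varphi_l$, or alternatively $\chi_j:=1\wedge\sum_l\varphi_l$. Then $0\le\chi_j\le1$, $\chi_j=1$ on $B_j(p,1)$, and $\chi_j$ is continuous with compact support in $B_j(p,5/4)$, hence in $C_c(B_j(p,3/2))$ and in $\domain_{j,0}(B_j(p,3/2))$. For the energy, the Markov property gives $\form^{(j)}(1\wedge f)\le\form^{(j)}(f)$ for $f\ge0$. Combined with the triangle inequality for $\form^{(j)}(\cdot)^{1/2}$, this yields
\[
\form^{(j)}(\chi_j)\le\Bigl(\sum_l\form^{(j)}(\varphi_l)^{1/2}\Bigr)^2\le M^2\max_l\form^{(j)}(\varphi_l),
\]
which is uniformly bounded.

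The main step requiring care is this uniform-in-$j$ counting in part~\ref{it.cutoff2}. It follows from the uniform $\VD$ constants via \eqref{e.VD-polynomial}, since disjoint balls $B_j(y_l,1/16)$ inside $B_j(p,2)$ each carry a fixed proportion of $\meas_j(B_j(p,2))$. The remaining steps are routine.
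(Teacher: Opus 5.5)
Your proposal is correct and follows the paper's proof essentially step for step. Part (1) applies $\Capa{\Psi_j}$ to $B_j(p,R_*/4)$ and bounds the energy and $L^2$ terms via uniform volume doubling, the normalization $\meas_j(B_j(p,1))=1$ and weak scaling. Part (2) covers $B_j(p,1)$ by a uniformly bounded number of radius-$1/8$ balls, applies the capacity bound to each, and combines the resulting cutoffs.

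The only difference is in that last step. Your variant $1\wedge\sum_l\varphi_l$, with the energy controlled by the Markov property and the triangle inequality for $\form^{(j)}(\cdot)^{1/2}$, is an equally valid substitute for the paper's maximum $\max_l\varphi_l$, which the paper controls by $\form^{(j)}(\max_l\varphi_l)\le\sum_l\form^{(j)}(\varphi_l)$.
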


\begin{proof}
\begin{enumerate}[label=\textup{({\arabic*})}, align=right, leftmargin=*, topsep=5pt, parsep=0pt, itemsep=2pt]
\item[\ref{it.cutoff1}]	
By Proposition~\ref{p.finite-normalization}-\ref{it.finite1}, the rescaled MMD spaces $(\ambient,\metric_j,\meas_{j},\form^{(j)},\domain)$ satisfy $\Capa{\Psi_{j}}$ with uniform structural constants. Applying the $\Capa{\Psi_{j}}$ to $B_j(p,R_*/4)$ gives $\zeta_j$ with the required support and plateau properties, and
\[\form^{(j)}(\zeta_j)\le C_{\mathrm{Cap}}\frac{\meas_{j}(B_j(p,R_*/4))}{\Psi_j(R_*/4)}\le C.\]
Here the last inequality follows from $\VD$, $\meas_{j}(B_j(p,1))=1$, and the uniform weak scaling of $\Psi_j$ by \eqref{e.psibeta}. Moreover,
\[\|\zeta_j\|_{L^2(\ambient,\meas_{j})}^2\le \meas_{j}(B_j(p,R_*/2))\le C.\]
This proves part~\ref{it.cutoff1}.

\item[\ref{it.cutoff2}]
Choose a maximal $1/8$-separated set $\{z_{j,1},\ldots,z_{j,N_j}\}$ in $B_j(p,1)$. By $\VD$, $N_j$ is uniformly bounded. The balls $B_j(z_{j,k},1/8)$ cover $B_j(p,1)$. For each $k$, the capacity upper bound $\Capa{\Psi_{j}}$ provides a cutoff function $\varphi_{j,k}\in\domain\cap C_{c}(\ambient)$ equal to one on $B_j(z_{j,k},1/8)$, supported in $B_j(z_{j,k},1/4)$, and satisfying
\[\form^{(j)}(\varphi_{j,k})\le C\frac{\meas_{j}(B_j(z_{j,k},1/8))}{\Psi_j(1/8)}\le C.\]
The constants are independent of $j$ and $k$. Set $\chi_j:=\max_{1\le k\le N_j} \varphi_{j,k}$. Then $\chi_j=1$ on $B_j(p,1)$ and
\[\supp\chi_j\subset B_j(p,5/4)\Subset B_j(p,3/2).\]
By the contraction property of Dirichlet forms \cite[Chapter~I, Section~2, Exercise~2.4]{BH91},
\[\form^{(j)}(\chi_j)\le\sum_{k=1}^{N_j}\form^{(j)}(\varphi_{j,k})\le C.\]
This completes the proof.
\end{enumerate}
\end{proof}

We recall the definition of the Euclidean $(1,2)$-capacity used below, following \cite[Section~3]{Hin13}. For an open set $U\subset\mathbb R^n$, let
\[\Capacity^{1,2}(U):=\inf\Sett{\int_{\mathbb R^n}\bigl(|u|^2+|\nabla u|^2\bigr)\dif \sL^{n}}{u\in H^1(\mathbb R^n),\ u\ge 1\ \text{a.e.\ on }U}.\]
For an arbitrary set $A\subset\mathbb R^n$, define
\[\Capacity^{1,2}(A):=\inf\left\{\Capacity^{1,2}(U):A\subset U,\ U\text{ open}\right\}.\]
We collect the following Euclidean capacity results from Hino's work \cite[Section~3]{Hin13}.

\begin{proposition}[\!\!{\cite{Hin13}}]\label{p.CapE}  
Let $n\in[2,\infty)\cap\mathbb N$. Let $\rho:\mathbb R^n\to[0,\infty)$ be such that $\rho^{1/2}\in H^1(\mathbb R^n)$. Let $\sigma$ be a Radon measure on $\bR^{n}$ such that $0<\sigma(\bR^{n})<\infty$. 
\begin{enumerate}[label=\textup{({\arabic*})}, align=right, leftmargin=*, topsep=5pt, parsep=0pt, itemsep=2pt]

\item\label{it.Cap1} 
For every compact set $K\subset\bR^{n}$ with $\Capacity^{1,2}(K)=0$, and for every $\epsilon\in(0,\infty)$, there exists $v\in C_{c}^{\infty}(\bR^{n})$ such that $0\leq v\leq1$, $v=1$ on an open neighbourhood of $K$, and
\begin{equation}\label{e.Cap1}
\int_{\bR^{n}}(\abs{\nabla v}^{2}+v^{2})(1+\rho)\dif\sL^{n}<\epsilon.
\end{equation}

\item\label{it.Cap2} 
There exists a $G_{\delta}$ set $N\subset \bR^{n}$ with $\Capacity^{1,2}(N)=0$ such that 
\begin{equation}\label{e.Cap2}
	\sup_{r\in(0,\infty)}r^{-n}\int_{B(y,r)}\rho(z)\dif\sL^{n}(z)<\infty,\ \text{ for all }y\in \bR^{n}\setminus N,
\end{equation}
and 
\begin{equation}\label{e.Cap3}
	\limsup_{r\downarrow 0}r^{2-n}\sigma(B(y,r))=0,\ \text{ for all }y\in \bR^{n}\setminus N.
\end{equation}
		
\item\label{it.Cap3} 
There exists a Borel set $N_{1}\subset\bR^{n}$ with $\sigma(N_{1})=0$ and 
\begin{equation}\label{e.Cap4}
	\liminf_{r\downarrow0}r^{-n}\sigma(B(y,r))>0,\ \text{ for all }y\in \bR^{n}\setminus N_{1}.
\end{equation}
\end{enumerate}
\end{proposition}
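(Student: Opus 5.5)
The plan is to treat the three parts of Proposition~\ref{p.CapE} separately, using only classical Euclidean potential theory for $H^1(\mathbb R^n)$. In each part the weight $\rho$ enters through the fact that $\rho^{1/2}\in H^1$, so $\rho\in L^1\cap L^{n/(n-2)}$ (with $\rho\in L^p$ for all finite $p$ when $n=2$).

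\textbf{Part~\ref{it.Cap1}.} Since $\Capacity^{1,2}(K)=0$, there are $w_k\in C_c^\infty$ with $0\le w_k\le1$, $w_k=1$ near $K$, and $\|w_k\|_{H^1}\to0$. The weighted term $\int(|\nabla w|^2+w^2)\rho$ is not controlled by the $H^1$ norm alone, so we need more than these functions. My first choice is a truncated-logarithm (Hedberg-type) construction, set up as follows:
\begin{itemize}
\item Fix a level $t$ and put $v=\Phi(w)$, where $\Phi(s)=\log(1+s/t)/\log(1+1/t)$, cut off and smoothed. Then $|\nabla v|^2\lesssim |\nabla w|^2/\bigl((t+w)^2\log^2(1/t)\bigr)$.
\item Bound the weighted gradient term using $\rho\le 2(\rho^{1/2}-c)^2+2c^2$ locally.
\item Alternatively, note that it suffices to show $\Capacity_\rho(K)=0$ for the weighted capacity with weight $1+\rho$.
\end{itemize}
The cleanest route is probably the one in Hino's paper. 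Since $\rho^{1/2}\in H^1$, the operator $u\mapsto u\rho^{1/2}$ is controlled: for $v\in C_c^\infty$, $\int|\nabla v|^2\rho$ can be handled with a Hardy/Sobolev estimate on $\nabla(v\rho^{1/2})$ only after an integration by parts. I expect this step, controlling $\int|\nabla v|^2\rho$ for capacity-small $v$, to be the main obstacle. I would try to use the quasi-continuity of $\rho^{1/2}$ together with the fact that polar sets are invisible to $H^1$ functions. Concretely, split $K$ into the part where the quasi-continuous $\rho^{1/2}$ is bounded by $M$, which is compact up to a small-capacity open set, and the rest. On the first piece, capacitary functions supported in $\{\tilde\rho^{1/2}<2M\}$ give a weighted integral of at most $4M^2\|v\|_{H^1}^2$. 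The second piece has small capacity by the capacitary weak-type inequality.

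\textbf{Part~\ref{it.Cap2}.} For \eqref{e.Cap2}, Poincaré/Sobolev for $\rho^{1/2}\in H^1$ gives Lebesgue points of $\rho^{1/2}$ outside a capacity-zero $G_\delta$ set. At such points the averages of $\rho^{1/2}$ stay bounded, and the averages of $\rho$ are bounded by $(\fint|\rho^{1/2}-(\rho^{1/2})_B|^2+(\rho^{1/2})_B^2)$. The first term is $\lesssim r^{2-n}\int_B|\nabla\rho^{1/2}|^2$, and this vanishes as $r\to0$ outside a capacity-zero set by the standard fact that $\limsup r^{2-n}\mu(B(y,r))=0$ off a $\Capacity^{1,2}$-null set whenever $\mu$ is finite. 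That same fact, applied to the finite measure $\sigma$, gives \eqref{e.Cap3}. Its proof is by a covering argument: the set where $\limsup>\epsilon$ is covered by balls whose capacities sum to $\lesssim \epsilon^{-1}\mu(\text{nbhd})$, with Hausdorff content $\mathcal H^{n-2}_\infty$ dominating capacity. Large $r$ is trivial by finiteness, and we take $N$ as a countable intersection of open sets.

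\textbf{Part~\ref{it.Cap3}.} This is the lower density theorem. Let $N_1=\{y:\liminf r^{-n}\sigma(B(y,r))=0\}$. By Vitali/Besicovitch covering, $\sigma(N_1\cap B)\le \epsilon\,\mathcal L^n$ of a bounded neighbourhood for every $\epsilon$, so $\sigma(N_1)=0$. Borel measurability follows by restricting to rational radii.
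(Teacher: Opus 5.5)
The paper does not reprove this proposition. It cites Hino \cite{Hin13}: Proposition~3.10 for (1), Lemmas~3.9 and~3.12 for (2), and Lemma~3.11 for (3). Your arguments for (3) (Besicovitch covering) and for \eqref{e.Cap2} (quasi-everywhere Lebesgue points of $\rho^{1/2}$ plus Poincar\'e) are fine. In \eqref{e.Cap2} the covering closes because the exceptional set is Lebesgue-null, hence null for $|\nabla\rho^{1/2}|^2\dif\sL^{n}$. Two other steps do not go through.

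\textbf{Part (1).} Splitting $K$ by the values of the quasi-continuous representative $\widetilde{\rho^{1/2}}$ carries no information.
\begin{itemize}
\item Such representatives are determined only up to sets of zero capacity. Since $\Capacity^{1,2}(K)=0$, their values on $K$ can be changed arbitrarily, so nothing known quasi-everywhere about $\rho$ says anything at points of $K$.
\item ``The second piece has small capacity'' is vacuous, because every subset of $K$ already has capacity zero. It is also not what is needed: the difficulty is the \emph{weighted} energy $\int|\nabla v|^2\rho$ near points where $\rho$ is unbounded, and the capacitary weak-type inequality is an unweighted statement.
\item The set $\{\widetilde{\rho^{1/2}}<2M\}$ is only quasi-open. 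It need not contain any open set on which $\rho\le 4M^2$ a.e.; take $\rho^{1/2}=\sum_i 2^{-i}\phi(\cdot-q_i)$ with $\phi\in H^1$ essentially unbounded near $0$ and $\{q_i\}$ dense. So there are no smooth capacitary functions supported there, and the bound $4M^2\|v\|_{H^1}^2$ has no basis.
\end{itemize}
The step you flagged as the main obstacle therefore remains open. The large-$\rho$ region has to be handled on the side of the test functions, not by partitioning $K$. For instance, the truncations $g_M=\min\{1,(\rho^{1/2}-M)^+/M\}$ satisfy, for $M\ge 1$,
\[
\int(|\nabla g_M|^2+g_M^2)(1+\rho)\le 5\int_{\{\rho^{1/2}>M\}}(|\nabla\rho^{1/2}|^2+1+\rho),
\]
which is small for large $M$. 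Moreover, $\max\{w,g_M\}$, with $w$ an unweighted capacitary function for $K$, has gradient only where $\rho<4M^2$. One would still have to pass from this to a $C_c^\infty$ function, which is not automatic for an unbounded weight; this is what Hino's Proposition~3.10 supplies.

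\textbf{Part (2), \eqref{e.Cap3}.} Set $E_\epsilon=\{y:\limsup_{r\downarrow0}r^{2-n}\sigma(B(y,r))>\epsilon\}$. For a general finite $\sigma$, your covering gives only $\Capacity^{1,2}(E_\epsilon)\lesssim\epsilon^{-1}\sigma(U)$ for open $U\supset E_\epsilon$. For $n\ge3$ this bound is finite but not zero, because $\sigma$ may charge $E_\epsilon$ and then $\sigma(U)$ cannot be made small. For example, with $n=3$ and $\sigma$ the length measure of a segment, $E_\epsilon$ contains the segment for every $\epsilon<2$. You need one further classical input, either of the following:
\begin{itemize}
\item sets of finite $\mathcal H^{n-2}$-measure have zero $(1,2)$-capacity when $n\ge3$; or
\item $r^{2-n}\sigma(B(y,r))\le\int_{B(y,r)}|y-z|^{2-n}\dif\sigma(z)\to0$ at every $y$ where the Newtonian potential of $\sigma$ is finite, and the set where that potential is infinite is polar.
\end{itemize}
For $n=2$ your covering does close, but not because of $\mathcal H^0_\infty$. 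The reason is that at most $\sigma(\bR^2)/\epsilon$ disjoint balls occur, and $\Capacity^{1,2}(B(y,r))\to0$ as $r\downarrow0$.
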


\begin{proof}
\begin{enumerate}[label=\textup{({\arabic*})}, align=right, leftmargin=*, topsep=5pt, parsep=0pt, itemsep=2pt] 
\item[\ref{it.Cap1}] 
This is a direct consequence of \cite[Proposition 3.10]{Hin13}

\item[\ref{it.Cap2}] 
\eqref{e.Cap2} is proved in \cite[Lemma 3.9]{Hin13}. \eqref{e.Cap3} is proved in \cite[Lemma 3.12]{Hin13}

\item[\ref{it.Cap3}] 
This is proved in \cite[Lemma 3.11]{Hin13}.
\end{enumerate}
\end{proof}

The following lemma extends the Cauchy--Schwarz inequality for energy measures \cite[(2.4)]{Hin13} to finite families of functions with bounded Borel coefficients. This matrix-valued version will be used in the integration-by-parts argument for Euclidean image measures.
\begin{lemma}[Matrix Cauchy--Schwarz for energy measures]\label{l.matrix-CS}
Let $(\form,\domain)$ be a strongly local regular Dirichlet form on $L^2(\ambient,\meas)$ and let $U\subset \ambient$ be open. For $f_1,\ldots,f_r,g_1,\ldots,g_s\in\domain_{\loc}(U)$, a relatively compact Borel set $E\subset U$, and bounded Borel functions $a_i,b_j$ on $E$, one has
\begin{equation}\label{e.matrix-CS}
	\left|\sum_{i=1}^r\sum_{j=1}^s\int_E a_i b_j\dif\Gamma\langle f_i,g_j\rangle\right|^2
	\le\left(\sum_{i,k=1}^r\int_E a_i a_k\dif \Gamma\langle f_i,f_k\rangle\right)\left(\sum_{j,\ell=1}^s\int_E b_j b_\ell\dif \Gamma\langle g_j,g_\ell\rangle\right).
\end{equation}
The same inequality holds for $E=\ambient$ if all $f_i,g_j\in\domain$ and the coefficients are bounded Borel functions on $\ambient$.
\end{lemma}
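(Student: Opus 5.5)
The plan is to reduce \eqref{e.matrix-CS} to the scalar Cauchy--Schwarz inequality \eqref{e:Cauchy-Schwarz-energy} applied to one combined function, first for simple coefficients and then for general bounded Borel coefficients by approximation. Using Definition~\ref{d:domain-local}, choose a relatively compact open $V$ with $E\subset V\Subset U$ and functions $f_i^\#,g_j^\#\in\domain$ agreeing with $f_i,g_j$ on $V$. All energy measures restricted to $E$ are then those of the $\domain$-functions, which are finite, so we may assume $f_i,g_j\in\domain$ and all measures are finite. This reduction also covers the case $E=\ambient$.

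\textbf{Step 1 (constant coefficients on a Borel set).} Let $F\subset E$ be Borel and $a_i,b_j$ real constants. Put $f=\sum_i a_if_i$ and $g=\sum_j b_jg_j$. By bilinearity of $\Gamma$,
\[\sum_{i,j}a_ib_j\Gamma\langle f_i,g_j\rangle(F)=\Gamma\langle f,g\rangle(F),\qquad \sum_{i,k}a_ia_k\Gamma\langle f_i,f_k\rangle(F)=\Gamma\langle f\rangle(F),\]
and similarly for $g$. Then \eqref{e:Cauchy-Schwarz-energy} gives $|\Gamma\langle f,g\rangle(F)|^2\le \Gamma\langle f\rangle(F)\,\Gamma\langle g\rangle(F)$.

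\textbf{Step 2 (simple coefficients).} Suppose the coefficients are simple, constant on each piece $F_\ell$ of a common finite Borel partition of $E$. Call the left-hand quantities $X_\ell$ and the two right-hand quantities $Y_\ell,Z_\ell\ge0$ on $F_\ell$. By Step~1, $|X_\ell|\le\sqrt{Y_\ell Z_\ell}$. The discrete Cauchy--Schwarz inequality then gives
\[\Bigl|\sum_\ell X_\ell\Bigr|^2\le\Bigl(\sum_\ell\sqrt{Y_\ell Z_\ell}\Bigr)^2\le\Bigl(\sum_\ell Y_\ell\Bigr)\Bigl(\sum_\ell Z_\ell\Bigr),\]
which is \eqref{e.matrix-CS}. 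Note that $Y_\ell\ge0$ because it equals $\Gamma\langle\sum_i a_if_i\rangle(F_\ell)$.

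\textbf{Step 3 (general bounded Borel coefficients).} Choose simple functions $a_i^{(m)}\to a_i$ and $b_j^{(m)}\to b_j$ uniformly on $E$; uniform approximation of bounded Borel functions by simple functions is standard. Each integral in \eqref{e.matrix-CS} is taken against a finite signed measure $\Gamma\langle f_i,g_j\rangle$, $\Gamma\langle f_i,f_k\rangle$ or $\Gamma\langle g_j,g_\ell\rangle$ on $E$. The integrands $a_i^{(m)}b_j^{(m)}$ and the others converge uniformly and stay uniformly bounded. So every term converges, and the inequality passes to the limit.

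The only real point of care is the localization in the first paragraph, namely that finiteness and the identification of $\Gamma_U$ with $\Gamma$ of the representatives hold on $E$. Relative compactness of $E$ gives both. The rest is elementary.
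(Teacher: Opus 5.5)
Your proof is correct, but it takes a different route from the paper's.

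\textbf{The paper's route.} The paper sets $h=(f_1,\ldots,f_r,g_1,\ldots,g_s)$ and uses the dominating measure $\mu=\sum_k\Gamma\langle h_k\rangle$. It shows that the density matrix $Z=(\dif\Gamma\langle h_i,h_k\rangle/\dif\mu)_{i,k}$ is positive semidefinite $\mu$-a.e., by testing against countably many rational vectors and using continuity. With $v=(a,0)$ and $w=(0,b)$, it then obtains \eqref{e.matrix-CS} in one step. The ingredients are the pointwise inequality $|v^{\mathsf T}Zw|\le (v^{\mathsf T}Zv)^{1/2}(w^{\mathsf T}Zw)^{1/2}$ and the integral Cauchy--Schwarz inequality.

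\textbf{Your route.} You never leave the level of set functions, and each of your steps has a counterpart in the paper's argument:
\begin{itemize}
\item The set-wise scalar inequality \eqref{e:Cauchy-Schwarz-energy}, applied to $\sum_i a_if_i$ and $\sum_j b_jg_j$, replaces the pointwise matrix inequality.
\item The discrete Cauchy--Schwarz inequality over a common partition replaces the integral one.
\item Uniform approximation by simple functions removes the restriction to simple coefficients. This step is valid because, after your localization, all the signed measures have finite total variation on $E$.
\end{itemize}

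\textbf{Comparison.} Your version is more elementary: it needs neither Radon--Nikodym densities nor the almost-everywhere positive semidefiniteness argument. It also makes the reduction from $\domain_{\loc}(U)$ to $\domain$ explicit, whereas the paper only handles this implicitly through local finiteness of energy measures. The paper's version handles arbitrary bounded Borel coefficients without any approximation. It also rests on the same density-matrix picture used elsewhere in the paper, for example in Lemma~\ref{l.core} and Proposition~\ref{p.finite-normalization}.
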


\begin{proof}
Write $h=(f_1,\ldots,f_r,g_1,\ldots,g_s)$ and set $\mu=\sum_{k=1}^{r+s}\Gamma\langle h_k\rangle$ on $U$. The mutual energy measures are absolutely continuous with respect to $\mu$, and the matrix
\[Z=\left(\frac{\dif\Gamma\langle h_i,h_j\rangle}{\dif\mu}\right)_{i,j=1}^{r+s}.\]
is positive semidefinite $\mu$-almost everywhere. Indeed, for every rational vector $c$, the measure $\Gamma(\sum_a c_a h_a)$ is nonnegative, and the assertion follows by taking a common null set and using continuity in $c$.
	
Set $v=(a_1,\ldots,a_r,0,\ldots,0)$ and $w=(0,\ldots,0,b_1,\ldots,b_s)$. By positive semidefiniteness and the Cauchy--Schwarz inequality,
	\[\left|\int_E v^{\mathsf T}Zw\dif \mu\right|^2\le\left(\int_E v^{\mathsf T}Zv\dif \mu\right)\left(\int_E w^{\mathsf T}Zw\dif \mu\right).\]
This is precisely \eqref{e.matrix-CS}. All integrals are finite by local finiteness of energy measures and boundedness of the coefficients. The same argument applies to $E=\ambient$ for functions in $\domain$.
\end{proof}

The following lemma concerns the Euclidean pushforward of the normalized energy measures $\nu_j$, weighted by the squares of the cutoff functions $\zeta_j$. More precisely, we study the measures
\[\kappa_j=(\mathbf H_j)_*(\zeta_j^2\nu_j).\]
This is a robust version of \cite[Proposition~3.4]{Hin13}, where Hino considered the pushforward of a weighted energy measure under harmonic functions whose energy-measure matrix is the identity with respect to that measure. In our setting, the coordinate functions are only asymptotically harmonic, and their energy-measure matrices are only approximately scalar, with errors tending to zero in total variation.

The underlying idea of studying the regularity of measures through integration-by-parts identities goes back at least to Fomin's theory of differentiable measures \cite{Fom68} and is closely related to Malliavin calculus \cite{Mal78}; see also \cite{Bog10}. The absolute continuity conclusion is closely related to the energy image density property established in \cite[Theorem~2.65]{EBM25}, which ensures absolute continuity of suitable Euclidean pushforwards of energy measures under a nondegeneracy condition. Here we require the stronger conclusion that the weak limit $\kappa$ of the image measures $\kappa_j$ has a density $\rho$ satisfying $\sqrt\rho\in H^1(\mathbb R^n)$. Following Hino's approach, we obtain this regularity through an integration-by-parts argument. The key difference is that the integration-by-parts identity is obtained only after passing to the limit. A matrix-valued Cauchy--Schwarz estimate controls the derivatives of the approximating measures, while mollification yields the desired regularity of the limiting density.

Throughout the remainder of the paper, weak convergence of finite measures on $\mathbb R^n$ means convergence against bounded continuous functions. That is, for finite Radon measures $\mu_j$ and $\mu$ on $\mathbb R^n$, we write $\mu_j\rightharpoonup\mu$ if
\[ \lim_{j\to\infty}\int_{\mathbb R^n}f\dif \mu_j =\int_{\mathbb R^n}f\dif \mu \qquad\text{for every }f\in C_b(\mathbb R^n).\]
For measures supported in a common compact set, this is equivalent to convergence against all functions in $C_c(\mathbb R^n)$.

\begin{lemma}\label{l.euclidean-image}
Under the setting of Proposition~\ref{p.finite-normalization}, let $\zeta_j$ be the cutoff functions from Lemma~\ref{l.uniform-cutoff}-\ref{it.cutoff1}, satisfying
\[\zeta_j:=1\ \text{on }B_j(p,R_*/4),\qquad \supp\zeta_j\subset B_j(p,R_*/2).\]
Define the following positive measures on $\mathbb R^n$:
\[	\kappa_j:=(\mathbf H_j)_*(\zeta_j^2\nu_j),\qquad
	\eta_j:=(\mathbf H_j)_*\Gamma^{(j)}(\zeta_j),\qquad
	\sigma_{2,j}:=(\mathbf H_j)_*(\zeta_j^2\meas_{j}).\]
Then, after passing to a subsequence, the following assertions hold.
\begin{enumerate}[label=\textup{(\arabic*)},leftmargin=*]
\item\label{it.image1}
The measures $\kappa_j$, $\eta_j$ and $\sigma_{2,j}$ converge weakly to finite positive measures $\kappa$, $\eta$ and $\sigma_2$, respectively. All these measures have support in a common compact subset of $\mathbb R^n$.
		
\item\label{it.image2}
There exists a nonnegative function $\rho$ on $\mathbb R^n$ such that
\begin{equation}\label{e.image-H1}
\kappa=\rho\cdot\mathcal L^n,\qquad\sqrt{\rho}\in H^1(\mathbb R^n),\ \ \text{ and }\ \int_{\mathbb R^n}|\nabla\sqrt{\rho}|^2\dif \sL^{n}\le\liminf_{j\to\infty}\form^{(j)}(\zeta_j).
\end{equation}
\end{enumerate}
\end{lemma}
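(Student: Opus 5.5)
Part~\ref{it.image1} is a compactness statement, and we handle it first. By Proposition~\ref{p.finite-normalization}-\ref{it.finite2}, $|\mathbf H_j|\le C$ on $U_j\supset\supp\zeta_j$. Hence all three pushforwards are supported in the fixed closed ball $\overline{B(0,C\sqrt n)}\subset\mathbb R^n$. Their total masses are bounded uniformly in $j$:
\begin{itemize}
\item $\kappa_j(\mathbb R^n)\le\nu_j(B_j(p,R_*))$;
\item $\eta_j(\mathbb R^n)\le\form^{(j)}(\zeta_j)$, using strong locality, since $\Gamma^{(j)}\langle\zeta_j\rangle$ lives on $\supp\zeta_j$;
\item $\sigma_{2,j}(\mathbb R^n)\le\meas_j(B_j(p,R_*))$.
\end{itemize}
These bounds follow from \eqref{e.finite-masses} and \eqref{e.finite-cutoff}. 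Prokhorov's theorem on a common compact set then gives a subsequence along which all three converge weakly.

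For part~\ref{it.image2} we follow Hino's integration-by-parts argument, working at finite $j$ and passing to the limit. Fix $\phi\in C_c^\infty(\mathbb R^n)$ and an index $i$, and test \eqref{e.finite-poisson} with $\varphi=\zeta_j^2\,\phi(\mathbf H_j)\in\domain\cap C_c(U_j)$. The Leibniz and chain rules \eqref{e.energy-Leibniz}, \eqref{e.energy-chain} then give
\[
\sum_k\int\zeta_j^2\,\partial_k\phi(\mathbf H_j)\dif\Gamma^{(j)}\langle H^i_j,H^k_j\rangle+2\int\zeta_j\phi(\mathbf H_j)\dif\Gamma^{(j)}\langle H_j^i,\zeta_j\rangle=-\int g_j^i\zeta_j^2\phi(\mathbf H_j)\dif\meas_j .
\]
Using \eqref{e.finite-matrix}, the first term equals $\int\partial_i\phi\dif\kappa_j$ plus an error bounded by $\|\nabla\phi\|_\infty\sum_k\|R_j^{ik}\|_{\mathrm{TV}}\to0$. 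The right-hand side is at most $\|\phi\|_\infty\|g_j^i\|_\infty\meas_j(B_j(p,R_*))\to0$.

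The cross term is the key point. Sum over $i$ with weights $\psi_i(\mathbf H_j)$ for a test vector field $\psi\in C_c^\infty(\mathbb R^n;\mathbb R^n)$, and apply Lemma~\ref{l.matrix-CS} with $f_i=H_j^i$, $a_i=\zeta_j\psi_i(\mathbf H_j)$, and $g=\zeta_j$. This bounds the summed cross term by
\[
2\Bigl(\int|\psi|^2(\mathbf H_j)\dif\kappa_j+o(1)\Bigr)^{1/2}\Bigl(\int\phi^2(\mathbf H_j)\dif\Gamma^{(j)}\langle\zeta_j\rangle\Bigr)^{1/2},
\]
where we have replaced $\phi\,\psi_i$ appropriately; concretely, we use the test function $\varphi=\zeta_j^2\phi(\mathbf H_j)$ with $\phi$ replaced by components of a vector field. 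The second factor equals $\bigl(\int\phi^2\dif\eta_j\bigr)^{1/2}$. Letting $j\to\infty$ yields
\[
\Bigl|\int\operatorname{div}\psi\dif\kappa\Bigr|\le2\Bigl(\int|\psi|^2\dif\kappa\Bigr)^{1/2}\eta(\mathbb R^n)^{1/2},
\qquad
\eta(\mathbb R^n)\le\liminf_j\form^{(j)}(\zeta_j).
\]
To keep the $\phi^2$ weight correct one takes the bound with $\sup|\phi|$-type control. The cleaner route is to apply the estimate to $\psi$ directly, since the $\zeta_j$-cross term only involves $\zeta_j\psi_i(\mathbf H_j)$ against $\Gamma\langle\cdot,\zeta_j\rangle$. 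By Riesz representation there is then $b\in L^2(\kappa;\mathbb R^n)$ with $\nabla\kappa=b\kappa$ distributionally and $\int|b|^2\dif\kappa\le4\liminf_j\form^{(j)}(\zeta_j)$.

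The final step, which we expect to be the main obstacle, is deducing $\kappa=\rho\mathcal L^n$ with $\sqrt\rho\in H^1$ from $\nabla\kappa=b\kappa$ and $b\in L^2(\kappa)$. We mollify: set $\kappa_\varepsilon=\kappa*\omega_\varepsilon$, a smooth compactly supported density $\rho_\varepsilon$, so that $\nabla\rho_\varepsilon=(b\kappa)*\omega_\varepsilon$. By Jensen/Cauchy--Schwarz in the convex form $|(b\kappa)*\omega_\varepsilon|^2\le\rho_\varepsilon\,(|b|^2\kappa)*\omega_\varepsilon$, we obtain
\[
\int|\nabla\sqrt{\rho_\varepsilon}|^2=\frac14\int\frac{|\nabla\rho_\varepsilon|^2}{\rho_\varepsilon}\le\frac14\int|b|^2\dif\kappa\le\liminf_j\form^{(j)}(\zeta_j).
\]
Since $\|\sqrt{\rho_\varepsilon}\|_{L^2}^2=\kappa(\mathbb R^n)$ is bounded, $\sqrt{\rho_\varepsilon}$ is bounded in $H^1$. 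A weakly convergent subsequence, combined with Rellich compactness on the common compact support, gives $\sqrt{\rho_\varepsilon}\to\sqrt\rho$ in $L^2$, so that $\rho_\varepsilon\to\rho$ in $L^1$. Since also $\rho_\varepsilon\mathcal L^n\rightharpoonup\kappa$, we get $\kappa=\rho\mathcal L^n$. Lower semicontinuity of the Dirichlet energy then gives \eqref{e.image-H1}.
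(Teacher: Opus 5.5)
Your proposal is correct and follows the paper's proof. The ingredients are the same: the test function $\zeta_j^2\phi(\mathbf H_j)$, the matrix Cauchy--Schwarz bound on the cross term, Riesz representation giving $\nabla\kappa=\mathbf b\kappa$ with $\int|\mathbf b|^2\dif\kappa\le 4\lim_j\mathcal E^{(j)}(\zeta_j)$, and mollification plus lower semicontinuity. In the cross-term bound the second factor is just $\Gamma^{(j)}\langle\zeta_j\rangle(\ambient)=\mathcal E^{(j)}(\zeta_j)$, so you should drop the garbled $\phi^2$-weighted display and keep your ``cleaner route''.

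The deviations are minor. You apply Riesz directly to $\boldsymbol\psi\mapsto\int\operatorname{div}\boldsymbol\psi\dif\kappa$ instead of first extracting weak limits $\alpha^i$ of the cross-term measures, which is slightly more economical. With a compactly supported mollifier you should also note that $\sqrt{\rho_\varepsilon}$ remains Lipschitz across the zero set of $\rho_\varepsilon$ (or work with $\sqrt{\rho_\varepsilon+\delta}$); the paper avoids this by using a Gaussian kernel, so that $\rho_\varepsilon>0$.
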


\begin{proof}
Set $E_*:=\liminf_{j\to\infty}\form^{(j)}(\zeta_j)$.	Passing to a subsequence, we may assume that $\form^{(j)}(\zeta_j)\to E_*$.
	
\begin{enumerate}[label=\textup{(\arabic*)},leftmargin=*]
		
\item[\ref{it.image1}]
By Proposition~\ref{p.finite-normalization}-\ref{it.finite4} and Lemma~\ref{l.uniform-cutoff}-\ref{it.cutoff1}, 
\[	\kappa_j(\mathbb R^n)\le\nu_j(U_j),\qquad
	\eta_j(\mathbb R^n)=\form^{(j)}(\zeta_j),\qquad
	\sigma_{2,j}(\mathbb R^n)\le \meas_{j}(U_j).\]
Thus these measures have uniformly bounded masses. Since $\mathbf H_j$ are uniformly bounded on $U_j$ and $\zeta_j$ have compact support in $U_j$, all three families are supported in a common compact subset of $\mathbb R^n$.
		
By weak compactness of finite measures, we may pass to a common subsequence such that
\[	\kappa_j\rightharpoonup\kappa,\qquad
	\eta_j\rightharpoonup\eta,\qquad
	\sigma_{2,j}\rightharpoonup\sigma_2. \]
This proves part~\ref{it.image1}.
		
\item[\ref{it.image2}]
We first establish an approximate integration-by-parts identity for $\kappa_j$. By Proposition~\ref{p.finite-normalization}-\ref{it.finite4}, we have
\begin{equation}\label{e.image-matrix}
	\Gamma^{(j)}\langle H_j^i,H_j^k\rangle=\delta_{ik}\nu_j+R_j^{ik}\quad\text{on }U_j,
\end{equation}
where
\[\varepsilon_j:=\sum_{i,k=1}^n\|R_j^{ik}\|_{\mathrm{TV}(U_j)}\to0,\ \text{ as }j\to\infty.\]
Moreover,
\begin{equation}\label{e.imagepos}
	\form^{(j)}(H_j^i,\varphi)=-\int_{U_j}g_j^i\varphi\dif \meas_{j},\qquad \max_{1\le i\le n}\|g_j^i\|_{L^{\infty}(\ambient,\meas)}\to0,
\end{equation}
for all $\varphi\in \domain\cap C_{c}(U_{j})$ .
		
For $1\le i,k\le n$ and $j \in \mathbb{N}$, define signed measures on $\mathbb R^n$ by
\[	\alpha_j^i:=2(\mathbf H_j)_*\bigl(\zeta_j\Gamma^{(j)}\langle\zeta_j,H_j^i\rangle\bigr), \qquad 
	\beta_j^{ik}:=(\mathbf H_j)_*(\zeta_j^2R_j^{ik}),\qquad
	\mu_j^i:=(\mathbf H_j)_*(\zeta_j^2g_j^i \meas_{j}).\]
		
Fix $\phi\in C_c^\infty(\mathbb R^n)$. Using $\varphi=\zeta_j^2\phi(\mathbf H_j)$ as a test function in \eqref{e.imagepos}, the Leibniz rule, the chain rule and \eqref{e.image-matrix} give
\begin{equation}\label{e.image-ibp}
\int_{\mathbb R^n}\partial_i\phi\dif \kappa_j =-\int_{\mathbb R^n}\phi\dif \alpha_j^i-\int_{\mathbb R^n}\phi\dif \mu_j^i-\sum_{k=1}^n\int_{\mathbb R^n}\partial_k\phi\dif \beta_j^{ik}.
\end{equation}

Since
\[\max_{1\le i\le n}\|\mu_j^i\|_{\mathrm{TV}}\le\max_{1\le i\le n}\|g_j^i\|_{L^{\infty}(\ambient,\meas)} \meas_{j}(U_j)\to0,\ \sum_{i,k=1}^n\|\beta_j^{ik}\|_{\mathrm{TV}}\le\varepsilon_j\to0, \ \text{ as }j\to\infty,\]
the last two terms in \eqref{e.image-ibp} vanish as $j\to\infty$.
		
We next control $\alpha_j^i$ using Lemma \ref{l.matrix-CS}. For $\boldsymbol\psi=(\psi_1,\ldots,\psi_n)\in C_c^\infty(\mathbb R^n;\mathbb R^n)$, Lemma~\ref{l.matrix-CS} gives
\begin{align}
	\left|\sum_{i=1}^n\int_{\mathbb R^n}\psi_i\dif \alpha_j^i\right|^2
	&\le4\form^{(j)}(\zeta_j)\sum_{i,k=1}^n\int_{U_j}\zeta_j^2\psi_i(\mathbf H_j)\psi_k(\mathbf H_j)\dif \Gamma^{(j)}\langle H_j^i,H_j^k\rangle\\
	&\le4\form^{(j)}(\zeta_j)\left(\int_{\mathbb R^n}|\boldsymbol\psi|^2\dif \kappa_j+\varepsilon_j\|\boldsymbol\psi\|_{\sup}^2\right).
\end{align}
Consequently,
\begin{equation}\label{e.image-alpha-CS}
\left|\sum_{i=1}^n\int_{\mathbb R^n}\psi_i\dif \alpha_j^i\right|^2\le4\form^{(j)}(\zeta_j)\left(\int_{\mathbb R^n}|\boldsymbol\psi|^2\dif \kappa_j+\varepsilon_j\|\boldsymbol\psi\|_{\sup}^2\right).
\end{equation}
		
Taking vector fields with only one nonzero component, we obtain
\[\|\alpha_j^i\|_{\mathrm{TV}}\le2\form^{(j)}(\zeta_j)^{1/2}\bigl(\kappa_j(\mathbb R^n)+\varepsilon_j\bigr)^{1/2}\quad\mbox{for all $j \in \mathbb{N}, 1 \le i \le n$}.\]
Thus the measures $\alpha_j^i$ have uniformly bounded total variations. They are also supported in a common compact subset of $\mathbb R^n$. Passing to a further subsequence, we may assume
\[\alpha_j^i\rightharpoonup\alpha^i,\qquad 1\le i\le n,\]
for some finite signed measures $\alpha^i$. Passing to the limit in \eqref{e.image-ibp} and \eqref{e.image-alpha-CS}, we obtain 
\begin{equation}\label{e.image-limit-ibp}
	\int_{\mathbb R^n}\partial_i\phi\dif \kappa=-\int_{\mathbb R^n}\phi\dif \alpha^i
\end{equation}
and
\begin{equation}\label{e.image-limit-CS}
	\left|\sum_{i=1}^n\int_{\mathbb R^n}\psi_i\dif \alpha^i\right|^2\le 4E_*\int_{\mathbb R^n}|\boldsymbol\psi|^2\dif \kappa.
\end{equation}
		
By the Riesz representation theorem, there exists $\mathbf b=(b_1,\ldots,b_n)\in L^2(\mathbb R^n,\kappa;\mathbb R^n)$ such that
\begin{equation}\label{e.image-b-bound}
	\alpha^i=b_i\kappa,\qquad 1\le i\le n,\ \text{ and }\ \int_{\mathbb R^n}|\mathbf b|^2\dif \kappa\le4E_*.
\end{equation}
Combining this with \eqref{e.image-limit-ibp}, we conclude that
\begin{equation}\label{e.image-measure-derivative}
	\nabla\kappa=\mathbf b\,\kappa\quad\text{in }\mathcal D'(\mathbb R^n).
\end{equation}
It remains to deduce the asserted Sobolev regularity. Let $\epsilon\in(0,\infty)$, $\chi_\epsilon$ be a standard Gaussian mollifier and set $\rho_\epsilon:=\chi_\epsilon*\kappa$. By \eqref{e.image-measure-derivative}, $\nabla\rho_\epsilon=\chi_\epsilon*(\mathbf b\,\kappa)$. The Cauchy--Schwarz inequality gives
\[|\nabla\rho_\epsilon(y)|^2\le\rho_\epsilon(y)\int_{\mathbb R^n}\chi_\epsilon(y-z)|\mathbf b(z)|^2\dif \kappa(z).\]
Therefore,
\[|\nabla\sqrt{\rho_\epsilon}(y)|^2=\frac{|\nabla\rho_\epsilon(y)|^2}{4\rho_\epsilon(y)}\le\frac{1}{4}\int_{\mathbb R^n}\chi_\epsilon(y-z)|\mathbf b(z)|^2\dif \kappa(z).\]
Integrating and using \eqref{e.image-b-bound}, we obtain
\begin{equation}\label{e.image-mollified}
\int_{\mathbb R^n}|\nabla\sqrt{\rho_\epsilon}|^2\dif \sL^{n}\le\frac14\int_{\mathbb R^n}|\mathbf b|^2\dif \kappa\le E_*.
\end{equation}
Here the argument is immediate if $\kappa=0$; otherwise $\rho_\epsilon>0$ everywhere.
		
Since $\int_{\mathbb R^n}\rho_\epsilon\dif \sL^{n}=\kappa(\mathbb R^n)$, the functions $\{\sqrt{\rho_\epsilon}\}_{\epsilon\in(0,\infty)}$ are uniformly bounded in $H^1(\mathbb R^n)$. By weak compactness in $H^1$ and local Rellich compactness, there exist $\epsilon_\ell\downarrow0$ and a nonnegative $h\in H^1(\mathbb R^n)$ such that
\[\sqrt{\rho_{\epsilon_\ell}}\rightharpoonup h		\quad\text{weakly in }H^1(\mathbb R^n),\ \text{ and }\sqrt{\rho_{\epsilon_\ell}}	\to h\quad\text{strongly in }L^2_{\loc}(\mathbb R^n,\sL^{n}), \text{ as }\ell\to\infty.\]

Consequently, $\rho_{\epsilon_\ell}\to h^2$ strongly in $L^1_{\loc}(\mathbb R^n)$. On the other hand, the approximation-of-identity property gives $\rho_{\epsilon_\ell}\mathcal L^n \rightharpoonup\kappa$. Hence $\kappa=h^2\,\mathcal L^n$. Setting $\rho=h^2$, we obtain $\sqrt{\rho}=h\in H^1(\mathbb R^n)$.
		
Finally, weak lower semicontinuity and \eqref{e.image-mollified} yield
\[\int_{\mathbb R^n}|\nabla\sqrt{\rho}|^2\dif y\le E_*=\liminf_{j\to\infty}\form^{(j)}(\zeta_j).\]
This proves \eqref{e.image-H1} and completes	part~\ref{it.image2}.\qedhere
\end{enumerate}
\end{proof}

 We now turn from the energy measures to the reference measures $\meas_{j}$. Specifically, we consider the Euclidean pushforward measures
 \[\sigma_j=(\mathbf H_j)_*\bigl(\one_{B_j(p,2)}\meas_{j}\bigr)\]
 and establish a non-concentration property for their weak limit $\sigma$.
 
In \cite[Proposition~3.13]{Hin13}, Hino proved that the pushforward of the reference measure under a suitable harmonic map cannot be concentrated on a set of zero $H^1$-capacity. His proof uses closability of the Dirichlet form and a comparison between weighted and unweighted Euclidean capacities, an approach related to the theory of closability and fine supports of measures developed by Fukushima, Sato and Taniguchi \cite{FST91}.
 
Our argument establishes the stronger conclusion that $\sigma$ does not charge any set of zero $H^1$-capacity when $n\ge2$. Rather than appealing to closability, we derive a Poincar\'e inequality for $\sigma$ directly from the source  Poincar\'e  inequality and the chain rule. Combined with the Sobolev regularity of the limiting energy-measure density $\rho$ established in the preceding lemma, this yields the desired non-concentration property. An additional ingredient is that $\sigma$ is not a point mass, which follows from approximate harmonicity and the normalization of the coordinate energy measures through a Caccioppoli argument.
 
As in Hino's approach, these properties allow us to select a point satisfying the Euclidean measure-density estimates needed in the proof of Theorem~\ref{t.main}.
 
\begin{lemma}\label{l.image-poincare}
Assume the setting and conclusions of Lemma~\ref{l.euclidean-image}, including the asymptotic harmonicity and energy-measure matrix assumptions. In particular, the rescaled MMD spaces $(\ambient,\metric_j,\meas_{j},\form^{(j)},\domain)$ satisfy $\VD$ and $\PI{\Psi_{j}}$ with constants independent of $j$, by Proposition~\ref{p.finite-normalization}-\ref{it.finite1}. Suppose also that
 	\[\meas_{j}(B_j(p,1))=\nu_j(B_j(p,1))=1\]
 and that the maps $\mathbf H_j$ satisfy uniform local H\"older estimates on $B_j(p,2)$. Assume that the cutoffs $\zeta_j$ equal one $\form_{j}$-q.e. on $B_j(p,2\Lambda_{\mathrm{PI}})$.
 	
Define
\[\sigma_j:=(\mathbf H_j)_*(\one_{B_j(p,2)}\meas_{j}).\]
After passing to a subsequence, $\sigma_j\rightharpoonup\sigma$ weakly as finite measures on $\mathbb R^n$, with the following properties:
 \begin{enumerate}[label=\textup{(\arabic*)},leftmargin=*,itemsep=3pt]
 \item\label{it.image-poincare1}
 $\sigma$ is nonzero and is not a point mass.
 		
 \item\label{it.image-poincare2}
 If $n\ge2$, then every Borel set $N\subset\mathbb R^n$ with $\Capacity^{1,2}(N)=0$ satisfies $\sigma(N)=0$.
 \end{enumerate}
 \end{lemma}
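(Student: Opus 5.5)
The plan is to obtain compactness from the uniform bounds, to rule out a point mass with a Caccioppoli argument built on the cutoffs $\chi_j$, and to prove part~\ref{it.image-poincare2} through a Poincar\'e inequality for $\sigma$ with weight $\rho$, combined with Proposition~\ref{p.CapE}-\ref{it.Cap1}.

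\emph{Compactness and nonvanishing.} We have $\sigma_j(\mathbb R^n)=\meas_j(B_j(p,2))$, which is at least $\meas_j(B_j(p,1))=1$ and is uniformly bounded by $\VD$. By Proposition~\ref{p.finite-normalization}-\ref{it.finite2}, $\mathbf H_j$ is uniformly bounded on $U_j\supset B_j(p,2)$, so all $\sigma_j$ live in a common compact set. Hence a subsequence converges weakly, and the limit satisfies $\sigma(\mathbb R^n)\ge1$.

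\emph{$\sigma$ is not a point mass.} Suppose instead that $\sigma=c\,\delta_a$.
\begin{itemize}
\item First I would upgrade weak convergence to $\sup_{B_j(p,3/2)}|\mathbf H_j-a|\to0$. Suppose $x\in B_j(p,3/2)$ with $|\mathbf H_j(x)-a|>2\epsilon$. By the uniform H\"older estimate there is a fixed $\delta>0$ with $|\mathbf H_j-a|>\epsilon$ on $B_j(x,\delta)\subset B_j(p,2)$. By uniform $\VD$ this ball has $\meas_j$-mass at least $c_\delta>0$. This contradicts $\sigma_j(\{|y-a|>\epsilon\})\to0$, which follows from weak convergence to $c\,\delta_a$.
\item Next, test \eqref{e.finite-poisson} with $\varphi=\chi_j^2(H_j^i-a_i)$, using $\chi_j$ from Lemma~\ref{l.uniform-cutoff}-\ref{it.cutoff2}; note $\supp\chi_j\subset B_j(p,3/2)\subset U_j$. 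The Leibniz rule and Cauchy--Schwarz give
\[
\int\chi_j^2\dif\Gamma^{(j)}\langle H_j^i\rangle\le \|g_j^i\|_\infty \sup|H_j^i-a_i|\,\meas_j(U_j)+2\sup_{B_j(p,3/2)}|H_j^i-a_i|\,\form^{(j)}(\chi_j)^{1/2}\Bigl(\int\chi_j^2\dif\Gamma^{(j)}\langle H_j^i\rangle\Bigr)^{1/2}.
\]
Since $\sup_j\form^{(j)}(\chi_j)<\infty$, the left side tends to $0$.
\item On the other hand, \eqref{e.finite-matrix} gives $\sum_i\int\chi_j^2\dif\Gamma^{(j)}\langle H_j^i\rangle\ge n\,\nu_j(B_j(p,1))-\varepsilon_j=n-o(1)$, which is a contradiction.
\end{itemize}

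\emph{Poincar\'e inequality for $\sigma$.} For $u\in C_c^\infty(\mathbb R^n)$, apply $\PI{\Psi_j}$ on $B_j(p,2)$ to $u\circ\mathbf H_j$. The dilated ball $B_j(p,2\Lambda_{\mathrm{PI}})$ lies in $U_j$, and $\zeta_j=1$ there. The chain rule and \eqref{e.finite-matrix} then give
\[
\int_{B_j(p,2)}|u(\mathbf H_j)-c_j|^2\dif\meas_j\le C\int|\nabla u|^2\dif\kappa_j+C\|\nabla u\|_{\sup}^2\varepsilon_j,
\]
where $c_j$ is the $\meas_j$-mean over $B_j(p,2)$. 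The left side is at least $\inf_c\int|u-c|^2\dif\sigma_j$. Passing to the limit along weakly convergent measures, with the infimum attained at bounded constants, yields
\[
\inf_{c}\int|u-c|^2\dif\sigma\le C\int|\nabla u|^2\rho\dif\mathcal L^n.
\]
By approximation, the same inequality holds for $C^1_c$ functions.

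\emph{No mass on polar sets.} By inner regularity and monotonicity of capacity, it suffices to treat a compact $K$ with $\Capacity^{1,2}(K)=0$. Suppose $\sigma(K)>0$. Since $\sigma$ is not a point mass, I choose disjoint compact sets $K_1\subset K$ and $K_2$ with $\sigma(K_1)>0$ and $\sigma(K_2)>0$. If $\sigma$ charges $\mathbb R^n\setminus K$, take $K_2$ there. Otherwise, split $K$ by a suitable sphere separating two points of $\supp\sigma$; then both pieces still have zero capacity.
\begin{itemize}
\item Fix a smooth $w$ with $w=1$ near $K_1$ and $w=0$ near $K_2$.
\item Take $v_\epsilon$ from Proposition~\ref{p.CapE}-\ref{it.Cap1} for $K_1$; this is where $n\ge2$ is used. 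Then $v_\epsilon w$ equals $1$ on $K_1$ and $0$ on $K_2$, and $\int|\nabla(v_\epsilon w)|^2\rho\le C_w\epsilon$.
\item The Poincar\'e inequality gives $|1-c|^2\sigma(K_1)+|c|^2\sigma(K_2)\le C C_w\epsilon$ for some constant $c$. This is impossible as $\epsilon\to0$.
\end{itemize}

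The main obstacle I expect is the uniform-convergence step in the point-mass exclusion. It must correctly combine the uniform H\"older bounds on $B_j(p,2)$ with uniform volume lower bounds of small balls, so that weak convergence can be converted into a sup-bound on $B_j(p,3/2)$.
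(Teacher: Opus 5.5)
Your proposal is correct and follows essentially the same route as the paper. Both arguments get compactness from the uniform mass and support bounds, and both rule out a point mass by upgrading to uniform convergence on $B_j(p,3/2)$ via the H\"older and volume lower bounds, followed by the Caccioppoli estimate with $\chi_j^2(H_j^i-a_i)$. Both then derive the pushforward Poincar\'e inequality from the rescaled Poincar\'e inequality and the chain rule, and apply Proposition~\ref{p.CapE}-\ref{it.Cap1} to a compact polar set separated from a second compact set of positive $\sigma$-measure. The differences are only cosmetic: you order the steps differently, and you choose the two disjoint compact sets by a case split rather than through the paper's Borel set $A$ with $0<\sigma(A)<\sigma(\mathbb R^n)$.
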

 
\begin{proof}
First we show the subsequential weak convergence of $\sigma_j$ and a Poincar\'e inequality for the weak limit $\sigma$. 
	
By the normalization and uniform volume doubling in Proposition \ref{p.finite-normalization}-\ref{it.finite1}, there exists $C \in (1,\infty)$ such that
\[1=\meas_{j}(B_j(p,1))\le \meas_{j}(B_j(p,2))\le C, \quad \mbox{for all $j \in \mathbb{N}$.}\]
Consequently,
\begin{equation}\label{e.image-sigma-mass}
	1\le \sigma_j(\mathbb R^n)\le C\quad \mbox{for all $j \in \mathbb{N}$.}
\end{equation}
	
Since the maps $\mathbf H_j$ are uniformly bounded on $B_j(p,2)$ by Proposition~\ref{p.finite-normalization}-\ref{it.finite2}, the measures $\sigma_j$ are supported in a common compact subset of $\mathbb R^n$. Hence, after passing to a subsequence, $\sigma_j\rightharpoonup\sigma$ weakly as finite measures on $\mathbb R^n$. Let $\kappa_j=(\mathbf H_j)_*(\zeta_j^2\nu_j)$ be as defined in Lemma~\ref{l.euclidean-image}. By choosing a further subsequence using Lemma~\ref{l.euclidean-image}-\ref{it.image2}, we may assume that there exists $\rho: \mathbf{R}^n \to [0,\infty)$ with $\sqrt{\rho} \in H^1(\mathbb{R}^n)$ such that
\[\kappa_j\rightharpoonup \kappa:=\rho\,\mathcal L^n.\]
	
We first derive a \emph{image Poincar\'e inequality} directly from the Poincar\'e inequality $\PI{\Psi_{j}}$ and the chain rule. Set $V_j=B_j(p,2)$ and $W_j=B_j(p,2\Lambda_{\mathrm{PI}})$. For $u\in C_c^\infty(\mathbb R^n)$ put $v_j=u(\mathbf H_j)$. Since $\sup_{j\in\bN}\Psi_j(2)<\infty$, the scale-invariant Poincaré\'e inequality $\PI{\Psi_{j}}$ with constants independent of $j$ gives
\begin{align}
\inf_{c\in\mathbb R}\int_{\mathbb R^n}|u-c|^2\dif \sigma_j
&\le C\int_{W_j}\dif\Gamma^{(j)}(v_j)=C\sum_{i,k=1}^n\int_{W_j}\partial_i u(\mathbf H_j)\partial_k u(\mathbf H_j)\dif \Gamma^{(j)}\langle H_j^i,H_j^k\rangle\\
&\overset{\eqref{e.image-matrix}}{=}C\int_{W_j}|\nabla u(\mathbf H_j)|^2\dif \nu_j +C\sum_{i,k=1}^n\int_{W_j}\partial_i u(\mathbf H_j)\partial_k u(\mathbf H_j)\dif R_j^{ik}.\label{e.image-PI-finite}
\end{align}

By assumption, $\zeta_j=1$ $\form_{j}$-q.e. on $W_j$. Since energy measures do not charge sets of zero capacity, this equality holds $\nu_j$-almost everywhere. Consequently,
\begin{equation}
\int_{W_j}|\nabla u(\mathbf H_j)|^2\dif \nu_j \le\int_{U_j}\zeta_j^2|\nabla u(\mathbf H_j)|^2\dif \nu_j =\int_{\mathbb R^n}|\nabla u|^2\dif \kappa_j.
\end{equation}
Moreover, the absolute value of the error term is bounded using
\[\abs{\sum_{i,k=1}^n\int_{W_j}\partial_i u(\mathbf H_j)\partial_k u(\mathbf H_j)\dif R_j^{ik}}	\le \varepsilon_j\|\nabla u\|_{\sup}^2,
	\qquad
	\varepsilon_j:=\sum_{i,k=1}^n\|R_j^{ik}\|_{\mathrm{TV}(U_j)}\to0.\]
We conclude that
\begin{equation}\label{e.image-PI-approx}
\inf_{c\in\mathbb R}\int_{\mathbb R^n}|u-c|^2\dif \sigma_j \le C\left(\int_{\mathbb R^n}|\nabla u|^2\dif \kappa_j+\varepsilon_j\|\nabla u\|_{\sup}^2\right). 
\end{equation}
	
It remains to pass to the limit. Since $\sigma_j(\mathbb R^n)\ge1$, by the weak convergence of $\sigma_j$, together with their uniformly compact supports, we know that, as $j\to\infty$,
\begin{align}
\inf_{c\in\mathbb R}\int_{\mathbb R^n}|u-c|^2\dif \sigma_j 
&=\int_{\mathbb R^n}u^2\dif \sigma_j-\frac{1}{\sigma_j(\mathbb R^n)}\left(\int_{\mathbb R^n}u\dif \sigma_j\right)^2\\
&\to \int_{\mathbb R^n}u^2\dif \sigma-\frac{1}{\sigma(\mathbb R^n)}\left(\int_{\mathbb R^n}u\dif\sigma\right)^2=\inf_{c\in\mathbb R}\int_{\mathbb R^n}|u-c|^2\dif \sigma.
\end{align}
	
Furthermore, since $\kappa_j\rightharpoonup\kappa=\rho\,\mathcal L^n$,
\[\int_{\mathbb R^n}|\nabla u|^2\dif \kappa_j\to\int_{\mathbb R^n}|\nabla u|^2\rho\dif \sL^{n}, \quad \mbox{for all $u \in C_c^\infty(\mathbb{R}^n)$}.\]
Letting $j\to\infty$ in \eqref{e.image-PI-approx}, we obtain the pushforward Poincar\'e inequality
\begin{equation}\label{e.image-PI}
	\inf_{c\in\mathbb R}\int_{\mathbb R^n}|u-c|^2\dif \sigma\le C\int_{\mathbb R^n}|\nabla u|^2\rho\dif \sL^{n}, \quad \mbox{for all }u\in C_c^\infty(\mathbb R^n).
\end{equation}
	
\begin{enumerate}[label=\textup{({\arabic*})}, align=right, leftmargin=*, topsep=5pt, parsep=0pt, itemsep=2pt]
 \item[\ref{it.image-poincare1}] 	
 Since the measures $\sigma_j$ are supported in a common compact subset of $\mathbb R^n$, their weak convergence to $\sigma$ implies $\sigma(\mathbb R^n)=\lim_{j\to\infty}\sigma_j(\mathbb R^n)$. Together with \eqref{e.image-sigma-mass}, this gives $1\le\sigma(\mathbb R^n)\le C$.	In particular, $\sigma$ is nonzero. 	Next, we show that the measure $\sigma$ is not a point mass.	

Suppose, toward a contradiction, that $\sigma=M\delta_a$ for some $a\in\mathbb R^n$ and $M\in(0,\infty)$. Since the measures $\sigma_j$ have uniformly compact support, weak convergence implies
\begin{equation}\label{e.image-point-L2}
\lim_{j \to \infty}\int_{B_j(p,2)} |\mathbf H_j(x)-a|^2\dif \meas_{j}(x) = \lim_{j \to \infty} \int_{\mathbb R^n}|y-a|^2\dif \sigma_j(y)=0.
\end{equation}
We claim that
\begin{equation}\label{e.image-point-uniform}
	\lim_{j \to \infty}	\sup_{x\in B_j(p,3/2)}|\mathbf H_j(x)-a|=0.
\end{equation}
To prove this, recall from Proposition~\ref{p.finite-normalization}-\ref{it.finite2} that  $\mathbf H_j$ satisfy a uniform H\"older estimate of the form
\begin{equation}\label{e.image-holder}
|\mathbf H_j(x)-\mathbf H_j(y)|\le C\metric_{j}(x,y)^{\theta},\ \text{ for $x,y\in B_j(p,2)$ with $\metric_{j}(x,y)\le\frac{1}{4}$},
\end{equation}
where $C$ and $\theta>0$ are independent of $j$. Moreover, $\VD$	and the normalization $\meas_{j}(B_j(p,1))=1$ imply that, for every fixed $r\in(0,1/4)$, there exists $c(r)>0$ such that
\begin{equation}\label{e.image-local-lower-volume}
\inf_{x\in B_j(p,3/2)}\meas_{j}(B_j(x,r))\ge c(r),\ \text{ for all $j$}.
\end{equation}
Indeed, for $x\in B_j(p,3/2)$, $B_j(p,1)\subset B_j(x,5/2).$ Applying $\VD$ in Proposition~\ref{p.finite-normalization}-\ref{it.finite1} a finite number of times, with constants depending on $r$ but not on $j$ or $x$, gives \eqref{e.image-local-lower-volume} since
\[1\le \meas_{j}(B_j(x,5/2))\le C(r)\meas_{j}(B_j(x,r)).\]
	
Suppose that \eqref{e.image-point-uniform} fails. Then, after passing to a subsequence, there exist $\delta>0$ and $x_j\in B_j(p,3/2)$ such that $|\mathbf H_j(x_j)-a|\ge\delta$. Choose $r\in(0,1/4)$ such that $Cr^\theta\le{\delta}/{2}$. For every $y\in B_j(x_j,r)$, \eqref{e.image-holder} gives
\begin{equation}\label{e.image-tria}
	|\mathbf H_j(y)-a|\ge|\mathbf H_j(x_j)-a|-|\mathbf H_j(y)-\mathbf H_j(x_j)|\overset{\eqref{e.image-holder}}{\ge}\frac{\delta}{2}
\end{equation}
Since $B_j(x_j,r)\subset B_j(p,2)$, we obtain
\begin{align}
\int_{B_j(p,2)}|\mathbf H_j-a|^2\dif \meas_{j} 
&\ge\int_{B_j(x_j,r)}|\mathbf H_j-a|^2\dif \meas_{j} \\
&\overset{\eqref{e.image-tria}}{\ge}\frac{\delta^2}{4}\meas_{j}(B_j(x_j,r))\stackrel{\eqref{e.image-local-lower-volume}}{\ge}\frac{\delta^2c(r)}4>0,
\end{align}
contradicting \eqref{e.image-point-L2}. This proves \eqref{e.image-point-uniform}.
	
By Lemma~\ref{l.uniform-cutoff}-\ref{it.cutoff2}, choose $\chi_j\in\domain_{j,0}(B_j(p,3/2))$ satisfying	$0\le\chi_j\le1,\chi_j=1$ on $B_j(p,1)$, and
\begin{equation}\label{e.image-chi-energy}
	\sup_j\form^{(j)}(\chi_j)<\infty.
\end{equation}
	
Fix $i\in\{1,\ldots,n\}$ and set $q_j^i=H_j^i-a_i$. Since $\chi_j\in\domain\cap C_c(B_j(p,3/2))$ and $q_j^i\in\domain_{\loc}(\ambient)\cap C(\ambient)$, the local product property and \cite[Theorem 1.4.2-(ii)]{FOT11} imply that
\[\varphi_j=\chi_j^2q_j^i\in\domain\cap C_c(U_j).\]
Here we use that $q_j^i$ is boundedon a neighborhood of $\supp\chi_j$. We may therefore apply Proposition~\ref{p.finite-normalization}-\ref{it.finite3} to obtain
	\[\form^{(j)}(H_j^i,\varphi_j)=	-\int_{U_j}g_j^i\varphi_j\dif \meas_{j}.\]
Since constants have zero energy measure, $\dif\Gamma^{(j)}\langle H_j^i,q_j^i\rangle=\dif\Gamma^{(j)}\langle H_j^i\rangle$. The product rule gives
	\[\dif\Gamma^{(j)}\langle H_j^i,\chi_j^2q_j^i\rangle=\chi_j^2\dif \Gamma^{(j)}\langle H_j^i\rangle+2\chi_jq_j^i\dif \Gamma^{(j)}\langle H_j^i,\chi_j\rangle.\]
Consequently,
\begin{equation}\label{e.icac-identity}
\int_{U_j}\chi_j^2\dif \Gamma^{(j)}\langle H_j^i\rangle+2\int_{U_j}\chi_jq_j^i\dif \Gamma^{(j)}\langle H_j^i,\chi_j\rangle=-\int_{U_j}g_j^i\chi_j^2q_j^i\dif \meas_{j}.
\end{equation}
	
Define
\[A_j^i=\int_{U_j}\chi_j^2\dif \Gamma^{(j)}\langle H_j^i\rangle, \qquad 	B_j^i=\int_{U_j}(q_j^i)^2\dif \Gamma^{(j)}\langle\chi_j\rangle.\]
By the Cauchy--Schwarz inequality for energy measures,
\begin{equation}\label{e.icac-identity1}
\left|\int_{U_j}\chi_jq_j^i\dif \Gamma^{(j)}\langle H_j^i,\chi_j\rangle\right|\le	(A_j^i)^{1/2}(B_j^i)^{1/2}.
\end{equation}
	
Define for each $j \in \mathbb{N}$,
\[M_j:=\sup_{B_j(p,3/2)}|\mathbf H_j-a|,\qquad G_j:=\max_{1\le i\le n}\|g_j^i\|_{L^\infty(U_j)}.\]
Since $\chi_j$ is supported in $B_j(p,3/2)$, we have $B_j^i\le M_j^2\form^{(j)}(\chi_j)$. Moreover,
\begin{equation}\label{e.icac-identity2}
\left|\int_{U_j}g_j^i\chi_j^2q_j^i\dif \meas_{j}\right|  \le G_jM_j\meas_{j}(B_j(p,3/2)).
\end{equation}		
	
It follows from \eqref{e.icac-identity}, \eqref{e.icac-identity1} and \eqref{e.icac-identity2} that
\begin{align}
A_j^i&\le 2(A_j^i)^{1/2}(B_j^i)^{1/2}+G_jM_j\meas_{j}(B_j(p,3/2)) \\
&\le\frac12A_j^i+2B_j^i+G_jM_j\meas_{j}(B_j(p,3/2)),
\end{align}
where we used the Cauchy-Schwarz inequality. Rearranging gives the Caccioppoli estimate  
\begin{equation}
A_j^i\le4B_j^i+2G_jM_j\meas_{j}(B_j(p,3/2))\le4M_j^2\form^{(j)}(\chi_j)+2G_jM_j\meas_{j}(B_j(p,3/2)). \label{e.icac}
\end{equation}
	
By $\VD$ in Proposition~\ref{p.finite-normalization}-\ref{it.finite1}, we have 
\begin{equation} \label{e.mjunif}
\sup_{j\in\bN}\meas_{j}(B_j(p,3/2))<\infty.
\end{equation}
By \eqref{e.image-point-uniform}, $\lim_{j \to \infty} M_j=0$, while the asymptotic harmonicity in Proposition~\ref{p.finite-normalization}-\ref{it.finite3} gives $\lim_{j \to \infty}G_j=0$. Therefore \eqref{e.icac}, \eqref{e.mjunif} and \eqref{e.image-chi-energy} imply $\lim_{j \to \infty} A_j^i=0$. Since $\chi_j=1$ on $B_j(p,1)$, we have $\Gamma^{(j)}\langle H_j^i\rangle(B_j(p,1)) \le A_j^i$ for all $j$. Thus for each $i$, we have $\Gamma^{(j)}\langle H_j^i\rangle(B_j(p,1))\to0$ as $j\to\infty$. Summing over $i$, we obtain
\begin{equation}\label{e.image-energy-vanishes}
	\lim_{j \to \infty}	\sum_{i=1}^n
		\Gamma^{(j)}\langle H_j^i\rangle(B_j(p,1))
	 = 0.
\end{equation}
	
On the other hand, by Proposition~\ref{p.finite-normalization}-\ref{it.finite4}
\[\lim_{j \to \infty} \sum_{i=1}^n \Gamma^{(j)}\langle H_j^i\rangle(B_j(p,1)) =  \lim_{j \to \infty} \left( n\nu_j(B_j(p,1)) + \sum_{i=1}^n R_j^{ii}(B_j(p,1)) \right)     =n>0,\]
contradicting \eqref{e.image-energy-vanishes}. 	Thus $\sigma$ cannot be a point mass.

\item[\ref{it.image-poincare2}]
It remains to prove that $\sigma$ does not charge sets of zero
		$H^1$ capacity, when $n \ge 2$.	The main idea is to combine the pushforward	Poincar\'e inequality with the Sobolev regularity	of $\sqrt{\rho}$ and the fact that $\sigma$ is not a point mass.
	
Assume $n\ge2$ and let $N\subset\mathbb R^n$ be a Borel set	satisfying	$\Capacity^{1,2}(N)=0$. Suppose, toward a contradiction, that $\sigma(N)>0$. Since $\sigma$ is a finite measure which is not a point mass, there exists a Borel set $A\subset\mathbb R^n$ such that $0<\sigma(A)<\sigma(\mathbb R^n)$. As $\sigma(N)>0$, we have 
	\[\text{either } \sigma(N\cap A)>0,\ \text{ or }\ \sigma(N\cap A^c)>0.\]
	
\begin{itemize}
	\item If $\sigma(N\cap A)>0$, the inner regularity of $\sigma$ allows us to choose compact sets $F\subset N\cap A$ and $G\subset A^c$ with $\sigma(F)>0$ and $\sigma(G)>0$;
	\item If instead $\sigma(N\cap A^c)>0$, we chosoe $F\subset N\cap A^c$ and $G\subset A$ with $\sigma(F)>0$ and $\sigma(G)>0$.
\end{itemize} 
In either case, $F$ and $G$ are disjoint compact sets satisfying $F\subset N$, $\sigma(F)>0$ and $\sigma(G)>0$. Since $\Capacity^{1,2}(F)\le\Capacity^{1,2}(N)=0$. Proposition~\ref{p.CapE}-\ref{it.Cap1} implies that there exist functions $v_k\in C_c^\infty(\mathbb R^n)$ satisfying
\[0\le v_k\le1, \qquad v_k=1\quad\text{on an open neighbourhood of }F,\]
and
\begin{equation}\label{e.image-vk-energy}
	\lim_{k \to \infty} \int_{\mathbb R^n}\left(v_k^2+|\nabla v_k|^2\right)(1+\rho)\dif \sL^{n}= 0.
\end{equation}
	
Since $F$ and $G$ are disjoint compact subsets of $\mathbb R^n$, there exists $\psi\in C_c^\infty(\mathbb R^n)$ 	such that $0\le\psi\le1$, $\psi=1$ near $F$ and $\psi=0$ near $G$. Define $u_k:=\psi v_k$. Then
\begin{equation}\label{e.image-uk-values}
		u_k=1\quad\text{on }F,\ \text{ and }\ u_k=0\quad\text{on }G.
	\end{equation}
By the product rule, $\nabla u_k = \psi\nabla v_k+v_k\nabla\psi$. Consequently, $|\nabla u_k|^2	\le2\psi^2|\nabla v_k|^2+2v_k^2|\nabla\psi|^2$. Since $\psi$ and $\nabla\psi$ are bounded,
\begin{align}
0\leq \int_{\mathbb R^n}|\nabla u_k|^2\rho\dif \sL^{n} &\le2\|\psi\|_{\sup}^2\int_{\mathbb R^n}|\nabla v_k|^2\rho\dif \sL^{n} +2\|\nabla\psi\|_{\sup}^2\int_{\mathbb R^n}v_k^2\rho\dif \sL^{n}\\
&\le C_\psi\int_{\mathbb R^n}\left(|\nabla v_k|^2+v_k^2\right)(1+\rho)\dif\sL^{n}\overset{\eqref{e.image-vk-energy}}{\to}  0,\ \text{ as }k\to\infty.
\label{e.image-uk-energy}
\end{align}

On the other hand, for every $c\in\mathbb R$,
\[\int_{\mathbb R^n}|u_k-c|^2\dif \sigma \overset{\eqref{e.image-uk-values}}{\ge} \int_F|1-c|^2\dif \sigma + \int_G|c|^2\dif \sigma = \sigma(F)(1-c)^2+\sigma(G)c^2.\]
Minimizing the right-hand side over $c\in\bR$ gives
\begin{equation}\label{e.image-variance-lower}
\inf_{c\in\mathbb R}\int_{\mathbb R^n}|u_k-c|^2\dif \sigma\ge\frac{\sigma(F)\sigma(G)}{\sigma(F)+\sigma(G)}>0,
\end{equation}
independently of $k$. However, applying the Poincar\'e inequality \eqref{e.image-PI} to $u_k$ gives for each $k$
\[\inf_{c\in\mathbb R}	\int_{\mathbb R^n}|u_k-c|^2\dif \sigma	\le	C\int_{\mathbb R^n}	|\nabla u_k|^2\rho\dif y.	\]
By letting $k \to \infty$ and using \eqref{e.image-uk-energy} and \eqref{e.image-variance-lower}, we obtain the desired contradiction. Hence $\sigma(N)=0$. \qedhere
\end{enumerate}
\end{proof}

\section{Proof by contradiction using Sobolev inequalities}\label{s.proof}

Recall that a sequence $\{x_{i}\}_{i=0}^{N}$ of points in $\ambient$ is an \emph{$\epsilon$-chain connecting $x,y\in\ambient$}, if 
\begin{equation}
	x_{0}=x,\ x_{N}=y,\ \text{ and }\ \max_{1\leq i\leq N}\metric(x_{i-1},x_{i})<\epsilon.
\end{equation}
The following connectivity is a localized version of \cite[Lemma~2.2]{Mur20}.
\begin{lemma}\label{l.local-chains}
Let $(\ambient,\metric,\meas,\form,\domain)$ be a MMD space satisfying $\PI{\Psi}$, with dilation constant $\Lambda_{\mathrm{PI}}$.	Let $L>\Lambda_{\mathrm{PI}}$. 	For every $p\in \ambient$, every $R>0$, and every $\epsilon>0$, any two points in $B(p,R)$ can be joined by an $\epsilon$-chain contained in $B(p,LR)$.
\end{lemma}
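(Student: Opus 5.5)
We argue by contradiction, combining the Poincar\'e inequality with a test function that is constant on each $\epsilon$-chain component. Fix $p\in\ambient$, $R>0$, $\epsilon>0$. For $x\in B(p,R)$, let $\Omega_x$ be the set of points $y\in B(p,LR)$ that can be joined to $x$ by an $\epsilon$-chain contained in $B(p,LR)$. The goal is to show $B(p,R)\subset\Omega_x$ for every such $x$.

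\textbf{Step 1: structure of the components.} The relation ``joined by an $\epsilon$-chain inside $B(p,LR)$'' is an equivalence relation on $B(p,LR)$. Its classes are open: if $y\in\Omega_x$, then $B(y,\epsilon)\cap B(p,LR)\subset\Omega_x$. Distinct classes are at distance at least $\epsilon$ from each other. Consequently, for any union $\Omega$ of classes, $\one_{\Omega}$ is locally constant on $B(p,LR)$ at scale $\epsilon$: on every ball of radius $\epsilon/2$ centered in $B(p,LR)$ it is constant.

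\textbf{Step 2: locally constant functions are in the domain with zero energy.} Suppose $B(p,R)$ meets two distinct classes. Let $\Omega:=\Omega_x$ and let $u:=\one_{\Omega}$ on $B(p,LR)$. I claim $u\in\domain_{\loc}(B(p,LR))$ with $\Gamma_{B(p,LR)}\langle u\rangle=0$. This is the main obstacle, since $\Omega$ need not be relatively compact in the interior. I would handle it as follows.
\begin{enumerate}[label=\textup{(\alph*)},leftmargin=*]
\item Given a relatively compact open $V\Subset B(p,LR)$, cover $\overline V$ by finitely many balls $B(z_k,\epsilon/4)$ contained in $B(p,LR)$.
\item Using regularity, take cutoffs $\varphi_k\in\domain\cap C_c(B(z_k,\epsilon/2))$ with $\varphi_k=1$ on $B(z_k,\epsilon/4)$; $\Capa{\Psi}$ also provides these.
\item Set $u^\#:=\max_{k:\,z_k\in\Omega}\varphi_k\in\domain$, using the Markov/lattice property. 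Then $u^\#=u$ on $V$, so $u\in\domain_{\loc}(B(p,LR))$.
\item By strong locality, $\Gamma\langle u^\#\rangle$ vanishes on the interior of $\{u^\#=1\}$ and on the interior of $\{u^\#=0\}$ (energy measure of a function equal to a constant on an open set vanishes there, \cite[Corollary~3.2.1]{FOT11}). These two interiors cover $V$, since $u$ is locally constant there. Hence $\Gamma_{B(p,LR)}\langle u\rangle=0$.
\end{enumerate}

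\textbf{Step 3: contradiction with $\PI{\Psi}$.} Since $L>\Lambda_{\mathrm{PI}}$, we have $B(p,\Lambda_{\mathrm{PI}}R)\subset B(p,LR)$, so $u\in\domain_{\loc}(B(p,\Lambda_{\mathrm{PI}}R))$ and \eqref{e.PI} applies:
\[\int_{B(p,R)}|u-u_{B(p,R)}|^2\dif\meas\le C_{\mathrm{PI}}\Psi(R)\,\Gamma\langle u\rangle\bigl(B(p,\Lambda_{\mathrm{PI}}R)\bigr)=0.\]
Thus $u$ is $\meas$-a.e. constant on $B(p,R)$. But $\Omega\cap B(p,R)$ is a nonempty open set containing $x$, and by assumption $B(p,R)\setminus\Omega$ contains a point $y$ of another class, hence also contains the nonempty open set $B(y,\epsilon)\cap B(p,R)$. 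Both sets have positive measure because $\meas$ has full support. This contradicts the a.e. constancy of $u$.

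Therefore all points of $B(p,R)$ lie in a single class. In particular $B(p,R)\subset\Omega_x$, which gives the claimed $\epsilon$-chains contained in $B(p,LR)$. The strict inequality $L>\Lambda_{\mathrm{PI}}$ is exactly what lets the relatively compact ball $B(p,\Lambda_{\mathrm{PI}}R)$ sit inside the open set where $u$ is defined.
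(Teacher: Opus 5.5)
Your proof is correct and follows essentially the same route as the paper's: the indicator of an $\epsilon$-chain class in $B(p,LR)$ is locally constant, hence lies in $\domain_{\loc}$ with vanishing energy measure, so the Poincar\'e inequality and full support of $\meas$ force $B(p,R)$ into a single class. Your Step~2 makes explicit, via a finite maximum of cutoffs, the membership in $\domain_{\loc}$ that the paper simply attributes to regularity and strong locality. One small correction there: in (a) you only need $z_k\in\overline V\subset B(p,LR)$, not $B(z_k,\epsilon/4)\subset B(p,LR)$, which can fail when $V$ is near the boundary.
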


\begin{proof}
Set $U=B(p,LR)$ and fix $\epsilon>0$. Let $C$ be an $\epsilon$-chain component of $U$. Both $C$ and $U\setminus C$ are relatively open in $U$, so $f=\one_C$ is locally constant on $U$. By regularity and strong locality, $f\in\domain_{\loc}(U)$ and $\Gamma\langle f\rangle(U)=0$. Since $B(p,\Lambda_{\mathrm{PI}}R)\subset U$, the Poincar\'e  inequality $\PI{\Psi}$ gives
\[ \int_{B(p,R)}|f-f_{B(p,R)}|^2\dif \meas=0.\]
As $f$ is continuous on $U$ and $\meas$ has full support, $f$ is constant on $B(p,R)$.
	
Taking $C$ to be the component containing $p$, we conclude that $B(p,R)\subset C$. This proves the assertion.
\end{proof}

The proof of the following lemma is a variant of the chaining argument in \cite[Lemma~2.4]{Mur25}. As in that work, we apply the  Poincar\'e   inequality along a chain of balls to control the oscillation of a capacitary cutoff function. The main difference is that we do not assume a chain condition or homogeneous volume growth.

\begin{lemma} \label{l.ds-lower}	
Assume $\VD$, $\PI{\Psi}$ and $\Capa{\Psi}$, with $\Psi$ satisfying \eqref{e.psibeta}. Then there exist $C,r_0>0$ such that
\begin{equation}\label{e.ds-scale-comparison}
	\frac{\Psi(R)}{\Psi(r)}\le C\left(\sup_{x\in \ambient}\frac{V(x,R)}{V(x,r)}\right)^2,\ \text{ for all }0<r<R<r_0.
\end{equation}
Consequently, $d_{\mathrm s}\in[1,2\alpha/\beta]$, where $\alpha$ and $\beta$ are constants from \eqref{e.VD-polynomial} and \eqref{e.psibeta}, respectively. 
\end{lemma}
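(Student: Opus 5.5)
We compare the capacity upper bound with a lower bound on the energy of a cutoff function, the latter obtained by chaining the Poincar\'e inequality. Fix $0<r<R<r_0$, where $r_0$ is small compared with $\diam(\ambient,\metric)$; if $\ambient$ is unbounded, any $r_0$ works. Choose $x\in\ambient$ and a point $y$ with $\metric(x,y)\approx 3R$; such a point exists since $R<r_0\ll\diam$. Apply $\Capa{\Psi}$ to the ball $B(x,R)$ to obtain $\varphi$ with $\varphi=1$ on $B(x,R)$, $\varphi=0$ outside $B(x,2R)$, and
\[
\form(\varphi)\le C\,\frac{V(x,R)}{\Psi(R)}.
\]

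For the lower bound, use Lemma~\ref{l.local-chains} with $\epsilon=r$ to join $x$ to $y$ by an $r$-chain $x=x_0,\dots,x_N=y$ inside $B(x,LR)$. By passing to a subchain we make consecutive points roughly $r$-separated, while keeping consecutive distances below $r$. The oscillation of $\varphi$ along the chain is $1$. The step from the chain lying in $B(x,R)$, where $\varphi=1$, to leaving $B(x,2R)$, where $\varphi=0$, forces a change of $1$ across the chain. Take balls $B_i=B(x_i,2r)$. Consecutive balls overlap in a set of measure comparable to $V(x_i,r)$ by $\VD$, so $|\varphi_{B_i}-\varphi_{B_{i+1}}|^2$ is controlled by $\PI{\Psi}$ on $B(x_i,4r)$:
\[
|\varphi_{B_i}-\varphi_{B_{i+1}}|^2\lesssim \frac{\Psi(r)}{V(x_i,r)}\,\Gamma\langle\varphi\rangle\bigl(B(x_i,4\Lambda_{\mathrm{PI}}r)\bigr).
\]
Here $\varphi_{B_0}=1$ and the final average is $0$, because $\varphi$ is continuous and the $r$-balls around the chain endpoints lie where $\varphi$ is constant, provided $r\ll R$ (the case $r\gtrsim R$ is trivial by \eqref{e.psibeta}).

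The step needing care is avoiding the chain condition: we do not control the chain length $N$. We therefore use a pigeonhole/Cauchy--Schwarz bound only over the chain points lying in the annulus $B(x,2R)\setminus B(x,R)$, where $\Gamma\langle\varphi\rangle$ lives, and we do not sum over the whole chain. Because the points are $r$-separated and the dilated balls have bounded overlap by $\MD$, we get
\[
1\le \sum_i |\varphi_{B_i}-\varphi_{B_{i+1}}|
\le \Bigl(\#\{i\}\cdot \max_i \frac{\Psi(r)}{V(x_i,r)}\,\form(\varphi)\Bigr)^{1/2}.
\]
The number of $r$-separated points in $B(x,LR)$ is at most $C\sup_z V(z,R)/V(z,r)$ by $\VD$, the usual volume-packing argument. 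Also $V(x,R)/V(x_i,r)\le C\sup_z V(z,R)/V(z,r)$ by $\VD$, since $x_i\in B(x,LR)$. Combining these with the capacity bound gives
\[
1\lesssim \frac{\Psi(r)}{\Psi(R)}\Bigl(\sup_z\frac{V(z,R)}{V(z,r)}\Bigr)^2,
\]
which is \eqref{e.ds-scale-comparison}. The main obstacle is exactly this counting, without a chain condition: the chain may be long, but only distinct separated points matter, and I would bound their number by the packing estimate rather than by $R/r$.

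Finally, for the consequence, take logarithms in \eqref{e.ds-scale-comparison} with $R=r'$ and $r=\lambda r'$ and take the supremum over $x$. This gives
\[
\log\frac{\Psi(r')}{\Psi(\lambda r')}\le \log C+2\log\sup_x\frac{V(x,r')}{V(x,\lambda r')},
\]
and therefore $\sup_x D(x,r',\lambda)\ge 1-O(1/\log(1/\lambda))$, using \eqref{e.psibeta} to see that the denominator grows like $\log(1/\lambda)$. Letting $r'\downarrow0$ and then $\lambda\downarrow0$ yields $d_{\mathrm s}\ge1$. The upper bound $d_{\mathrm s}\le 2\alpha/\beta$ follows directly from \eqref{e.VD-polynomial} and the lower scaling in \eqref{e.psibeta}: $D\le 2(\alpha\log(1/\lambda)+\log C)/(\beta\log(1/\lambda)-\log C_\Psi)\to 2\alpha/\beta$.
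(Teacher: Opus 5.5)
Your overall route is the paper's. You compare the $\Capa{\Psi}$ cutoff energy with a lower bound obtained by chaining $\PI{\Psi}$ along a chain from Lemma~\ref{l.local-chains}. You bound the number of links by a $\VD$ packing count $\lesssim M(R,r):=\sup_z V(z,R)/V(z,r)$ rather than by a chain condition, and you close with Cauchy--Schwarz. Your deduction of $d_{\mathrm s}\in[1,2\alpha/\beta]$ matches the paper's.

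\textbf{The gap.} The step you single out as the main obstacle is not resolved by what you describe. Passing to a subchain whose \emph{consecutive} points are roughly $r$-separated does not make the chain points pairwise separated. The chain from Lemma~\ref{l.local-chains} carries no control and can return to the same region, including the annulus $B(x,2R)\setminus B(x,R)$, arbitrarily many times. Then $\#\{i\}$ and the overlap multiplicity of the balls $B(x_i,4\Lambda_{\mathrm{PI}}r)$ are both uncontrolled. So neither the packing bound nor $\sum_i\Gamma\langle\varphi\rangle(B(x_i,4\Lambda_{\mathrm{PI}}r))\lesssim\form(\varphi)$ applies. Restricting to annulus points does not help, because the difficulty is multiplicity, not length.

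\textbf{The fix.} What is missing is a loop-erasure. The paper takes a maximal $r$-separated set $Z\subset B(p,LR)$ containing both endpoints, projects an $(r/4)$-chain onto $Z$, and erases loops. This gives \emph{distinct} points $z_0,\dots,z_N$ of $Z$ with $\metric(z_{k-1},z_k)<3r$. Then $N\le\#Z\le C_LM(R,r)$, and bounded overlap of the dilated balls follows from separation. A last-exit subchain also works: from $y_m$, jump to the last chain point within distance $r$ of $y_m$. Maximality then forces $\metric(y_m,y_{m'})\ge r$ whenever $|m-m'|\ge2$, so evens and odds are each $r$-separated. Once this is done, summing over the whole chain is fine and the annulus restriction is unnecessary.

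\textbf{A smaller point.} The existence of $y$ with $\metric(x,y)\approx 3R$ does not follow from $R\ll\diam(\ambient,\metric)$ alone in a general metric space. The paper invokes uniform perfectness of $(\ambient,\metric)$, which follows from $\VD$ and $\PI{\Psi}$. You can also get $y$ from Lemma~\ref{l.local-chains} with $\epsilon=R$, applied to $x$ and a point at distance $>4R$ from $x$. Such a chain must have a vertex at distance in $[3R,4R)$ from $x$.
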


\begin{proof}
Fix $p\in \ambient$. By \cite[Corollary~2.3]{Mur20}, $\VD$ and $\PI{\Psi}$ imply that $(\ambient,\metric)$ is uniformly perfect. More precisely, whenever $B(x,r)\ne \ambient$, we have $B(x,r)\setminus B(x,r/2)\ne\emptyset$. Since $\#\ambient\ge2$, we may choose $0<R< \diam(\ambient,\metric)/32$ so that $B(p,8R)\neq \ambient$ for all $p \in \ambient$. Consequently, there exists $q\in B(p,8R)\setminus B(p,4R)$.Write
\[M(R,r):=\sup_{x\in \ambient}\frac{V(x,R)}{V(x,r)},\ \text{ for }0<r<R<\infty.\]
Fix $L>9\Lambda_{\mathrm{PI}}$ and put $U=B(p,LR)$. By Lemma~\ref{l.local-chains}, applied to $B(p,9R)$, the points $p$ and $q$ can be joined by an $\epsilon$-chain contained in $U$, for every $\epsilon>0$.
	
Let $r\in(0,10^{-2}(1+\Lambda_{\mathrm{PI}})^{-1}{R})$. and choose a maximal $r$-separated set $Z\subset U$ containing $p$ and $q$. For every $z\in Z$, $\VD$ and the definition of $M(R,r)$ give
\[V(z,r/3)\ge cV(z,r)\ge c_L\frac{V(p,R)}{M(R,r)}.\]
The balls $B(z,r/3)$ are pairwise disjoint and contained in $B(p,(L+1)R)$. Another application of $\VD$ therefore yields $\#Z\le C_LM(R,r)$. Choose an $(r/4)$-chain from $p$ to $q$ inside $U$ and project its vertices onto $Z$. After erasing loops, we obtain distinct points $z_0=p,z_1,\ldots,z_N=q$	such that
\begin{equation} \label{e.bnd-L}
	\max_{1\leq k \leq N}d(z_{k-1},z_k)<3r,\ \text{ and }\ N\le C_LM(R,r).
\end{equation}
By $\Capa{\Psi}$, there exists $f\in\domain\cap C_c(\ambient)$ such that
\begin{equation}\label{e.ds-cutoff-energy}
	0\le f\le1,\  f=1\ \text{on }B(p,R),\ 	f=0\ \text{on }\ambient\setminus B(p,2R),\ \text{ and }	\form(f)\le C\frac{V(p,R)}{\Psi(R)}.
\end{equation}
Set $B_k=B(z_k,r)$ and $a_k=\fint_{B_k}f\dif \meas$. Since $f=1$ on $B(p,R)$ and vanishes outside $B(p,2R)$, we have $a_0=1$ and $a_N=0$. For consecutive vertices, the balls $B_{k-1}$ and $B_k$ lie in $B(z_{k-1},5r)$. The  Poincar\'e  inequality  $\PI{\Psi}$  and $\VD$ imply
\begin{equation}\label{e.ds-edge}
	|a_k-a_{k-1}|^2\le C\frac{\Psi(r)}{V(z_{k-1},r)}\Gamma\langle f\rangle(5\Lambda_{\mathrm{PI}}B_{k-1})\leq C_{1}\frac{\Psi(r)M(R,r)}{V(p,R)}\Gamma\langle f\rangle(5\Lambda_{\mathrm{PI}}B_{k-1}).
\end{equation}
Since the balls $\{B(z_k,5\Lambda_{\mathrm{PI}}r)\}_{k=1}^{N}$ have uniformly bounded overlap, by Cauchy--Schwarz inequality, and summing \eqref{e.ds-edge}, we obtain
\begin{align}
1=|a_N-a_0|^2
&\le N\sum_{k=1}^N|a_k-a_{k-1}|^2\overset{\eqref{e.bnd-L},\eqref{e.ds-edge}}{\leq}C\frac{\Psi(r)M(R,r)^2}{V(p,R)}\sum_{k=1}^{N}\Gamma\langle f\rangle(5\Lambda_{\mathrm{PI}}B_{k-1})\\
&\leq C\frac{\Psi(r)M(R,r)^2}{V(p,R)}\form(f)\overset{\eqref{e.ds-cutoff-energy}}{\leq} C\frac{\Psi(r)}{\Psi(R)}M(R,r)^{2}.\label{e.chain-ener}
\end{align}
This proves \eqref{e.ds-scale-comparison} when $r\in(0,10^{-2}(1+\Lambda_{\mathrm{PI}})^{-1}{R})$. For $r\in [10^{-2}(1+\Lambda_{\mathrm{PI}})^{-1}{R},R)$, the estimate follows from weak scaling of $\Psi$ in \eqref{e.psibeta}, after increasing $C$.
	
Finally, take $r=\lambda R$. Then for every $\lambda\in(0,1)$,
\[\sup_{x\in \ambient}D(x,R,\lambda)\overset{\eqref{e.ds-scale-comparison}}{\ge}1-\frac{\log C}{\log(\Psi(R)/\Psi(\lambda R))}\overset{\eqref{e.psibeta}}{\geq} 1-\frac{\log C}{\beta\log\lambda^{-1}-\log C_{1}}, \text{ for all }R\in(0,\infty). \]
Letting $\lambda\downarrow0$, we obtain $d_{\mathrm s}\ge1$. By \eqref{e.VD-polynomial} and \eqref{e.psibeta}, we have
\[D(x,r,\lambda)\leq \frac{2\alpha\log\lambda^{-1}-\log C_{2}}{\beta\log\lambda^{-1}-\log C_{3} },\ \text{ for all }x\in\ambient,\ r\in(0,\infty),\ \lambda\in(0,1) \]
for some constant $C_{2},C_{3}>0$. Letting $\lambda\downarrow0$, we obtain $d_{\mathrm s}\leq 2\alpha/\beta$.
\end{proof}

The following lemma makes explicit the volume growth information encoded in the uniform small-scale spectral  dimension. For every $s>d_{\mathrm s}$, it provides a lower bound on the relative volumes of concentric balls, uniformly over their centres and at sufficiently small scales. By volume doubling, the estimate extends to balls whose centres lie within a fixed multiple of the reference radius. These estimates will be used to obtain uniform heat-kernel upper bounds for the rescaled Dirichlet forms, and hence the ultracontractivity estimates needed in the final argument.
 
\begin{lemma}\label{l.uniform-profile}
Assume $\VD$, \eqref{e.psibeta}, and let $d_{\mathrm{s}}$ be defined in \eqref{e.ds-unif}. For every $s\in(d_{\mathrm s},\infty)$ there exist $c_s>0$ and $r_s>0$ such that
\begin{equation}\label{e.uniform-profile}
	\frac{\meas(B(x,tr))}{\meas(B(x,r))}\ge c_s\left(\frac{\Psi(tr)}{\Psi(r)}\right)^{s/2},	\ \text{ for all } x\in \ambient,\ r\in(0,r_s),\ t\in(0,1].
\end{equation}
Moreover, for each fixed $R\in[1,\infty)$ there is $c_{s,R}>0$ such that
\begin{equation}\label{e.uniform-nearby}
\frac{\meas(B(x,tr))}{\meas(B(p,r))}\ge c_{s,R}\left(\frac{\Psi(tr)}{\Psi(r)}\right)^{s/2},	\ \text{ for all } x\in B(p,Rr),\ r\in(0,r_s),\ t\in(0,1].
\end{equation}
\end{lemma}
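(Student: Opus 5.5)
We unwind the definition \eqref{e.ds-unif} at one fixed small ratio $\lambda$. Then we iterate along geometric scales $r,\lambda r,\lambda^2 r,\dots$, and use $\VD$ together with the weak scaling \eqref{e.psibeta} to fill in the intermediate values of $t$.

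\textbf{Step 1 (fixing $\lambda$ and $r_s$).} Fix $s>d_{\mathrm s}$ and choose $s'\in(d_{\mathrm s},s)$. Since the $\liminf_{\lambda\downarrow0}$ in \eqref{e.ds-unif} is $<s'$, there are arbitrarily small $\lambda\in(0,1)$ with $\limsup_{r\downarrow0}\sup_x D(x,r,\lambda)<s'$. For such a $\lambda$ there is $r_s>0$ with $D(x,r,\lambda)\le s'$ for all $x\in\ambient$ and $r<r_s$. Since $\Psi(r)/\Psi(\lambda r)>1$ (shrinking $\lambda$ if needed so that $C_\Psi^{-1}\lambda^{-\beta}>1$), this bound is equivalent to
\[
\frac{V(x,\lambda r)}{V(x,r)}\ge\left(\frac{\Psi(\lambda r)}{\Psi(r)}\right)^{s'/2}\qquad\text{for all } x\in\ambient,\ r\in(0,r_s).
\]

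\textbf{Step 2 (iteration).} Applying this at the radii $r,\lambda r,\dots,\lambda^{k-1}r$, all of which are $<r_s$, and telescoping gives
\[
\frac{V(x,\lambda^k r)}{V(x,r)}\ge\left(\frac{\Psi(\lambda^k r)}{\Psi(r)}\right)^{s'/2}\qquad\text{for every }k\ge0.
\]
For general $t\in(0,1]$, pick $k$ with $\lambda^{k+1}<t\le\lambda^k$. Monotonicity gives $V(x,tr)\ge V(x,\lambda^{k+1}r)$. By \eqref{e.psibeta}, $\Psi(\lambda^{k+1}r)\ge C_\Psi^{-1}\lambda^{\beta'}\Psi(tr)$, which follows from $tr\le\lambda^{-1}\cdot\lambda^{k+1}r$. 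Combining these yields \eqref{e.uniform-profile} with exponent $s'$ and constant $c=(C_\Psi^{-1}\lambda^{\beta'})^{s'/2}$. Finally, since $\Psi(tr)/\Psi(r)\le C_\Psi\le C_\Psi^{s/2}$ (using $C_\Psi\ge1$), we have $(\Psi(tr)/\Psi(r))^{s'/2}\ge C_\Psi^{-(s-s')/2}(\Psi(tr)/\Psi(r))^{s/2}$. This upgrades the estimate to exponent $s$ with $c_s>0$. (One could even take $s'=s$ directly, but the slack is harmless.)

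\textbf{Step 3 (nearby centres).} Let $x\in B(p,Rr)$. Then $B(p,r)\subset B(x,(R+1)r)$, so by \eqref{e.VD-polynomial}
\[
V(p,r)\le V(x,(R+1)r)\le C(R+1)^\alpha V(x,r).
\]
Multiplying \eqref{e.uniform-profile} at centre $x$ by $V(x,r)/V(p,r)\ge C^{-1}(R+1)^{-\alpha}$ gives \eqref{e.uniform-nearby} with $c_{s,R}=c_sC^{-1}(R+1)^{-\alpha}$.

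The only delicate point is Step 1: correctly unwinding the order of $\liminf_\lambda$ and $\limsup_r$, and choosing a single $\lambda$ such that both the $\limsup$ bound holds and $\Psi(r)/\Psi(\lambda r)>1$. This keeps the logarithmic inequality's direction intact. Everything after that is bookkeeping with $\VD$ and \eqref{e.psibeta}.
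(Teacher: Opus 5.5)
Your proposal is correct and follows the paper's proof essentially step for step: fix one $\lambda$ from the $\liminf$ with a slack exponent, telescope along the radii $\lambda^k r$, handle intermediate $t$ via monotonicity and \eqref{e.psibeta}, and then pass to nearby centres using volume doubling. Two small remarks, neither of which affects the argument: the side claim $C_\Psi\le C_\Psi^{s/2}$ is unnecessary and fails for $s<2$, and there is no need to shrink $\lambda$, since $\Psi$ is strictly increasing (so $\Psi(r)/\Psi(\lambda r)>1$ and $\Psi(tr)/\Psi(r)\le 1$ automatically).
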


\begin{proof}
Choose $s_0$ with $d_{\mathrm s}<s_0<s$. The definition of the $\liminf$ in \eqref{e.ds-unif} gives a fixed $\lambda_0\in(0,1)$ for which
\[\limsup_{r\downarrow0}\sup_{x\in \ambient}D(x,r,\lambda_0)<s_0.\]
After decreasing $r_s$, we have
\[\frac{V(x,\lambda_0r)}{V(x,r)}\ge\left(\frac{\Psi(\lambda_0r)}{\Psi(r)}\right)^{s_0/2},\ \text{ for every $x\in\ambient$ and $r\in(0,r_s)$}.\]
Since $\lambda_{0}\in(0,1)$, the same inequality applies at the radii $r,\lambda_0r,\ldots,\lambda_0^{k-1}r$ for each $k\in\bN$. By multiplying these inequalities, we obtain
\begin{equation}\label{e.profile-iterate}
	\frac{V(x,\lambda_0^kr)}{V(x,r)}\ge\left(\frac{\Psi(\lambda_0^kr)}{\Psi(r)}\right)^{s_0/2},\ \text{ for all $x\in\ambient$, $r\in(0,r_s)$ and $k\in\bN$.}
\end{equation}
For $t\in(0,1]$, we choose $k\in\bN$ such that $t\in(\lambda_0^{k+1},\lambda_0^k]$. By the monotonicity of measures, \eqref{e.profile-iterate}, and weak scaling of $\Psi$ in \eqref{e.psibeta} give, with a constant $c$ depending only on $\lambda_0,s_0,\Psi$,
\[\frac{V(x,tr)}{V(x,r)}\ge c\left(\frac{\Psi(tr)}{\Psi(r)}\right)^{s_0/2} \ge c\left(\frac{\Psi(tr)}{\Psi(r)}\right)^{s/2}.\]
This is \eqref{e.uniform-profile}. Finally, if $x\in B(p,Rr)$, the inclusion $B(p,r)\subset B(x,(R+1)r)$ and iterated $\VD$ imply $V(x,r)\ge c_RV(p,r)$. Combining this with	\eqref{e.uniform-profile} proves \eqref{e.uniform-nearby}.
\end{proof}

The next lemma translates the small-scale volume growth encoded in $d_{\mathrm s}$ into uniform ultracontractivity estimates for the rescaled Dirichlet forms. We work with semigroups killed on fixed rescaled balls, since the relevant volume estimates are local rather than global. The proof combines the heat-kernel upper bounds implied by our assumptions with the definition of $d_{\mathrm s}$. The crucial point is that the resulting estimates are uniform in the blow-up parameter $j$. In the subsequent lemma, they yield uniform Sobolev inequalities, which provide the final contradiction in the proof of Theorem~\ref{t.main}.

\begin{lemma} \label{l.local-killed}
Assume the hypotheses of Proposition~\ref{p.finite-normalization}. Fix $s\in(d_{\mathrm s},\infty)$ and let $R_*=8(1+\Lambda_{\mathrm{PI}})$ be as in Proposition~\ref{p.finite-normalization}. Set $U_j=B_j(p,R_*)$. Let $(P_t^{U_j})_{t\in(0,\infty)}$ be the semigroup of the part Dirichlet form $(\form^{(j),U_j},\domain_{0}(U_j))$ on $L^2(U_j,\meas_{j})$. There exist $C=C(s,R_*)>0$ and $j_0\in\bN$ such that, for all $j\ge j_0$ and $t\in(0,1]$,
\begin{equation}\label{e.finite-ultra}
	\|P_t^{U_j}\|_{L^1(U_j,\meas_{j})\to L^\infty(U_j,\meas_{j})}\le Ct^{-s/2}.
\end{equation}
\end{lemma}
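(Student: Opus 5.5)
We will obtain \eqref{e.finite-ultra} by domination of the killed semigroup by the full semigroup, combined with uniform on-diagonal heat kernel upper bounds and the volume lower bound of Lemma~\ref{l.uniform-profile}.

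First, by Proposition~\ref{p.finite-normalization}-\ref{it.finite1}, each rescaled space $(\ambient,\metric_j,\meas_j,\form^{(j)},\domain)$ satisfies $\VD$, $\PI{\Psi_j}$ and $\Capa{\Psi_j}$ with constants independent of $j$. By the known equivalence of these conditions with sub-Gaussian heat kernel bounds (via the cutoff Sobolev inequality of \cite[Theorem~3.3]{EB26} and \cite{GHL15}), the heat kernel $p^{(j)}_t$ of $(\form^{(j)},\domain)$ exists and satisfies the on-diagonal upper bound
\[
p_t^{(j)}(x,y)\le \frac{C}{\meas_j(B_j(x,\Psi_j^{-1}(t)))},\qquad x,y\in\ambient,\ t>0,
\]
with $C$ independent of $j$. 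If the global version requires $t<\Psi_j(\diam)$, we only need $t\le1$, which is harmless for large $j$. Since $0\le P_t^{U_j}f\le P_t^{(j)}f$ for $f\ge0$ (domination of the part semigroup, \cite[Section~4.4]{FOT11}), the kernel of $P_t^{U_j}$ is bounded by $p^{(j)}_t$ restricted to $U_j\times U_j$. Hence $\|P_t^{U_j}\|_{1\to\infty}\le C\sup_{x\in U_j}\meas_j(B_j(x,\Psi_j^{-1}(t)))^{-1}$.

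Second, we bound this volume from below uniformly. Write $\tau=\Psi_j^{-1}(t)$. For $t\in(0,1]$ we have $\tau\le1$, because $\Psi_j(1)=1$. Unwinding the rescaling, $\meas_j(B_j(x,\tau))=\meas(B(x,\tau r_j))/\meas(B(p,r_j))$ with $x\in B(p,R_*r_j)$. Choose $j_0$ with $r_j<r_s$ for $j\ge j_0$, and apply \eqref{e.uniform-nearby} with $R=R_*$, $r=r_j$ and the parameter $\tau\in(0,1]$. This gives
\[
\meas_j(B_j(x,\tau))\ge c_{s,R_*}\Bigl(\frac{\Psi(\tau r_j)}{\Psi(r_j)}\Bigr)^{s/2}=c_{s,R_*}\Psi_j(\tau)^{s/2}=c_{s,R_*}t^{s/2}.
\]
Combining the two steps yields \eqref{e.finite-ultra}.

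The main obstacle is the first step. We must make sure that the heat kernel upper bound is available with constants depending only on the structural constants and stated in the scale-invariant form needed, including the small-time, possibly bounded-space regime. If a global bound is awkward, for example when $\ambient$ is bounded, we would instead argue locally. In that case we use the Faber--Krahn inequality on balls from \cite{GHL15}, which gives $\|P_t^{B}\|_{1\to\infty}$ bounds in terms of $\meas_j(B)$ for $B\subset B_j(p,2R_*)$. We then patch these bounds together with the mean value inequality of Lemma~\ref{l.uniform-analytic}. The uniform volume input is exactly the same in either approach.
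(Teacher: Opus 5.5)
Your proposal is correct and follows the paper's proof. The ingredients are the same: the uniform on-diagonal bound $p_t^{(j)}(x,y)\le C/\meas_j(B_j(x,\Psi_j^{-1}(t)))$ from \cite[Theorem~3.3]{EB26} and \cite[Theorem~1.2]{GHL15}, domination of the killed kernel by the global one, and the volume lower bound \eqref{e.uniform-nearby} at scale $r=r_j<r_s$. Your use of $\Psi_j(1)=1$ to get $\Psi_j^{-1}(t)\le 1$ yields $t^{s/2}$ directly, which is slightly cleaner than the paper's insertion of a constant $C^{-1}$, and the local Faber--Krahn fallback you sketch is not needed.
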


\begin{proof}
Recall that
\[	\Psi_j(r)=\frac{\Psi(r_jr)}{\Psi(r_j)},\qquad
	\meas_{j}=\frac{\meas}{\meas(B(p,r_j))},\qquad
	\metric_{j}=\frac{\metric}{r_j}. \]
By \cite[Theorem 3.3]{EB26} and \cite[Theorem 1.2]{GHL15}, the rescaled global heat kernel satisfies
\[p_t^{(j)}(x,y)\le\frac{C}{\meas_{j}(B_j(x,\Psi_j^{-1}(t)))}.\]
Note that $\sup_{j\in\bN}\sup_{t\in(0,1]}\Psi_j^{-1}(t)\leq C$. Since $x\in U_j=B_j(p,R_*)$, we have 
\begin{align}
\meas_{j}(B_j(x,\Psi_j^{-1}(t)))=	\frac{\meas(B(x,r_j\Psi_j^{-1}(t)))}{\meas(B(p,r_j))} 	
&\geq \frac{\meas(B(x,r_j C^{-1}\Psi_j^{-1}(t)))}{\meas(B(p,r_j))} \\
&\overset{\eqref{e.uniform-nearby}}{\ge}c_{s,R}\left(\frac{\Psi(r_jC^{-1}\Psi_j^{-1}(t))}{\Psi(r_j)}\right)^{s/2} =c'_{s,R}t^{s/2}.
\end{align}
The constant is independent of $j$, provided $r_j$ is sufficiently small.
	
Since heat kernel of the killed semigroup is dominated by the global heat kernel, that is,
\[0\le p_t^{U_j}(x,y)\le p_t^{(j)}(x,y).\]
we have
\[\|P_t^{U_j}\|_{L^1(U_j,\meas_{j})\to L^\infty(U_j,\meas_{j})}	\le	\esssup_{x,y\in U_j}	p_t^{U_j}(x,y)	\le Ct^{-s/2},	\]
which proves \eqref{e.finite-ultra}.
\end{proof}

The following Sobolev estimates are standard consequences of ultracontractivity, through the Hardy--Littlewood--Sobolev theory for symmetric sub-Markovian semigroups developed by Varopoulos \cite{Var85}; see also \cite[Chapter~II]{VSC92}. For the low-dimensional case, we use the Gagliardo--Nirenberg inequality of Coulhon \cite[Th\'eor\`eme~1]{Cou92}.
 
\begin{lemma}\label{l.truncated-sob}
Fix $s\in(d_{\mathrm s},\infty)$. Let $(\form^{(j),U_j},\domain_{j,0}(U_j))$ be the part Dirichlet forms from Lemma~\ref{l.local-killed}. Write $\form_1^{(j)}(\cdot):=\form^{(j)}(\cdot) +\|\cdot\|_{L^2(\ambient,\meas_{j})}^2$.
	
\begin{enumerate}[label=\textup{(\arabic*)},leftmargin=*]

\item\label{it.sob-infty} 
If $s<2$, then
\begin{equation}\label{e.truncated-infty}
	\|v\|_{L^\infty(U_j,\meas_{j})}	\le C_s\form_1^{(j)}(v)^{1/2},		\quad\text{ for all } v\in\domain_{j,0}(U_j).
\end{equation}
	
\item\label{it.sob-q} 
If $s>2$, then
\begin{equation}\label{e.truncated-q}
	\|v\|_{L^{\frac{2s}{s-2}}(U_j,\meas_{j})}\le C_s\form_1^{(j)}(v)^{1/2},\quad\text{ for all } v\in\domain_{j,0}(U_j).
\end{equation}
\end{enumerate}
In both cases, the constant $C_s$ is independent of $j$, for all sufficiently large $j$.
\end{lemma}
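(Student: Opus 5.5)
The plan is to derive both inequalities from the uniform ultracontractivity bound \eqref{e.finite-ultra} of Lemma~\ref{l.local-killed}. This bound holds only for $t\in(0,1]$, so the first step is to upgrade it to a global-in-time estimate by passing to a shifted semigroup. Set $T_t:=e^{-t}P_t^{U_j}$, which is the symmetric sub-Markovian semigroup on $L^2(U_j,\meas_j)$ associated with the form $\form^{(j)}(v)+\|v\|_{L^2}^2$ on $\domain_{j,0}(U_j)$. For $t\le1$, \eqref{e.finite-ultra} gives $\|T_t\|_{1\to\infty}\le Ct^{-s/2}$. For $t>1$, write $T_t=e^{-(t-1)}P^{U_j}_{t-1}T_1$; since $P^{U_j}_{t-1}$ is an $L^\infty$-contraction, $\|T_t\|_{1\to\infty}\le Ce^{-(t-1)}\le C't^{-s/2}e^{-t/2}$. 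Hence, for all $j\ge j_0$ and all $t>0$,
\[
\|T_t\|_{L^1\to L^\infty}\le C\,t^{-s/2}e^{-ct},
\]
with $C,c$ independent of $j$.

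When $s>2$, I would invoke Varopoulos' theorem \cite{Var85} (see \cite[Theorem~II.2.4]{VSC92}). It states that for a symmetric sub-Markovian semigroup with $\|T_t\|_{1\to\infty}\le Ct^{-s/2}$ for all $t>0$ and $s>2$, the generator $A$ satisfies $\|f\|_{2s/(s-2)}\le C'\|A^{1/2}f\|_2$, where $C'$ depends only on $C$ and $s$. Applying this to $T_t$, whose form is $\form_1^{(j)}$ restricted to $\domain_{j,0}(U_j)$, yields \eqref{e.truncated-q} with a constant uniform in $j$. Functions in $\domain_{j,0}(U_j)$ vanish q.e.\ off $U_j$, so their $L^q$ norms on $U_j$ and on $\ambient$ agree.

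When $s<2$, the cleanest route avoids Coulhon's interpolation. For $v=(1-\Delta^{U_j})^{-1/2}f$ with $f\in L^2$, write $(1-\Delta^{U_j})^{-1/2}=\Gamma(1/2)^{-1}\int_0^\infty t^{-1/2}T_t\,\dif t$. Factoring $T_t=T_{t/2}T_{t/2}$ and using symmetry gives $\|T_t\|_{2\to\infty}=\|T_{t/2}\|_{1\to\infty}^{1/2}\le Ct^{-s/4}e^{-ct/2}$. Therefore
\[
\|v\|_\infty\le C\|f\|_2\int_0^\infty t^{-1/2-s/4}e^{-ct/2}\,\dif t,
\]
and the integral converges at $0$ exactly because $s<2$. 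Since $\|f\|_2^2=\form_1^{(j)}(v)$, this is \eqref{e.truncated-infty}. Alternatively, one can cite Coulhon's Gagliardo--Nirenberg inequality \cite[Th\'eor\`eme~1]{Cou92}, which gives the same conclusion.

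The main obstacle is the uniformity in $j$. This requires checking that the abstract constants depend only on $C$, $c$ and $s$ in the ultracontractivity bound. That bound is uniform for $j\ge j_0$ by Lemma~\ref{l.local-killed}, and the shift by $e^{-t}$ introduces no $j$-dependence. One must also note that $\domain_{j,0}(U_j)$ is the form domain of $-\Delta^{U_j}$, so that $\form_1^{(j)}(v)=\|(1-\Delta^{U_j})^{1/2}v\|_2^2$ for every $v$ in this domain.
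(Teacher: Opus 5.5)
Your proposal is correct and follows the paper's proof. Both pass to the shifted semigroup $e^{-t}P_t^{U_j}$ to upgrade Lemma~\ref{l.local-killed} to a global-in-time $L^1\to L^\infty$ bound uniform in $j$, and both apply Varopoulos' theorem when $s>2$. The only difference is the case $s<2$: the paper cites Coulhon's Gagliardo--Nirenberg inequality, whereas your subordination estimate for $(1-\Delta^{U_j})^{-1/2}=\pi^{-1/2}\int_0^\infty t^{-1/2}T_t\,\mathrm{d}t$ proves the bound directly and makes the $j$-independence of the constant explicit. Strictly, symmetry gives $\|T_{t/2}\|_{2\to\infty}=\|T_t\|_{1\to\infty}^{1/2}$ rather than the identity you wrote, but your stated bound still holds because $t\mapsto\|T_t\|_{1\to\infty}$ is nonincreasing.
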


\begin{proof}
Let $\gen_j^{U_j}$ denote the nonpositive self-adjoint generator of the part Dirichlet form corresponding to $\form^{(j)}$ on $U_j$, and set $A_j=I-\gen_j^{U_j}$. Thus $A_j$ is positive self-adjoint and
\[e^{-tA_j}=e^{-t}P_t^{U_j},\qquad t\in(0,\infty).\]
By Lemma~\ref{l.local-killed}, the semigroup property and $L^1$-contractivity,
\[\|P_t^{U_j}\|_{L^1(U_j,\meas_{j})\to L^\infty(U_j,\meas_{j})}
	\le C(t\wedge1)^{-s/2},\qquad t>0.\]
Consequently, 
\begin{equation}\label{e.sob-ultra}
\|e^{-tA_j}\|_{L^1(U_j,\meas_{j})\to L^\infty(U_j,\meas_{j})}		\le C_s t^{-s/2},\qquad t>0,
\end{equation}
with $C_s$ independent of $j$.
\begin{enumerate}[label=\textup{({\arabic*})}, align=right, leftmargin=*, topsep=5pt, parsep=0pt, itemsep=2pt]
\item[\ref{it.sob-infty}]
If $s<2$, Coulhon's Gagliardo--Nirenberg inequality \cite[Th\'eor\`eme~1]{Cou92}, applied with $\alpha=1$ and $p=2$, gives
\[\|v\|_\infty\le C_s\|v\|_2^{1-s/2}\|A_j^{1/2}v\|_2^{s/2}\le C_s\form_1^{(j)}(v)^{1/2}.\]
  
\item[\ref{it.sob-q}]	
If $s>2$, Varopoulos' Sobolev inequality \cite[Theorem, p.~259]{Var85} gives
\[\|v\|_{2s/(s-2)}\le C_s\|A_j^{1/2}v\|_2=C_s\form_1^{(j)}(v)^{1/2}.\]
\end{enumerate}

In both cases, the cited inequalities have constants depending only on $s$ and the ultracontractivity constant in \eqref{e.sob-ultra}, and are therefore uniform in $j$.
\end{proof}

\begin{proof}[Proof of Theorem~\ref{t.main}]
By Lemma~\ref{l.ds-lower}, $d_{\mathrm{s}}\in[1,\infty)$. The Dirichlet form is nonzero. Indeed, regularity, full support of $\meas$, and $\#\ambient\ge2$, together with$\PI{\Psi}$, imply that some $u\in\domain$ has positive energy. Hence $d_{\mathrm m}\ge1$ by \cite[Proposition~2.11]{Hin10}. Thus it remains to prove $d_{\mathrm m}\le d_{\mathrm s}$.
	
Fix an integer $n\le d_{\mathrm m}$ and $s\in(d_{\mathrm s},\infty)$. The choices of $n$ and $s$ will be specified in the two cases below. By Proposition~\ref{p.finite-normalization}, there are a point $p\in \ambient$, scales $r_j\downarrow0$, rescaled Dirichlet forms $(\form^{(j)},\domain_j)$, and functions $\mathbf H_j=(H_j^1,\ldots,H_j^n) \in (C(\ambient) \cap \domain_{\loc}(\ambient))^n$ such that, on $U_j=B_j(p,R_*)$,
\begin{equation}\label{e.main-matrix}
	\Gamma^{(j)}\langle H_j^i,H_j^k\rangle=\delta_{ik}\nu_j+R_j^{ik},
\end{equation}
where
\begin{equation}\label{e.main-matrix-error}
	\varepsilon_{j}:=\sum_{i,k=1}^n\|R_j^{ik}\|_{\mathrm{TV}(U_j)}\to 0, \ \text{ and }\ \nu_j(B_j(p,1))=1.
\end{equation}
Fix $R_*=8(1+\Lambda_{\mathrm{PI}})$. By Lemma~\ref{l.uniform-cutoff}-\ref{it.cutoff1}, we may choose $\zeta_j\in\domain_j\cap C(\ambient)$ satisfying
\[0\le\zeta_j\le1,\quad\zeta_j=1\quad\text{on } B_j(p,R_*/4),\quad \zeta_j=0\quad\text{on }\ambient\setminus B_j(p,R_*/2),\]
and
\begin{equation}\label{e.main-cutoff-bound}
\sup_{j\in\bN}\left(\form^{(j)}(\zeta_j)+\|\zeta_j\|_{L^2(\ambient, \meas_{j})}^2\right)<\infty.
\end{equation}
	
 Define
\[	\kappa_j:=(\mathbf H_j)_*(\zeta_j^2\nu_j),\ 
	\sigma_j:=(\mathbf H_j)_*\bigl(\one_{B_j(p,2)}\meas_{j}\bigr)\ \text{ and }\ 
 	\lambda_j:=(\mathbf H_j)_*\left(2\Gamma^{(j)}(\zeta_j)+\zeta_j^2\meas_{j}\right).\]
By Lemma~\ref{l.euclidean-image}-\ref{it.image1} and \ref{it.image2}, after passing to a subsequence, $\kappa_j\rightharpoonup\kappa=\rho\,\mathcal L^n$ and $\lambda_j\rightharpoonup\lambda$, where $\lambda$ is a finite positive measure and
\begin{equation}\label{e.main-rho}
	\sqrt\rho\in H^1(\mathbb R^n).
\end{equation}
Moreover, since $\zeta_j=1$ on $B_j(p,1)$ and $\nu_j(B_j(p,1))=1$, the measure $\kappa$ is nonzero.
	
By Lemma~\ref{l.image-poincare}-\ref{it.image-poincare1} and \ref{it.image-poincare2}, after passing to a further subsequence, $\sigma_j\rightharpoonup\sigma$, 	where $\sigma$ is nonzero and is not a point mass, and 
\begin{equation}\label{e.main-nonpolar}
	\text{ if $n\geq2$ and $N$ is a Borel set of $\mathbb R^n$ with } \Capacity^{1,2}(N)=0,\ \text{ then }\ \sigma(N)=0.
\end{equation}
	
We record the energy estimate that will be used in both cases. For $u\in C_c^\infty(\mathbb R^n)$, the product rule, the energy-measure Cauchy--Schwarz inequality, and the chain rule, give
\begin{align}
\form_1^{(j)}(\zeta_j u(\mathbf H_j))&=\form^{(j)}(\zeta_j u(\mathbf H_j))+\|v_j\|_{L^2(\ambient,\meas_{j})}^2\\
&\leq  2\int_\ambient u(\mathbf H_j)^2 \dif \Gamma^{(j)}\langle\zeta_j\rangle + 2\int_\ambient\zeta_j^2 \dif \Gamma^{(j)}\langle u(\mathbf H_j)\rangle+\int_\ambient\zeta_j^2u(\mathbf H_j)^2\dif \meas_{j}\\
&\overset{\eqref{e.main-matrix}}{\le}2\int_{\mathbb R^n}|\nabla u|^2\dif \kappa_j +\int_{\mathbb R^n}u^2\dif \lambda_j+2\varepsilon_{j}\|\nabla u\|_{\sup}^2.\label{e.main-energy-estimate}
\end{align}
Passing to the limit, we obtain
\begin{equation}\label{e.main-energy-limit}
	\limsup_{j\to\infty}\form_1^{(j)}\bigl(\zeta_j u(\mathbf H_j)\bigr) \overset{\eqref{e.main-matrix-error},\eqref{e.main-energy-estimate}}{\le}2\int_{\mathbb R^n}|\nabla u|^2\dif \kappa+\int_{\mathbb R^n}u^2\dif \lambda,\ \text{ for all } u\in C_c^\infty(\mathbb R^n).
\end{equation}

 \begin{enumerate}[label=\textit{Case \arabic*.}, wide=0pt, topsep=5pt, parsep=0pt, itemsep=2pt]
 	
\item \textit{Suppose that $d_{\mathrm s}<2$.}
  
 Suppose, toward a contradiction, that $d_{\mathrm m}\ge2$. Choose $n=2$ and $s\in(d_{\mathrm{s}},2)$. Define $\kappa_{0,j}:=(\mathbf H_j)_*\bigl(\one_{B_j(p,1)}\nu_j\bigr)$. Then $\{\kappa_{0,j}\}_{j\in\bN}$ have mass one and uniformly compact support in $\bR^{2}$. Passing to a subsequence, there exists a measure $\kappa_{0}$ such that $\kappa_{0,j}\rightharpoonup\kappa_0$ and $\kappa_0(\mathbb R^2)=1$. Since $\zeta_j=1$ on $B_j(p,1)$, we know that 	
\begin{equation}\label{e.main-kappa0}
		0\ne\kappa_0\le\kappa=\rho\,\mathcal L^2.
\end{equation}
In particular, $\kappa_0$ has no atoms since $\kappa$ is non-atomic. By the Lebesgue differentiation theorem, there exists $a\in\supp\kappa_0$ such that
\begin{equation}\label{e.main-point2}
	\sup_{0<r\le1}r^{-2}\kappa(B(a,r))<\infty,\ \text{ and }\ \lambda(\{a\})=0.
\end{equation}
Since $a\in\supp\kappa_0$,
\begin{equation}\label{e.main-support}
	\kappa_0(B(a,r))>0
	\quad\text{for every }r\in(0,\infty).
\end{equation}
	
Choose $\eta\in C^\infty(\mathbb R)$ with $0\le\eta\le1$, $\eta(t)=1$ for $t\le0$, and $\eta(t)=0$ for $t\ge3/4$. For $\epsilon\in(0,1)$, put $r_\epsilon^-:=e^{-2/\epsilon}$ and $r_\epsilon^+:=e^{-1/\epsilon}$. Define a function $\phi_\epsilon(y):\bR^{2}\to[0,1]$ by 
\[\phi_\epsilon(y):=\eta\left(\frac{\log(|y-a|)-\log r_\epsilon^-}{\log r_\epsilon^{+}-\log r_\epsilon^-}\right),\ \text{ for }\ y\ne a,\ \text{ and }\ \phi_\epsilon(a):=1.\]
Then $\phi_\epsilon\in C_c^\infty(\mathbb R^2)$, 
\[0\le\phi_\epsilon\le1,\quad \phi_\epsilon=1\ \text{on }B(a,r_\epsilon^-),\quad \supp\phi_\epsilon \subset B(a,r_\epsilon^+),\]
and, by the chain rule, there exists $C\in(1,\infty)$ such that
 	
\begin{equation}\label{e.main-grad<}
	|\nabla\phi_\epsilon(y)|\leq \frac{\|\eta'\|_{\sup}}{|y-a|\log(r_\epsilon^+/r_\epsilon^-)} \le\frac{C\epsilon}{|y-a|},\ \text{ whenever }\ r_\epsilon^-<|y-a|<r_\epsilon^+.
\end{equation}
where the last inequality follows from $\log(r_\epsilon^+/r_\epsilon^-)\gtrsim\epsilon^{-1}$.	By a dyadic annulus decomposition,
\begin{align}
	\int_{\mathbb R^2}|\nabla\phi_\epsilon|^2\dif \kappa
	&\overset{\eqref{e.main-grad<}}{\le} C\epsilon^2\sum_{k=0}^{\lceil\log_2({r_\epsilon^+}/{r_\epsilon^-})\rceil}\frac{\kappa(B(a,2^{k+1}r_\epsilon^-))}{(2^kr_\epsilon^-)^2}\\
	&\overset{\eqref{e.main-point2}}{\le} C_{1}\epsilon^2\left(\left\lceil\log_2\frac{r_\epsilon^+}{r_\epsilon^-}\right\rceil+1\right)
	\le C_{2}\epsilon.\label{e.main-gd1}
\end{align}	
Moreover, by our choice of $a$,
\begin{equation}\label{e.main-gd2}
	\limsup_{\epsilon\downarrow0}\int_{\mathbb R^2}\phi_\epsilon^2\dif \lambda\le\limsup_{\epsilon\downarrow0}\lambda(B(a,r_\epsilon^+))=\lambda(\{a\})\overset{\eqref{e.main-point2}}{=}0.
\end{equation}
Therefore,
\begin{align}
&\phantom{\ \leq}\limsup_{\epsilon\downarrow0}\left(\limsup_{j\to\infty} \form_1^{(j)}(\zeta_j\phi_\epsilon(\mathbf H_j)) \right)\\
&\overset{\eqref{e.main-energy-limit}}{\le} \limsup_{\epsilon\downarrow0}\left(2\int_{\mathbb R^2}|\nabla\phi_\epsilon|^2\dif \kappa + \int_{\mathbb R^2}\phi_\epsilon^2\dif \lambda\right)\overset{\eqref{e.main-gd1},\eqref{e.main-gd2}}{=}0.\label{e.main-energy-zero}
\end{align}
We now obtain a contradiction from \eqref{e.main-energy-zero}. Fix $\epsilon>0$. The Portmanteau theorem and $\kappa_{0,j}\rightharpoonup\kappa_0$ imply
\[\liminf_{j\to\infty} \kappa_{0,j}(B(a,r_\epsilon^-/2))\ge\kappa_0(B(a,r_\epsilon^-/2))\overset{\eqref{e.main-support}}{>}0.\]
Recalling that $\kappa_{0,j}=(\mathbf H_j)_*(\one_{B_j(p,1)}\nu_j)$, we conclude that, for all sufficiently large $j$, there exists $x_j\in B_j(p,1)$ such that
\[|\mathbf H_j(x_j)-a|<r_\epsilon^-/2.\]
The uniform H\"older continuity of $\mathbf H_j$ implies that there is $c_\epsilon>0$, independent of $j$, such that
\[B_j(x_j,c_\epsilon)\subset B_j(p,R_*/4), \ \text{ and }\ |\mathbf H_j(y)-a|<r_\epsilon^-\quad\text{for all }y\in B_j(x_j,c_\epsilon).\]
Therefore, $\zeta_j\phi_\epsilon(\mathbf H_j)=1$ $\meas_{j}$-a.e. on $B_j(x_j,c_\epsilon)$. Since $\meas_{j}$ has full support, we thus have $\|\zeta_j\phi_\epsilon(\mathbf H_j)\|_{L^{\infty}(U_{j},\meas_{j})}=1$. Since $\zeta_j\phi_\epsilon(\mathbf H_j)\in \domain_{j,0}(U_j)$, by the Sobolev inequality \eqref{e.truncated-infty} in Lemma~\ref{l.truncated-sob}-\ref{it.sob-infty}, we have	
\[\form_1^{(j)}(\zeta_j\phi_\epsilon(\mathbf H_j))\ge C_s^{-2},\ \text{ for all sufficiently large $j$}.	\]
The lower bound is independent of $\epsilon$, contradicting \eqref{e.main-energy-zero}.
	
Hence $d_{\mathrm m}<2$. Since the martingale dimension is a positive integer or infinity, we know that 
\begin{equation}\label{e.maincase1}
d_{\mathrm m}=1	\quad\text{when } d_{\mathrm s}<2.
\end{equation}

\item \textit{Suppose that $d_{\mathrm s}\geq2$.}
	
Suppose, toward a contradiction, that $d_{\mathrm m}>d_{\mathrm s}$. Choose an integer $n$ and a real number $s$ such that $d_{\mathrm s}<s<n\le d_{\mathrm m}$.	In particular, $n\ge3$ and $s>2$. With these choices of $n$ and $s$, we apply Proposition~\ref{p.finite-normalization}, retaining the notation $\mathbf H_j$, $\kappa$, $\lambda$, and $\sigma$. Let $q=\frac{2s}{s-2}$.	The uniform Sobolev inequality \eqref{e.truncated-q} gives
\begin{equation}\label{e.main-truncated-q}
	\|v\|_{L^q(U_j,\meas_{j})}^2\le C_s\form_1^{(j)}(v),	\qquad v\in\domain_{j,0}(U_j).
\end{equation}
	
Fix $u\in C_c^\infty(\mathbb R^n)$ and set $v_j=\zeta_j u(\mathbf H_j)$. Since $\zeta_j=1$ on $B_j(p,2)$,
	\[\left(\int_{\mathbb R^n}|u|^q\dif \sigma_j\right)^{2/q}=\left(\int_{B_{j}(p,2)}|u(\mathbf{H}_{j})|^q\dif\meas_{j}\right)^{2/q}\le\|v_j\|_{L^q(U_j,\meas_{j})}^2\overset{\eqref{e.truncated-q}}{\leq}C_s\form_1^{(j)}(v_j).\]
Using $\sigma_j\rightharpoonup\sigma$ and \eqref{e.main-energy-limit}, we obtain
\begin{equation}\label{e.image-sob}
	\left(\int_{\mathbb R^n}|u|^q\dif \sigma\right)^{2/q}\overset{\eqref{e.main-energy-limit}}{\le}C\left(\int_{\mathbb R^n}|\nabla u|^2\dif \kappa+\int_{\mathbb R^n}u^2\dif \lambda\right),\ \text{for every $u\in C_c^\infty(\mathbb R^n)$}.
\end{equation}
By \eqref{e.main-rho} and Proposition~\ref{p.CapE}-\ref{it.Cap2}, there is a Borel set $N\subset\mathbb R^n$ with $\Capacity^{1,2}(N)=0$, such that, 
\[\sup_{r\in(0,1]}r^{-n}\kappa(B(a,r))<\infty,\ \text{ and }\  \lim_{r\downarrow0} r^{2-n}\lambda(B(a,r))=0,\ \text{ for every $a\in\bR^{n}\setminus N$}.\]
By \eqref{e.main-nonpolar}, $\sigma(N)=0$. Proposition~\ref{p.CapE}-\ref{it.Cap3} also gives
	\[ \liminf_{r\downarrow0}r^{-n}\sigma(B(a,r))>0,\ \text{ for $\sigma$-a.e. }a\in\bR^{n}.\]
Since $\sigma$ is nonzero, we can choose $a\in\mathbb R^n$ satisfying all three properties. In particular, there exist	$C_a,c_a,r_a>0$ such that
\begin{align}
&\kappa(B(a,r))\le C_ar^n,\ \text{ for all }\ r\in(0,1],\label{e.main-point-a1}\\
&\lambda(B(a,r))=o(r^{n-2}),\ \text{ as }\ r\downarrow0, \label{e.main-point-a2}\\
&\sigma(B(a,r))\ge c_ar^n,\ \text{ for all }\ r\in (0,r_a]. \label{e.main-point-a3}
\end{align}	

Choose $\chi\in C_c^\infty(\mathbb R^n)$ such that $0\le\chi\le1$, $\chi=1$ on $B(0,1)$, and $\supp\chi\subset B(0,2)$. For all $\epsilon\in(0,\infty)$, let
	\[u_\epsilon(y):=\epsilon^{1-n/2}\chi\left(\frac{y-a}{\epsilon}\right),\ y\in\bR^{n}.\]
By \eqref{e.main-point-a1} and \eqref{e.main-point-a2}, we know that 
\begin{equation}
\int_{\mathbb R^n} |\nabla u_\epsilon|^2\dif\kappa+\int_{\mathbb R^n}u_\epsilon^2\dif \lambda\leq C\epsilon^{-n}\kappa(B(a,2\epsilon))+C\epsilon^{2-n}\lambda(B(a,2\epsilon))\leq C_{1}
\end{equation}
for some constant $C_{1}\in(0,\infty)$ independent of $\epsilon$.

On the other hand, since $u_\epsilon=\epsilon^{1-n/2}$ on $B(a,\epsilon)$, the property \eqref{e.main-point-a3} yields
\begin{equation}
\left(\int_{\mathbb R^n}|u_\epsilon|^q\dif \sigma \right)^{2/q}\geq	\epsilon^{2-n}\sigma(B(a,\epsilon))^{2/q} \overset{\eqref{e.main-point-a3}}{\ge}	c\epsilon^{2-n+(2n/q)} =c\epsilon^{2(s-n)/s}.
\end{equation}
Since $s<n$, the exponent $2(s-n)/s<0$. Thus the left-hand side of \eqref{e.image-sob} diverges as $\epsilon\downarrow0$, a contradiction. Therefore,
\begin{equation}\label{e.maincase2}
	d_{\mathrm m}\le d_{\mathrm s}\quad\text{when }d_{\mathrm s}\ge2
\end{equation}
\end{enumerate}

Combining \eqref{e.maincase1} and \eqref{e.maincase2}, with the lower bound $d_{\mathrm{s}}\geq1$ from Lemma~\ref{l.ds-lower}, we obtain \eqref{e.main-statement}.
\end{proof}
 
\medskip

\noindent\textbf{Statement of AI use.} 
The mathematical ideas and proof strategies were developed by the authors. ChatGPT was used for proofreading, language editing, and assistance in refining certain technical details based on arguments supplied by the authors. The authors have verified the proofs and take full responsibility for their correctness.

\bigskip
\noindent\textsc{Aobo Chen}\\
Department of Mathematics, Aarhus University, 8000 Aarhus C, Denmark\\
\textit{Email:} \href{mailto:aobochen@math.au.dk}{\texttt{aobochen@math.au.dk}}

\medskip
\noindent\textsc{Mathav Murugan}\\
Department of Mathematics, University of British Columbia, Vancouver, BC V6T 1Z2, Canada\\
\textit{Email:} \href{mailto:mathav@math.ubc.ca}{\texttt{mathav@math.ubc.ca}}
\end{document}